\documentclass[11pt,reqno]{amsart}
\usepackage[T1]{fontenc}
\usepackage[utf8]{inputenc}
\usepackage{lmodern}
\usepackage{amsmath,amssymb,amsthm,mathtools,mathrsfs}
\usepackage[margin=1.02in]{geometry}
\usepackage[expansion=false]{microtype}
\usepackage{enumitem}
\usepackage[hidelinks,unicode]{hyperref}
\allowdisplaybreaks[1]
\numberwithin{equation}{section}
\makeatletter
\renewcommand{\andify}{%
	\nxandlist{\unskip, }{\unskip{} \@@and~}{\unskip{} \@@and~}}
\renewcommand{\author@andify}{%
	\nxandlist{\unskip,\penalty-1 \space\ignorespaces}%
	{\unskip{} \@@and~}{\unskip{}\penalty-2 \space \@@and~}}
\makeatother
\newtheorem{theorem}{Theorem}[section]
\newtheorem{proposition}[theorem]{Proposition}
\newtheorem{lemma}[theorem]{Lemma}
\newtheorem{corollary}[theorem]{Corollary}
\theoremstyle{definition}
\newtheorem{definition}[theorem]{Definition}
\theoremstyle{remark}
\newtheorem{remark}[theorem]{Remark}
\newcommand{\C}{\mathbb C}
\newcommand{\Pone}{\mathbb P^1}
\newcommand{\dbar}{\bar\partial}
\newcommand{\id}{\operatorname{Id}}
\DeclareMathOperator{\Hom}{Hom}
\newcommand{\norm}[1]{\left\lVert#1\right\rVert}
\newcommand{\doi}[1]{\href{https://doi.org/#1}{\nolinkurl{doi:#1}}}
\title[Oka-1 approximation with rationally connected fibers]{Analytic and
	Algebraic Oka-1 Approximation for Smooth Projective Morphisms with
	Rationally Connected Fibers}
\author{Yun-Heng Du}
\address{Academy of Mathematics and Systems Science,
	Chinese Academy of Sciences, Beijing 100190, China}
\email{duyunheng@amss.ac.cn}

\author{Bin Guo}
\address{Academy of Mathematics and Systems Science,
	Chinese Academy of Sciences, Beijing 100190, China}
\email{guobin181@mails.ucas.ac.cn}

\author{Song-Yan Xie}
\address{State Key Laboratory of Mathematical Sciences,
	Academy of Mathematics and Systems Science, Chinese Academy of Sciences,
	Beijing 100190, China; School of Mathematical Sciences, University of
	Chinese Academy of Sciences, Beijing 100049, China}
\email{xiesongyan@amss.ac.cn}
\date{}

\subjclass[2020]{32Q56, 32E30, 14M22, 14D06, 14H60}
\keywords{Oka-1 map, algebraic Oka-1 property, rationally connected
	variety, smooth projective morphism, holomorphic approximation, algebraic
	approximation, jet interpolation, Riemann surface}

\hypersetup{
	pdftitle={Analytic and Algebraic Oka-1 Approximation for Smooth Projective Morphisms with Rationally Connected Fibers},
	pdfauthor={Yun-Heng Du, Bin Guo, Song-Yan Xie}
}

\begin{document}
	\begin{abstract}
		Let $\pi:Z\to Y$ be a smooth projective morphism of complex manifolds
		with connected rationally connected fibers. We prove holomorphic
		approximation on arbitrary compact sets and finite-jet interpolation on
		arbitrary closed discrete sets for continuous liftings defined on open Riemann
		surfaces and holomorphic near those sets.
		For smooth projective morphisms of smooth complex algebraic varieties
		and algebraic base maps from smooth affine curves, the approximating
		liftings can be chosen algebraic, with interpolation on any finite set.
		In both cases the resulting lifting is homotopic to the initial one
		through continuous liftings of the fixed base map.
		For connected smooth projective complex manifolds, this gives the
		equivalence between the algebraic Oka-1 property and rational
		connectedness.  Every rationally connected smooth projective complex
		manifold is also Oka-1.
	\end{abstract}
	\maketitle
	
	\section{Introduction}\label{sec:introduction}
	We study approximation and finite-jet interpolation for liftings of maps
	from complex curves through smooth projective morphisms.  Rational
	connectedness of the fibers suffices for a holomorphic endpoint on an open
	Riemann surface and, when the source and morphisms are algebraic, for an
	algebraic endpoint on a smooth affine curve.  The resulting lifting is homotopic to the given one through
	continuous liftings of the fixed base map.  The homotopy is obtained
	by a topological argument on the open source.  The algebraic construction
	also uses a cutoff homotopy on the projective completion of that source.
	
	Alarc\'on--Forstneri\v c conjectured that every rationally connected smooth
	projective manifold is Oka-1 \cite[Conjecture~9.1]{AF25}.
	Forstneri\v c--L\'arusson introduced the algebraic Oka-1 property and proved
	that rational connectedness is necessary in the smooth projective case
	\cite[Definition~1.5 and Proposition~1.7]{FL25}.  We prove the relative
	analytic and algebraic approximation statements below.  Specializing Theorems~\ref{thm:rel-main} and~\ref{thm:relative-algebraic-oka1}
	to $Y=\{\mathrm{pt}\}$ settles the analytic conjecture and proves
	that rational connectedness also suffices for the algebraic Oka-1 property.
	
	Forstneri\v c--L\'arusson proved that compact rational manifolds and
	algebraically elliptic projective manifolds are algebraically Oka-1
	\cite[Corollary~3.5 and Theorem~4.1]{FL25}. The algebraic Oka-1 property
	implies the Oka-1 property \cite[Proposition~1.9]{FL25}.
	Our analytic argument isolates the first-jet sphere families supplied by
	rational connectedness; the algebraic argument uses the same section
	equation after a separate construction of a bundle with vanishing first
	cohomology.
	
	\subsection{Conventions and lifting properties}
	Throughout this paper, open Riemann surfaces are assumed to be connected.
	By a \emph{compact bordered Riemann surface} we mean the closure of a
	relatively compact smoothly bounded domain in an open Riemann surface. A map is holomorphic near a compact set if it is
	holomorphic on an open neighborhood of that set.  We call a compact subset $K$ of an open Riemann surface $R$ \emph{Runge} if $R\setminus K$
	has no relatively compact connected component.
	The term \emph{closed discrete} means that $A\subset R$ is closed and
	every point of $R$ has a neighborhood meeting $A$ in finitely many points.
	
	If two holomorphic maps $f$ and $F$ are defined near $a\in R$ and have the
	same value at $a$, choose a source coordinate $z$ with $z(a)=0$ and a
	holomorphic target chart $\phi$ at $f(a)=F(a)$.  We write $j_a^kF=j_a^kf$ if
	\[
	\left.\frac{d^j}{dz^j}(\phi\circ F-\phi\circ f)\right|_{z=0}=0
	\qquad(0\leq j\leq k).
	\]
	Equivalently, each component of $\phi\circ F-\phi\circ f$ is divisible
	by $z^{k+1}$.  The jet-equality condition is independent of the coordinates.  A jet of order $k$ is equivalently
	a restriction to the infinitesimal neighborhood of length $k+1$; in particular, a
	first jet corresponds to the divisor $2[a]$.  For a closed discrete set $A$, an \emph{order function} is a map
	$q:A\to\mathbb N$, where $\mathbb N$ denotes the set of nonnegative integers.  Fix a smooth Hermitian metric
	on every complex manifold used as a target and denote the associated
	distance by $d$ with a subscript when needed.  All algebraic varieties
	are over $\C$, unless another ground field is specified.  Distances and holomorphic jets of algebraic maps are
	understood on their analytifications.  Unless otherwise stated, a smooth algebraic curve is
	connected, and hence integral.
	
	Rational connectedness of a connected smooth projective manifold $X$
	means that there is a nonempty Zariski-open subset $U\subset X$ such that
	any $x,y\in U$ lie on the image of some morphism $h:\Pone\to X$.
	
	\begin{definition}
		\label{def:rel-smooth-projective}
		A holomorphic map $\pi:Z\to Y$ between complex manifolds is called
		\emph{smooth projective} if $\pi$ is a holomorphic submersion and every point
		of $Y$ has an open neighborhood $U$ for which there exist an integer $N$ and
		a closed holomorphic embedding
		\[
		Z|_U\hookrightarrow U\times\mathbb P^N
		\]
		over $U$, where $Z|_U=\pi^{-1}(U)$.  Thus projectivity in this analytic definition is local on the
		base; no global relatively ample line bundle is chosen.  In particular,
		$\pi$ is proper and every nonempty fiber $Z_y$ is a smooth projective
		manifold.  Throughout, a rationally connected fiber is understood to be
		nonempty, so the morphisms in the main theorems are surjective.
	\end{definition}
	
	For a continuous map $q:E\to B$, the \emph{Serre fibration} condition
	can be expressed as follows for every CW complex $T$.  Given continuous maps
	$a:T\to E$ and $H:T\times[0,1]\to B$ with $H(t,0)=q(a(t))$, there is a
	continuous map $\widetilde H:T\times[0,1]\to E$ satisfying
	\[
	q\circ\widetilde H=H,\qquad \widetilde H(t,0)=a(t).
	\]
	
	The following lifting property is the Oka-1 map condition
	of~\cite[Definition~7.7]{AF25}.
	
	\begin{definition}
		\label{def:rel-oka1-map}
		We call a holomorphic map $\pi:Z\to Y$ an \emph{Oka-1 map} if it is a
		Serre fibration with the following lifting property.  Let $R$ be an open
		Riemann surface, let $K\subset R$ be compact and Runge, let $A\subset K$ be
		finite, and let $g:R\to Y$ be holomorphic.  Suppose that $f:R\to Z$ is a
		continuous lifting of $g$, meaning $\pi\circ f=g$, and is holomorphic
		on a neighborhood of $K$.
		Given $\epsilon>0$ and an integer $k_a\geq0$ for every $a\in A$, there
		are a holomorphic lifting $F:R\to Z$ of $g$ and a homotopy from $f$ to $F$
		through continuous liftings $f_t$ of $g$, with $t\in[0,1]$,
		$\pi\circ f_t=g$, $f_0=f$, and $f_1=F$, such that
		\[
		\sup_{z\in K}d_Z(F(z),f(z))<\epsilon,
		\qquad j_a^{k_a}F=j_a^{k_a}f\quad(a\in A).
		\]
	\end{definition}
	
	For a point base, this is the Oka-1 property of the target manifold
	\cite[Proposition~2.7 and Definition~7.7]{AF25}.
	
	Definition~7.7 of~\cite{AF25} requires interpolation only at finitely many
	points of $K$.  That definition does not require the intermediate maps in the homotopy
	to preserve the prescribed jets.  Theorem~\ref{thm:rel-analytic-criterion} gives a stronger endpoint
	interpolation statement on arbitrary closed discrete subsets, together with
	approximation on arbitrary compact sets.
	
	\subsection{The analytic criterion and rationally connected fibers}
	
	Fix an affine coordinate $w$ centered at $0\in\Pone$. The following
	property is the geometric input to the analytic construction.
	\begin{definition}
		\label{def:rel-first-jet-spheres}
		For a holomorphic submersion $p:X\to S$ of complex manifolds, the \emph{relative first-jet sphere-family property} is the following requirement: for every $(x,b)\in T_{X/S}$, there are an open neighborhood $\Omega\subset T_{X/S}$ of $(x,b)$ and a jointly holomorphic map
		\[
		G:\Pone\times\Omega\longrightarrow X
		\]
		such that, for every $(x',b')\in\Omega$,
		\begin{equation}\label{eq:rel-sphere-identities}
			p(G(w;x',b'))=p(x'),\qquad
			G(0;x',b')=x',\qquad \partial_wG(0;x',b')=b'.
		\end{equation}
		Here $T_{X/S}=\ker dp$ is the vertical tangent bundle.  The requirement includes $b=0$; a realizing sphere need not be constant when its derivative at $0$ vanishes.
	\end{definition}
	
	\begin{theorem}
		\label{thm:rel-analytic-criterion}
		Let $\pi:Z\to Y$ be a surjective proper holomorphic submersion of complex
		manifolds with connected and simply connected fibers.  Suppose that $\pi$ has
		the relative first-jet sphere-family property.  Then $\pi$ is an Oka-1 map.  If
		$R$ is an open Riemann surface, $K\subset R$ is compact, $A\subset R$ is closed
		and discrete, and $g:R\to Y$ is holomorphic, and if a continuous lifting
		$f:R\to Z$ of $g$ is holomorphic near $K\cup A$, then for every $\epsilon>0$
		and every order function $q:A\to\mathbb N$, the lifting $f$ is homotopic
		through continuous liftings of $g$ to a holomorphic lifting $F$ satisfying
		\[
		\sup_K d_Z(F,f)<\epsilon,
		\qquad j_a^{q(a)}F=j_a^{q(a)}f\quad(a\in A).
		\]
	\end{theorem}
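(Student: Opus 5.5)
We follow the standard Oka-theoretic scheme: build a holomorphic spray of liftings from the first-jet sphere families, use it to prove local approximation on compact bordered pieces, and then run an exhaustion/induction. The homotopy statement comes from topology.

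\emph{Step 1: dominating spray of liftings.} Let $f_0$ be a lifting that is holomorphic near a compact Runge set $L$. On a neighborhood $U$ of $L$ we form the holomorphic vector bundle $E=f_0^*T_{Z/Y}$. We will construct a holomorphic map $\Phi:E|_U\to Z$ with three properties: $\pi\circ\Phi(z,v)=g(z)$, $\Phi(z,0)=f_0(z)$, and $\partial_v\Phi(z,0)=\id$.

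The pointwise input is Definition~\ref{def:rel-first-jet-spheres}. Near each point $(f_0(z),0)\in T_{Z/Y}$ it gives $G(w;x,b)$ with $\partial_wG(0;x,b)=b$. Restricting to the zero section, the sphere $w\mapsto G(w;x,0)$ is tangent to zero at $x$. Setting
\[
\Phi_j(z,v)=G\bigl(1;f_0(z),\,\cdots\bigr)
\]
after rescaling $w$, locally $v\mapsto G(t;f_0(z),v/t)$ is only defined for small $v/t$. So we proceed differently, and compose sphere maps. Cover $U$ by finitely many sets $U_j$. On each $U_j$ the map $(z,b)\mapsto G(\lambda_j;f_0(z),b)$ has $\lambda_j$-derivative equal to $b$ at $\lambda_j=0$, and $b$ ranges over an open set of fiber directions. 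Choosing finitely many holomorphic sections $b_1,\dots,b_N$ of $E$ over $U$ that span each fiber (possible because $U$ is Stein), we define the iterated composition
\[
s(z,t_1,\dots,t_N)=G_{N}\bigl(t_N;\,\cdots G_1(t_1;f_0(z),b_1(z))\cdots\bigr).
\]
At $t=0$ its $t$-derivative is spanned by the $b_i$, so it is dominating. The obstacle is that each $G$ exists only locally in $(x,b)$. We handle this by working with small $t$ and, on Stein $U$, transporting $b_i(z)$ along the partial composition. Cartan's Theorem~B lets us glue the local choices into a global dominating spray over a neighborhood of $L$.

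\emph{Step 2: gluing and Runge approximation.} We use the standard method for Oka-1 manifolds and maps from \cite{AF25}, adapted to sprays of liftings, with two ingredients:
\begin{enumerate}[label=(\roman*)]
\item \emph{Cartan-pair gluing.} To extend across a critical point or a handle, use a Cartan pair $(L,B)$ with $B$ a disc. Over a disc, $g^*Z$ is trivialized topologically. A holomorphic lifting near an arc is supplied by the sphere families together with connectedness and simple connectedness of the fibers: the fibers have trivial $\pi_0$ and $\pi_1$, so continuous extensions exist across 1-cells, and they are Mergelyan-approximated within the spray.
\item \emph{Splitting lemma.} Glue the two sprays via the splitting lemma.
\end{enumerate}
Jet interpolation on $A$ is achieved by multiplying the spray parameters by functions vanishing to order $q(a)+1$ at the points of $A$. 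Domination in the spray lets us correct the jets by the implicit function theorem.

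\emph{Step 3: induction and homotopy.} Exhaust $R$ by Runge bordered surfaces $K_0\subset K_1\subset\cdots$ with $K\cup(\text{finitely many }A\text{-points})\subset K_0$. Noncritical steps use Step 2. At critical steps we attach an arc and continue $f$ across it; the continuation exists since fibers are connected. Choosing errors $\epsilon_n$ summable, the limit $F$ is holomorphic, is $\epsilon$-close to $f$ on $K$, and matches the jets on $A$.

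For the homotopy: $R$ has the homotopy type of a 1-dimensional CW complex. Liftings of $g$ are sections of $g^*Z\to R$, a fiber bundle whose fiber is connected and simply connected. By obstruction theory on a 1-complex, any two sections are homotopic through sections. The Serre fibration property holds because $\pi$ is a proper submersion, hence a locally trivial fibration by Ehresmann's theorem.

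\textbf{Main obstacle.} I expect Step 1 to be the hardest. The task is to globalize the local families $G$ into a fiber-dominating spray over a neighborhood of $L$ in a way that is compatible with approximation.
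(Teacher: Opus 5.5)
\textbf{There is a genuine gap, and it is not where you locate it.} In a Cartan-pair step, the splitting lemma only glues two holomorphic liftings that are already uniformly close on $L\cap B$. Its essential input is a holomorphic lifting over a neighborhood of the disc $B$ that approximates the given one on $L\cap B$. Producing that lifting is a one-dimensional convex approximation property for the fibers, which is the Oka-type content of the theorem. Nothing in your proposal derives it from the first-jet sphere families.

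The phrase ``Mergelyan-approximated within the spray'' only covers the critical step of attaching an arc. There, Mergelyan approximation on admissible sets holds for every target manifold, and the only input needed is the topological continuation.

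Conversely, Step~1 needs no hypothesis at all. Near a holomorphic section over a Stein neighborhood, a local fiber-dominating spray of sections exists for every holomorphic submersion. The graph is a Stein submanifold, so it has a Stein neighborhood by Siu's theorem. Flows of finitely many vertical holomorphic vector fields spanning $T_{X/R}$ on that neighborhood then give the spray.

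So the sphere families are never used essentially. The scheme as written would equally ``prove'' that a simply connected compact Brody-hyperbolic manifold is Oka-1, for example a very general surface of large degree in $\mathbb P^3$. That is false.

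\textbf{A second gap concerns the strengthened assertion.} Here $K$ is an arbitrary compact set, but your exhaustion starts with a Runge $K_0\supset K$ on which $f$ is merely continuous. The first step is then a non-Runge approximation problem. For instance, if near the unit circle $K\subset\C$ the lifting is given in a coordinate chart by $1/z$, the Cauchy integral theorem forces every approximant holomorphic on the disc to leave that chart. Runge-type Oka machinery cannot supply such approximants; they exist here only because rational curves can be inserted.

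\textbf{How the paper handles both points.} It avoids Cartan pairs altogether.
\begin{enumerate}
\item In Theorem~\ref{thm:rel-bordered}, a merely smooth section on a bordered surface, holomorphic only near $K\cup A$, is replaced on a Vitali packing of tiny disks by the rational disks $H_\sigma(z,q)=G(\sigma^2/z;z,m(q)+a(q)z,b(q))$. These reproduce the antiholomorphic linear term $b\bar z$ on $|z|=\sigma$.
\item Matching on annuli of width $\sigma^{3/2}$ confines both the residual and the linearization defect to a set of area $O(r^{1/2})$.
\item A contraction argument with a fixed right inverse of $\dbar$, on a bundle over a compact surface with vanishing $H^1$ (Lemma~\ref{lem:fixed-bundle-solution}), yields a holomorphic section with the prescribed jets.
\item The exhaustion in Proposition~\ref{prop:rel-open-lifting} then uses only the topological extension Lemma~\ref{lem:rel-top-extension}, which is where connectedness and simple connectivity of the fibers enter. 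No Runge condition is needed.
\end{enumerate}

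Your homotopy and Serre-fibration arguments are fine and agree with Lemma~\ref{lem:rel-section-homotopy}.
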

	
	For smooth projective morphisms, rational connectedness supplies the
	sphere families and simple connectivity of the fibers.
	
	\begin{theorem}\label{thm:rel-main}
		Let $\pi:Z\to Y$ be a smooth projective morphism whose fibers are connected
		and rationally connected. Then all the conclusions of
		Theorem~\ref{thm:rel-analytic-criterion} hold. In particular, $\pi$ is an
		Oka-1 map, with approximation on arbitrary compact sets and finite-jet
		interpolation on arbitrary closed discrete subsets of the source.
	\end{theorem}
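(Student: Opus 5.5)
Theorem~\ref{thm:rel-main} follows from Theorem~\ref{thm:rel-analytic-criterion} once its hypotheses are checked for a smooth projective morphism $\pi:Z\to Y$ with connected rationally connected fibers. By Definition~\ref{def:rel-smooth-projective}, $\pi$ is a proper holomorphic submersion. Since fibers are nonempty by convention, $\pi$ is surjective. The fibers are connected by assumption. Each fiber $Z_y$ is a rationally connected smooth projective manifold, hence simply connected (Campana; Koll\'ar--Miyaoka--Mori). So the only real work is the relative first-jet sphere-family property of Definition~\ref{def:rel-first-jet-spheres}.

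The plan is to build the sphere family from a single very free rational curve, deformed in the relative Hom scheme. Fix $(x,b)\in T_{Z/Y}$ with $y=\pi(x)$. Since $Z_y$ is rationally connected, it contains a very free curve through $x$. Composing with a suitable cover of $\Pone$ and a reparametrization, we aim for $h:\Pone\to Z_y$ with $h(0)=x$ and $h^*T_{Z_y}\cong\bigoplus\mathcal O(a_i)$, all $a_i\geq 2$; a degree-$m$ cover multiplies the $a_i$ by $m$. Then $H^1(\Pone,h^*T_{Z_y}(-2[0]))=0$, so the evaluation of first jets at $0$,
\[
H^0(\Pone,h^*T_{Z_y})\to h^*T_{Z_y}\otimes\mathcal O/\mathfrak m_0^2,
\]
is surjective. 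This jet evaluation is the differential of the map $\mathrm{Hom}(\Pone,Z_y)\to J^1_0$ given by $h'\mapsto(h'(0),\partial_wh'(0))$, whose target is the total space of $T_{Z_y}$.

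To realize the prescribed derivative $b$, including $b=0$, I would not insist that $h$ have derivative exactly $b$. Instead, first choose $h$ with $h(0)=x$ and $\partial_wh(0)=b_0\neq 0$. Then move inside the smooth family of deformations of $h$ fixing $0\mapsto x$. By the surjectivity above, the derivative-at-$0$ map of this family is a submersion onto a neighborhood of $b_0$ in $T_xZ_y$. Its image is open and stable under $w\mapsto\lambda w$, which scales the derivative, so it contains all of $T_xZ_y\setminus\{0\}$. To also cover $b=0$, replace $h$ by $h\circ\phi$ with $\phi$ a degree-two map fixing $0$ and ramified there. Then $h\circ\phi$ is still very free with all splitting degrees $\geq 2$, and its derivative at $0$ is $0$. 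Thus every $b$ is attained by some $h$ satisfying the $H^1$-vanishing.

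Relativizing is the main technical step, and I expect it to be the main obstacle. Locally on $Y$, embed $Z|_U\subset U\times\mathbb P^N$ and consider the relative space $\mathrm{Hom}_U(\Pone,Z|_U)$, a complex space over $U$ (Douady/relative Hom for proper maps). Deformation theory gives the following: at $[h]$, the map
\[
\Phi:\mathrm{Hom}_U(\Pone,Z|_U)\to J^1_0(T_{Z/Y})=T_{Z/Y},\qquad h'\mapsto (h'(0),\partial_wh'(0)),
\]
is a submersion, because $\pi$ is a submersion and $H^1(h^*T_{Z_y}(-2[0]))=0$, so the relative obstruction space vanishes. Hence $\mathrm{Hom}$ is smooth at $[h]$ and $\Phi$ is a holomorphic submersion there. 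A submersion admits a local holomorphic section $\sigma:\Omega\to\mathrm{Hom}_U$ through $[h]$ over a neighborhood $\Omega$ of $(x,b)$. Setting $G(w;x',b')=\sigma(x',b')(w)$ gives a jointly holomorphic map satisfying \eqref{eq:rel-sphere-identities}, since $\sigma$ lands in maps into the fiber over $\pi(x')$.

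Joint holomorphy comes from the universal evaluation map. The care needed is in working analytically rather than algebraically, since $\pi$ is only locally projective. I would handle this by using the Douady space of graphs in $\Pone\times Z|_U$ over $U$, whose tangent-obstruction theory at a graph is $H^0$ and $H^1$ of the normal bundle $h^*T_{Z_y}$ twisted appropriately. With the hypotheses verified, Theorem~\ref{thm:rel-analytic-criterion} yields all the conclusions.
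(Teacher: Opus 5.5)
Your overall reduction matches the paper. The paper checks the hypotheses of Theorem~\ref{thm:rel-analytic-criterion}: proper surjective submersion, simply connected rationally connected fibers, and sphere families. It obtains the sphere families from a relative Douady/Hom space in which $H^1(\Pone,h^*T_{Z_y}(-2[0]))=0$ makes first-jet evaluation a submersion, then takes a local section and the universal evaluation (Theorem~\ref{thm:rel-hom-deformation}, Proposition~\ref{prop:rel-unobstructed-jet-criterion}). For each $(x,b)$, the paper gets the unobstructed curve with $h(0)=x$ and $\partial_wh(0)=b$ directly from Koll\'ar's finite-jet theorem (Theorem~\ref{thm:rc-facts}(i)).

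Your substitute for that step has a gap. From a submersion near some $b_0\neq0$, you conclude that the set of realized derivatives is open and $\C^*$-stable, ``so it contains all of $T_xZ_y\setminus\{0\}$.'' That inference fails whenever $\dim Z_y\geq2$. An open $\C^*$-stable subset of $T_xZ_y\setminus\{0\}$ can be just a conical neighborhood of the line $\C b_0$; for instance, $\{|v|<|u|\}\subset\C^2$. To conclude, you would need the set of realized directions to be closed in the projectivization of $T_xZ_y$, and you do not argue this. The preliminary choice of a very free $h$ with $\partial_wh(0)=b_0\neq0$ is also not justified, though that point is minor.

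The repair uses your own ingredients in the other order. Start from the ramified map $h_2=h\circ\phi$. It satisfies $h_2(0)=x$, $\partial_wh_2(0)=0$, and $H^1(\Pone,h_2^*T_{Z_y}(-2[0]))=0$. Very freeness alone already gives this vanishing, so you do not need $a_i\geq2$. The derivative map on maps with $0\mapsto x$ is then a submersion at $h_2$, so its image contains a neighborhood of $0\in T_xZ_y$. The vanishing persists for nearby maps by semicontinuity. Precomposing with $w\mapsto\lambda w$ preserves the vanishing and scales the derivative by $\lambda$, so every $b$ is reached. More directly, run your relativization only at $(x,0)$ to get $G$ on a neighborhood $\Omega_0$ of $(x,0)$, and set $G_\lambda(w;x',b')=G(\lambda w;x',b'/\lambda)$. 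For $|\lambda|$ large, $G_\lambda$ satisfies \eqref{eq:rel-sphere-identities} on a neighborhood of any prescribed $(x,b)$. With this fix, your route is a valid alternative to citing the jet theorem. It needs only a very free curve through every point, together with the scaling symmetry of $\Pone$.
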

	
	Approximation and interpolation are required at the holomorphic
	endpoint; the intermediate liftings need only be continuous. The compact set $K$ need not be Runge, and neither
	$A\subset K$ nor $K\subset A$ is required.
	
	\subsection{Algebraic approximation}
	For algebraic source data we obtain an algebraic lifting.
	We formulate a relative version of the algebraic Oka-1 property
	introduced by Forstneri\v c and L\'arusson
	in~\cite[Definition~1.5]{FL25}, using the lifting formulation
	of Oka-1 maps in~\cite[Definition~7.7]{AF25}.
	
	\begin{definition}
		\label{def:algebraic-oka1-map}
		An \emph{algebraic Oka-1 map}, or \emph{aOka-1 map}, is a morphism
		$\pi:Z\to Y$ of smooth complex algebraic varieties for which
		$\pi^{\mathrm{an}}$ is a Serre fibration and the following lifting property
		holds.  Let $R$ be a smooth connected affine algebraic curve, let
		$K\subset R^{\mathrm{an}}$ be compact and Runge, let $A\subset K$ be finite,
		let $g:R\to Y$ be algebraic, and let
		$f:R^{\mathrm{an}}\to Z^{\mathrm{an}}$ be a continuous lifting of
		$g^{\mathrm{an}}$ which is holomorphic near $K$.  Given $\epsilon>0$ and
		integers $k_a\geq0$, there are an algebraic lifting $F:R\to Z$ and a
		homotopy from $f$ to $F^{\mathrm{an}}$ through continuous liftings of
		$g^{\mathrm{an}}$ such that
		\[
		\sup_Kd_Z(F,f)<\epsilon,
		\qquad
		j_a^{k_a}F=j_a^{k_a}f\quad(a\in A).
		\]
		When $Y$ is a point, the preceding property is the algebraic Oka-1
		property of \cite[Definition~1.5]{FL25}. The common order in that
		definition is a positive integer; for nonempty finite $A$ one may take
		$k=\max\{1,\max_{a\in A}k_a\}$, and conversely, point-dependent
		interpolation includes the choice $k_a=k$ for every $a$.
		For $A=\varnothing$ there is no interpolation condition.
	\end{definition}
	
	The same rational-connectedness hypothesis gives an algebraic lifting when the source and the base map are algebraic.
	
	\begin{theorem}
		\label{thm:relative-algebraic-oka1}
		Let
		\[
		\pi:Z\longrightarrow Y
		\]
		be a smooth projective morphism of smooth complex algebraic varieties whose
		fibers are connected and rationally connected.  Then $\pi$ is an aOka-1
		map.  More precisely, let $R$ be a smooth connected affine
		complex algebraic curve, let $K\subset R^{\mathrm{an}}$ be compact, let
		$A\subset R$ be finite, and let $g:R\to Y$ be algebraic.  Suppose that
		$f:R^{\mathrm{an}}\to Z^{\mathrm{an}}$ is a continuous lifting of
		$g^{\mathrm{an}}$ which is holomorphic on a neighborhood of $K\cup A$.
		Given $\epsilon>0$ and an integer $k_a\geq0$ for each $a\in A$,
		there is an algebraic lifting
		$F:R\to Z$ such that
		\[
		\sup_K d_Z(F,f)<\epsilon,
		\qquad
		j_a^{k_a}F=j_a^{k_a}f\quad(a\in A).
		\]
		Moreover, $F^{\mathrm{an}}$ and $f$ are homotopic through continuous liftings
		of $g^{\mathrm{an}}$.
	\end{theorem}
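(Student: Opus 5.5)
We first reduce to holomorphic data on all of $R$ and then algebraize by gluing a holomorphic section to an algebraic one across the ends of the curve. By Theorem~\ref{thm:rel-main} (applied to $g^{\mathrm{an}}$, a compact Runge neighborhood $K'\supset K\cup A$, and the finite set $A$), we may replace $f$ by a holomorphic lifting $f_1:R^{\mathrm{an}}\to Z^{\mathrm{an}}$. This lifting is $\epsilon/2$-close to $f$ on $K$, has the same $k_a$-jets at $A$, and is homotopic to $f$ through continuous liftings. Since $R$ is affine, $K'$ may be taken to be the complement of small punctured discs around the points of $\overline R\setminus R$, where $\overline R$ is the smooth projective completion. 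It therefore suffices to produce an algebraic lifting $F$ close to $f_1$ on $K'$, with the same jets at $A$ and homotopic to $f_1$.

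Liftings of $g$ are sections of the smooth projective family $W=R\times_YZ\to R$ with rationally connected fibers. We extend $W$ over $\overline R$ to a projective family $\overline W\to\overline R$, possibly singular over the boundary points. Next we extend the section. By the cutoff homotopy alluded to in the introduction, $f_1$ is homotopic, through continuous sections over $R$, to one that agrees with $f_1$ on $K'$ and, near each puncture, is a constant-type section extending continuously over $\overline R$. Near each puncture, any algebraic local section of $\overline W$ serves, since rational connectedness of the generic fiber gives a section over the generic point by Graber--Harris--Starr. We then obtain a continuous section $s$ of $\overline W$ over $\overline R$ that is holomorphic near $K'$ and near the finitely many boundary points.

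The core step is the algebraic approximation of $s$ by a section of $\overline W\to\overline R$, i.e.\ by a morphism $\overline R\to\overline W$, which restricts to an algebraic $F$ on $R$. Following the Koll\'ar--Miyaoka--Mori comb-smoothing method, we attach to the holomorphic curve $s(K')$ many very free rational curves in fibers. These come from the sphere families of Definition~\ref{def:rel-first-jet-spheres}, which are available by rational connectedness. Attaching them yields a section curve whose normal bundle $N$ satisfies $H^1(\overline R,N(-D))=0$, where $D$ is a divisor containing $\sum (k_a+1)a$ together with sufficiently many points on $K'$. Here is where a bundle with vanishing first cohomology must be built. The construction works on the Riemann surface first: we glue the given holomorphic section on $K'$, via a Runge/Cartan patching in the normal bundle, to the algebraic pieces near the punctures. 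The result is a holomorphic section over compact $\overline R$, hence algebraic by GAGA. Vanishing of $H^1$ then makes the Hom-scheme of sections smooth at this point of dimension $h^0(N(-D))$, with surjective evaluation onto jets along $D$. Varying inside the component, we recover the prescribed jets at $A$, and we obtain uniform closeness on $K'$ because the deformation is a small perturbation of the glued section. The homotopy claim follows since the perturbation is small on $K'$ and the cutoff homotopy handles the ends, given that the fibers are connected and simply connected.

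The main obstacle is the gluing step: obtaining a holomorphic section over all of $\overline R$ near $s$ on $K'$ with the interpolation, while keeping $H^1(N(-D))=0$. We would attempt it by first making the positivity sufficient so that a $\dbar$-problem in $N$ over $\overline R$, solved with $L^2$ estimates twisted by an ample divisor supported near the punctures, corrects the cutoff error. We would then use the sphere-family exponential maps to return from the normal bundle to $W$.
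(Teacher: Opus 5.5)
There is a genuine gap, and it sits at the step you call the main obstacle, which is the whole content of the theorem.

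Your glued section $s$ agrees with $f_1$ on $K'$ and with an algebraic section near the punctures. In between, it is a cutoff of a homotopy between two sections that are in general far apart, so its defect $\mathscr S_s(0)$ is of order one on the transition region. This breaks each of your proposed repairs:
\begin{itemize}
\item Cartan/Runge patching needs the two pieces to be small vertical displacements of each other on the overlap, and they are not.
\item A linear $\dbar$-correction, even with $H^1=0$ and with $L^2$ weights or positive twists concentrated near the punctures, makes the correction small only away from where the weight lives, i.e.\ on $K'$. It is not small on the defect region.
\item The equation $\mathscr S_s(u)=0$ is nonlinear, and $\Psi(s,u)$ is defined only for $u$ uniformly small. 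A weighted linear solve therefore neither solves the equation nor stays in its domain.
\end{itemize}
There is a second problem. The vanishing $H^1(N(-D))=0$ you get from comb smoothing concerns the normal bundle of a \emph{different}, algebraic section. It says nothing about the Cauchy--Riemann operator $D_s$ along your glued $s$ unless the two bundles are linked. Your later step ``vary inside the Hom-scheme'' also presupposes a holomorphic glued section that was never constructed.

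The paper fills this with two ingredients your plan lacks.
\begin{enumerate}
\item It fixes $V=h^*T_{\mathcal X/B}(-D)$ with $H^1(B,V)=0$, for a comb-smoothed section $h$ whose teeth lie over an open set $O$ avoiding $K\cup A$. It then transfers the equation along $s$ to $V$ by a smooth isomorphism $J$. This $J$ comes from parallel transport along a homotopy of sections and is then made holomorphic near $K\cup A$ and near $B\setminus R$ (Lemmas~\ref{lem:alg-homotopy-transport} and~\ref{lem:alg-protected-gauge}). The result has a constant defect $e$ and a linear gauge defect $L=G^{-1}\dbar G$, neither of which is pointwise small.
\item It grafts holomorphic rational disks from the sphere families onto a fine Vitali family of tiny disks (Lemmas~\ref{lem:rel-prepared} and~\ref{lem:rel-uniform-matching}). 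Inside each disk the whole coefficient vanishes, so $e$ and $L$ survive only on a set of area $O(r^{1/2})$. Lemma~\ref{lem:fixed-bundle-solution} then yields a small solution by contraction.
\end{enumerate}
Chow's theorem makes the resulting section of $\mathcal X\to B$ algebraic, and Lemma~\ref{lem:rel-section-homotopy} gives the homotopy. The key point is this nonlinear use of rational curves, not only their effect on $H^1$; your $L^2$ step lacks it.

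Two smaller points. The completion should be resolved to a smooth $\mathcal X$ for the comb smoothing. Rational connectedness of the geometric generic fiber must be checked before invoking Graber--Harris--Starr.
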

	
	The following characterization follows by taking a point as base in
	Theorem~\ref{thm:relative-algebraic-oka1} and using the necessary condition
	in~\cite[Proposition~1.7]{FL25}.
	
	\begin{corollary}
		\label{cor:projective-aoka-characterization}
		A connected smooth projective complex manifold $X$ is algebraically Oka-1
		if and only if $X$ is rationally connected. Every rationally connected smooth projective complex manifold is also
		an Oka-1 manifold.
	\end{corollary}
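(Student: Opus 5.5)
The corollary has two directions and a final sentence; I would handle them separately. For the sufficiency direction, take $Y=\{\mathrm{pt}\}$ and $Z=X$ in Theorem~\ref{thm:relative-algebraic-oka1}. The constant map $X\to\{\mathrm{pt}\}$ is then a smooth projective morphism of smooth complex algebraic varieties. Here I use GAGA: a smooth projective complex manifold is the analytification of a unique smooth projective variety, and algebraic maps from affine curves into it are the morphisms into that variety. Its single fiber $X$ is connected and rationally connected by hypothesis. The theorem therefore says that $X\to\{\mathrm{pt}\}$ is an aOka-1 map.

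By the last part of Definition~\ref{def:algebraic-oka1-map}, the point-base case with point-dependent orders contains the case of a common order $k\ge1$ used in \cite[Definition~1.5]{FL25}. So $X$ is algebraically Oka-1 in the sense of Forstneri\v c--L\'arusson. One small check belongs here: the Serre fibration requirement for a map to a point is automatic, since any homotopy into a point lifts by the constant-in-time homotopy. The theorem in fact gives more than that definition asks for, namely non-Runge $K$ and $A$ not required to lie in $K$.

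For the necessity direction I would simply cite \cite[Proposition~1.7]{FL25}: a connected smooth projective algebraically Oka-1 manifold is rationally connected. The only thing to confirm is that the algebraic Oka-1 notion used there coincides with the point-base case above. This is exactly the translation recorded at the end of Definition~\ref{def:algebraic-oka1-map}.

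For the final sentence there are two routes, both of which close it.
\begin{enumerate}
\item Combine the first part with \cite[Proposition~1.9]{FL25}, which says that algebraic Oka-1 implies Oka-1.
\item Apply Theorem~\ref{thm:rel-main} with $Y$ a point. Definition~\ref{def:rel-oka1-map} with a point base is the Oka-1 property of the manifold by \cite[Proposition~2.7 and Definition~7.7]{AF25}.
\end{enumerate}

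There is no real obstacle, since all the content sits in the cited theorems. The only care needed is matching definitions: positive common order versus point-dependent $k_a\ge0$, and the Runge and $A\subset K$ hypotheses, which are weaker in our theorems.
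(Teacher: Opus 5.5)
Your proposal is correct and matches the paper's proof. Necessity comes from Forstneri\v{c}--L\'arusson's Proposition~1.7, sufficiency comes from applying Theorem~\ref{thm:relative-algebraic-oka1} to $X\to\{\mathrm{pt}\}$ and reading Definition~\ref{def:algebraic-oka1-map} with a point base, and the Oka-1 assertion is your route (2), Theorem~\ref{thm:rel-main} with a point base; your route (1), via their Proposition~1.9 (algebraic Oka-1 implies Oka-1), is an equally valid alternative that the paper mentions only in its introduction.
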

	
	\begin{proof}
		If $X$ is algebraically Oka-1, \cite[Proposition~1.7]{FL25} implies that
		$X$ is rationally connected.  If $X$ is rationally connected, apply
		Theorem~\ref{thm:relative-algebraic-oka1} to $X\to\{\mathrm{pt}\}$.
		Definition~\ref{def:algebraic-oka1-map} with a point as base gives the
		algebraic Oka-1 property of $X$. Applying Theorem~\ref{thm:rel-main} to
		$X\to\{\mathrm{pt}\}$ gives the Oka-1 assertion.
	\end{proof}
	
	\subsection{Relation to previous work and proof strategy}
	Benoist--Wittenberg prove algebraic approximation for rationally simply
	connected projective varieties and tight approximation for such varieties
	over function fields of complex curves
	\cite[Theorems~1.2 and~1.4]{BW25}.  The relative formulation of Benoist--Wittenberg starts with
	a regular projective model over a projective curve and a holomorphic
	section on an arbitrary open subset; that formulation includes approximation on compact
	subsets and finite jets, allowing bad fibers of the model
	\cite[Definitions~3.1 and~3.3 and Remarks~3.4]{BW25}.
	Theorem~\ref{thm:relative-algebraic-oka1} assumes rational connectedness,
	with a smooth family over the affine source and a continuous global
	lifting holomorphic near the approximation and interpolation sets.
	In the projective model used in the proof of
	Theorem~\ref{thm:relative-algebraic-oka1}, bad fibers can occur only outside
	the affine source; the theorem does not assert the full bad-reduction
	formulation of tight approximation.
	
	Alarc\'on--Forstneri\v c~\cite[\S9]{AF25} observe that Gournay's Runge
	theorem, taken at face value, implies the approximation part of their
	conjecture, but report that they were unable to understand its proof.
	Gournay describes two relevant difficulties: dependence of the linearized
	equation on derivatives of the grafted map and control of the target chart
	used for matching~\cite[\S\S1.3 and~3.1]{Gou12}.
	Lemma~\ref{lem:rel-normal-form} fixes the principal Dolbeault operator,
	and Lemma~\ref{lem:rel-uniform-matching} supplies uniform fiber-derivative
	bounds and a positive target-chart margin.  The replacements change only
	the zero-order term.  Both the residual and the difference between the
	linearization and the fixed Dolbeault operator vanish outside measurable
	sets whose areas tend to zero.  A fixed right inverse for the principal
	part then yields uniformly bounded right inverses for the full
	linearizations; the operator calculation is given in
	Lemma~\ref{lem:fixed-bundle-solution}.
	
	Lemma~\ref{lem:fixed-bundle-solution} solves the semilinear equation
	\[
	\mathscr F_r(v)=\bar\partial_Ev+A_r(z,v)
	\]
	on a fixed holomorphic bundle with $H^1(\Sigma,E)=0$, once both
	$\mathscr F_r(0)$ and $d\mathscr F_r(0)-\bar\partial_E$ are small in fixed
	Sobolev norms.  The geometric construction supplies these two smallness
	conditions by replacing sections on disjoint rational disks and matching
	on thin annuli.
	
	In the analytic construction a positive auxiliary bundle removes the
	Dolbeault obstruction outside the bordered reconstruction region.  Bordered
	approximation is followed by an exhaustion retaining the prescribed jets;
	connectedness and simple connectivity of the fibers give extension and
	homotopy of sections over the open source.
	
	The algebraic proof works on a projective completion of the source.  An
	algebraic section with vanishing twisted first cohomology provides the
	fixed bundle.  A smooth bundle isomorphism, holomorphic near the prescribed
	data and near the omitted fibers, transfers the section equation to this
	bundle.  The constant and linear gauge defects are then controlled before
	Lemma~\ref{lem:fixed-bundle-solution} is applied; global reconstruction and
	Chow's theorem give the algebraic endpoint.
	
	Section~\ref{sec:preliminaries} contains the analytic and topological
	tools.  Sphere families are constructed in
	Section~\ref{sec:sphere-families}.  The section equation and the solution
	lemma are treated in Section~\ref{sec:vertical-preparation}; the analytic
	criterion follows in Section~\ref{sec:analytic-approximation}.  Algebraic
	preparation is in Section~\ref{sec:relative-algebraic-oka1}, and compact
	approximation and algebraic liftings are in
	Section~\ref{sec:compact-algebraic-proof}.
	
	\section{Analytic and topological tools}\label{sec:preliminaries}
	
	\subsection{Dolbeault estimates and holomorphic frames}\label{subsec:analytic-tools}
	
	Fix smooth metrics and an area form whenever Sobolev norms are used.
	For a complex vector bundle $E$ on a Riemann surface $\Sigma$, write
	$L^p_{0,1}(\Sigma,E)=L^p(\Sigma,\Lambda^{0,1}T^*\Sigma\otimes E)$.
	When $\Sigma$ is compact, we use the Sobolev embeddings
	\[
	W^{1,4}\hookrightarrow C^{0,1/2},\qquad
	W^{2,4}\hookrightarrow C^{1,1/2}
	\]
	for sections of $E$ \cite[Theorem~B.1.11]{MS12}. In particular, for $v\in W^{1,4}(\Sigma,E)$
	and a bounded coefficient $a$ vanishing outside a measurable set $S$,
	the estimates used below are
	\begin{equation}\label{eq:tools-small-area}
		\|v\|_{C^0}\leq C_S\|v\|_{W^{1,4}},\qquad
		\|a\|_{L^4}\leq\|a\|_{L^\infty}\operatorname{area}(S)^{1/4}.
	\end{equation}
	Here $C_S$ is the norm of the fixed Sobolev embedding, determined by the
	surface, bundle, metrics, and area form; the subscript stands for Sobolev.
	
	For $R>0$, the Cauchy--Green operator on $D_R=\{z\in\C:|z|<R\}$ is
	\begin{equation}\label{eq:tools-cauchy-green}
		T_Rf(z)=\frac1\pi\int_{D_R}\frac{f(\zeta)}{z-\zeta}\,dA(\zeta),
		\qquad \partial_{\bar z}T_Rf=f.
	\end{equation}
	Here $dA$ is Euclidean area measure, and the differential identity holds
	in the sense of distributions. For $1<p<\infty$, the operator maps
	$L^p(D_R)$ to $W^{1,p}(D_R)$; for bounded $f$, the same integral is continuous
	on $\overline{D_R}$. The corresponding estimates are
	\begin{equation}\label{eq:tools-cauchy-green-estimates}
		\begin{aligned}
			\|T_Rf\|_{W^{1,p}(D_R)}&\leq C_{p,R}\|f\|_{L^p(D_R)},\\
			\|T_Rf\|_{C^0(\overline{D_R})}&\leq CR\|f\|_{L^\infty(D_R)}.
		\end{aligned}
	\end{equation}
	The formula is given in \cite[Lemma~13.1]{For81}; the $L^p$ estimate
	follows from the Calder\'on--Zygmund estimate
	\cite[Theorem~B.2.7]{MS12}. We also use the corresponding interior
	higher-order estimates and smooth regularity.
	
	For a holomorphic bundle $E$ on a compact Riemann surface $\Sigma$,
	Dolbeault Fredholm theory identifies the kernel and cokernel of
	\[
	\bar\partial_E:W^{k+1,p}(\Sigma,E)\longrightarrow W^{k,p}_{0,1}(\Sigma,E)
	\]
	with $H^0(\Sigma,E)$ and $H^1(\Sigma,E)$, for $k\geq0$ and
	$1<p<\infty$. Here $W^{k,p}_{0,1}$ denotes Sobolev sections of
	$\Lambda^{0,1}T^*\Sigma\otimes E$. The range is closed, and
	\begin{equation}\label{eq:tools-dolbeault-estimate}
		\|u\|_{W^{k+1,p}}\leq C\bigl(\|\bar\partial_Eu\|_{W^{k,p}}
		+\|u\|_{L^p}\bigr).
	\end{equation}
	We also use the local version of this estimate and smoothness of weak
	solutions with smooth right-hand side
	\cite[Theorem~C.1.10, Lemma~C.2.1, and Theorem~C.2.3]{MS12}.
	Thus $H^1(\Sigma,E)=0$ gives a bounded right inverse of $\bar\partial_E$.
	Serre duality identifies $H^1(\Sigma,E)^*$ with
	$H^0(\Sigma,K_\Sigma\otimes E^*)$, where $K_\Sigma$ is the canonical
	bundle \cite[Section~C.1 and Remark~C.1.12]{MS12}.
	
	A complex-linear Cauchy--Riemann operator on a smooth bundle over a
	Riemann surface is equivalently a holomorphic structure. We retain the
	local argument to fix the frame convention used for the section equation.
	\begin{lemma}
		\label{lem:frames}
		Let $E$ be a smooth complex vector bundle on a Riemann surface $\Sigma$, and
		let
		\[
		D:C^\infty(\Sigma,E)\longrightarrow
		C^\infty(\Sigma,\Lambda^{0,1}T^*\Sigma\otimes E)
		\]
		be a complex-linear first-order operator satisfying
		\[
		D(fu)=\dbar f\otimes u+fDu.
		\]
		Then $D$ defines a holomorphic structure on $E$: there are local smooth
		frames in which $D=\dbar$, and the transition functions of these frames
		are holomorphic.
	\end{lemma}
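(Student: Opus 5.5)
The statement is local, so I would fix a point $z_0\in\Sigma$, a holomorphic coordinate $z$ on a disk $U\ni z_0$, and a smooth frame $e_1,\dots,e_r$ of $E|_U$. In this frame I write
\[
D\Bigl(\sum_j u_j e_j\Bigr)=\sum_j \bar\partial u_j\otimes e_j+\sum_{j}u_j\,De_j .
\]
So $D=\bar\partial+\alpha$, where $\alpha$ is a smooth $\mathfrak{gl}_r(\C)$-valued $(0,1)$-form, $\alpha=a(z)\,d\bar z$. A new smooth frame $\tilde e=eg$, with $g:U'\to GL_r(\C)$, satisfies $D\tilde e=0$ precisely when
\[
\partial_{\bar z}g+ag=0 .
\]
The task is therefore to solve this linear $\bar\partial$-equation for an invertible $g$ on a smaller disk around $z_0$.

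I would solve it by a fixed-point argument based on the Cauchy--Green operator \eqref{eq:tools-cauchy-green}. Shrink to $D_\rho$ and look for $g=I+h$ with
\[
h=-T_\rho\bigl(a(I+h)\bigr).
\]
By \eqref{eq:tools-cauchy-green}, any continuous solution satisfies $\partial_{\bar z}h=-a(I+h)$ in the sense of distributions. By the $C^0$ estimate in \eqref{eq:tools-cauchy-green-estimates}, the map $h\mapsto -T_\rho(a(I+h))$ is a contraction on $C^0(\overline{D_\rho})$ once $C\rho\|a\|_{L^\infty}<1/2$. Its fixed point satisfies $\|h\|_{C^0}\le 2C\rho\|a\|_{L^\infty}$, which is small, so $g=I+h$ is invertible on $D_\rho$.

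Next, $g$ must be smooth. The $W^{1,p}$ bound puts $h$ in $W^{1,p}_{\mathrm{loc}}$. The equation $\partial_{\bar z}h=-a(I+h)$ has right-hand side as regular as $h$, so the interior higher-order estimates and smooth regularity quoted after \eqref{eq:tools-cauchy-green-estimates} bootstrap $h$ into $C^\infty$. This bootstrap is the step I would check most carefully, though it is standard. The frame $\tilde e=eg$ is then a smooth frame on $D_\rho$ with $D\tilde e_j=0$. By the Leibniz rule, in this frame $D\bigl(\sum f_j\tilde e_j\bigr)=\sum\bar\partial f_j\otimes\tilde e_j$, that is, $D=\bar\partial$.

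Finally, take two such frames $\tilde e$ and $\hat e=\tilde e\,\phi$ on an overlap. Then
\[
0=D\hat e=\tilde e\,\bar\partial\phi,
\]
so $\bar\partial\phi=0$ and $\phi$ is holomorphic. Since $\phi$ is a transition between frames, it is also invertible. Hence these frames form a holomorphic atlas, and the resulting holomorphic structure on $E$ has $D$ as its Dolbeault operator.
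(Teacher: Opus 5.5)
Your proposal is correct and follows the paper's proof essentially step for step. The steps match: the same Cauchy--Green fixed-point equation $g=I-T_\rho(ag)$, a contraction on $C^0$ of a small disk; invertibility from the small $C^0$ distance to the identity; the $W^{1,p}$ estimate plus interior regularity for smoothness; and holomorphy of the transition matrices from $\bar\partial\phi=0$.
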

	\begin{proof}
		Put $n=\operatorname{rank}E$. In a smooth frame on a disk, write
		$D=\dbar+A(z)\,d\bar z$, where $A$ is a
		smooth matrix. Use the operator norm on matrices. A new frame represented
		by an invertible matrix $T$ is
		$D$-holomorphic exactly when
		$\partial_{\bar z}T=-AT$.  Let $T_R$ act entrywise on matrices.  On a sufficiently small disk, consider
		\[
		T=\id-T_R(AT).
		\]
		The $C^0$ estimate in \eqref{eq:tools-cauchy-green-estimates} makes the right-hand
		side a contraction on the Banach space
		$C^0(\overline{D_R},\operatorname{Mat}_{n\times n}(\C))$ when $CR\norm A_\infty<1/4$.
		Put $\kappa=CR\|A\|_\infty<1/4$.  The fixed point $T$ satisfies
		\[
		\|T\|_\infty\leq1+\kappa\|T\|_\infty,\qquad
		\|T-\id\|_\infty\leq\frac{\kappa}{1-\kappa}<\frac12.
		\]
		Thus every $T(z)$ is invertible.  Applying
		the $L^p$ estimate in \eqref{eq:tools-cauchy-green-estimates} to $AT$ first gives
		$T\in W^{1,p}$ and then, from the equation and interior regularity,
		$T\in C^\infty$.  If $T_1,T_2$ are two such frames, applying $D$ to
		$T_1=T_2(T_2^{-1}T_1)$ shows that
		$\dbar(T_2^{-1}T_1)=0$.  The transition matrix $T_2^{-1}T_1$ is therefore holomorphic.
	\end{proof}
	
	\subsection{Surfaces, bundles, and sections}
	\label{subsec:surface-tools}\label{subsec:bundle-tools}
	
	An open Riemann surface has an exhaustion by connected compact bordered
	surfaces $C_j\Subset\operatorname{int}C_{j+1}$, whose boundaries can be
	chosen to avoid a prescribed closed discrete set and with $C_1$
	containing a prescribed compact set in its interior. One obtains such an
	exhaustion by taking regular levels of compactly supported smooth
	functions, joining the required compact pieces by finitely many arcs,
	and avoiding the finitely many forbidden levels at each stage.
	Smooth triangulations are available by \cite[Theorem~10.6]{Mun66}.
	
	We use the isothermal-coordinate theorem
	\cite[Theorem~3.11.1]{Jost06}: an oriented smooth surface with a smooth
	positive-definite metric has a unique complex structure compatible with
	its conformal class. We also use the fact that an open Riemann surface
	deformation retracts onto a countable locally finite graph
	\cite[Lemma~2.1]{Wh61}. A proof in terms of a triangulation is given in
	\cite[Theorem~0.1 and its proof]{PutmanSpines}.
	The isothermal-coordinate theorem gives the compact-neighborhood construction below;
	the graph retraction is used for bundle trivializations and section homotopies.
	\begin{lemma}\label{lem:compact-neighborhood}
		Let $R$ be an open Riemann surface and let $C\subset R$ be compact.
		There exist an open neighborhood $W\subset R$ of $C$, a compact Riemann
		surface $\Sigma$, and a holomorphic embedding $W\hookrightarrow\Sigma$.
		The embedding identifies $W$ with an open subset of $\Sigma$ and
		preserves the complex structure that $W$ inherits from $R$.
	\end{lemma}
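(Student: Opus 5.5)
The plan is to cap off a neighborhood of $C$ by gluing disks along smooth boundary curves, and then use the isothermal-coordinate theorem to put a complex structure on the resulting closed surface that agrees with the given one on a neighborhood of $C$.

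\textbf{Step 1: a bordered neighborhood.} Choose a compact bordered surface $M\subset R$ with $C\subset\operatorname{int}M$. Take a compactly supported smooth function $\rho\geq0$ with $\rho>1$ near $C$, and let $M$ be the sublevel set $\{\rho\geq c\}$ for a regular value $c\in(0,1)$, as in the exhaustion discussion above. Connectedness of $M$ is irrelevant here. The boundary $\partial M$ is then a finite disjoint union of smooth circles $\gamma_1,\dots,\gamma_m$. Choose disjoint collars $U_i\cong\gamma_i\times(-1,1)$ in $R$, with $\gamma_i\times(-1,0]\subset M$.

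\textbf{Step 2: a closed smooth surface.} Glue a closed disk $\overline{\mathbb D}$ to each boundary circle, identifying $\partial\overline{\mathbb D}$ with $\gamma_i$ by an orientation-compatible diffeomorphism. Smooth this using the collar: glue $\gamma_i\times(-1,1)$ to an annulus $\{1-\delta<|w|<1+\delta\}$ extended to the disk. The result is
\[
\Sigma=\bigl(M\cup\textstyle\bigcup_iU_i\bigr)\cup\bigcup_i D_i,
\]
a compact oriented smooth surface without boundary, containing $M\cup\bigcup_i(\gamma_i\times(-1,0])$ as a smooth open-embedded piece. Note that it is essential to glue along the collar and not merely along $\gamma_i$; otherwise the smooth structure near $\gamma_i$ is not canonical.

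\textbf{Step 3: a metric extending the given conformal class.} Let $g_0$ be a Hermitian metric on $R$ compatible with its complex structure. Take any Riemannian metric $g_1$ on $\Sigma$, and a smooth cutoff $\chi$ on $\Sigma$ with $\chi=1$ on a neighborhood $W$ of $M$ whose closure lies in $M\cup\bigcup_i(\gamma_i\times(-1,1/2))$, and $\chi=0$ outside a slightly larger open set still inside the image of $M\cup\bigcup_iU_i$. Put
\[
g=\chi g_0+(1-\chi)g_1 .
\]
This is a convex combination of positive-definite metrics, so it is a smooth positive-definite metric on $\Sigma$. On $W$ it equals $g_0$. The isothermal-coordinate theorem \cite[Theorem~3.11.1]{Jost06} now gives a unique complex structure $J$ on $\Sigma$ compatible with the conformal class of $g$ and the orientation. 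On $W$ that conformal class and orientation coincide with those of $R$. By the uniqueness in the theorem applied on the open set $W$ (equivalently, because $J$ is determined pointwise as rotation by $+\pi/2$ for $g$), $J$ agrees with the complex structure of $R$ there. So the inclusion $W\hookrightarrow\Sigma$ is holomorphic and identifies $W$ with an open subset.

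\textbf{Main obstacle.} The only delicate point is the smooth gluing in Step 2: making $\Sigma$ a smooth manifold into which $W$ embeds smoothly and openly. The collar construction handles this. Everything after that is routine, since the complex structure is recovered pointwise from the metric and orientation.
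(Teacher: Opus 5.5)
Your argument is correct and is essentially the paper's proof: the paper compactifies by taking the smooth double of a connected bordered surface $D_1$ with $D_0\subset\operatorname{int}D_1$, rather than capping the boundary circles of $M$ with disks, and then uses the same interpolated metric $\chi g_0+(1-\chi)g_1$ and the isothermal-coordinate theorem. One small correction: connectedness is not irrelevant, because a compact Riemann surface is connected by convention and the later genus and degree arguments on $\Sigma$ rely on this; choose $M$ connected, which is possible since $R$ is connected (join the components of your superlevel set by thickened arcs), and the capped surface $\Sigma$ is then connected.
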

	\begin{proof}
		Choose connected bordered smooth surfaces $D_0,D_1\subset R$ with
		$C\subset\operatorname{int}D_0$ and
		$D_0\subset\operatorname{int}D_1$.  Form the smooth double $M$ of $D_1$:
		take a second copy with the opposite orientation, identify corresponding
		boundary points, and use a boundary collar to give the identification a
		smooth structure.  In a collar, the signed distance normal to the
		boundary is the transverse coordinate on the double.  Thus $M$ is a
		compact oriented smooth surface without boundary, containing
		$\operatorname{int}D_1$ as an open subset.
		
		Choose a smooth Hermitian metric for the original complex structure on
		$R$.  Let $g_0$ be its restriction near $D_0$, and choose a smooth
		positive-definite metric $g_1$ on $M$.  Take $\chi\in C^\infty(M,[0,1])$ equal to one near $D_0$ and
		supported where $g_0$ is defined inside $\operatorname{int}D_1$.  Set
		\[
		g=\chi g_0+(1-\chi)g_1
		\]
		on that open set and $g=g_1$ outside $\operatorname{supp}\chi$.
		The formulas agree where $\chi=0$; positivity of $g_0$ and $g_1$
		makes $g$ positive definite.  The isothermal-coordinate theorem \cite[Theorem~3.11.1]{Jost06} gives a complex
		structure on $M$.  Near $D_0$, the conformal class of $g$ and the orientation of $M$ are
		those of the original Riemann surface, so the two complex structures
		agree.  Write $\Sigma$ for $M$ with the complex structure determined by $g$.
		Any sufficiently small neighborhood $W$ of $C$ contained in
		$\operatorname{int}D_0$ has the required embedding.
	\end{proof}
	
	Ehresmann's theorem \cite{Ehr51} makes a proper surjective smooth
	submersion a locally trivial smooth fiber bundle. Over a paracompact
	base it is a Hurewicz fibration, hence a Serre fibration
	\cite[p.~379]{Hatcher02}. Pullback by a smooth map
	is a smooth fiber bundle; pullback by a continuous map is a topological
	fiber bundle.
	
	We use the following relative smoothing statement for sections. Let
	$p:E\to M$ be a smooth fiber bundle, let $C\subset M$ be closed, and let
	$s:M\to E$ be a continuous section that is smooth near $C$. Fix a metric $d_E$ inducing
	the topology of $E$. Given a neighborhood $U$
	of $C$ and a positive continuous function $\delta$ on $M$, there is a
	smooth section $\widetilde s$ equal to $s$ near $C$, with
	$d_E(\widetilde s(x),s(x))<\delta(x)$, homotopic to $s$ through sections
	by a homotopy fixed near $C$. The statement also holds for manifolds with
	boundary or corners. Apply the finite-dimensional section case of
	Wockel's relative Steenrod approximation theorem~\cite[Theorem~11]{Wockel09}
	on each connected component of $M$. To obtain the stated form,
	choose $C\subset V\subset\overline V\subset U$ with $\overline V$ in the
	region where $s$ is smooth. In Wockel's theorem, take $L=M$, allow
	modifications on $M\setminus\overline V$, and use the neighborhood
	\[
	\{e\in E:d_E(e,s(p(e)))<\delta(p(e))\}.
	\]
	
	An open Riemann surface is Stein by the Behnke--Stein theorem
	\cite[Corollary~26.8]{For81}. Grauert's Oka principle for vector bundles
	\cite[Satz~I]{Grauert58} implies that a topologically trivial complex vector
	bundle with a holomorphic structure on a Stein manifold is holomorphically
	trivial. In particular this applies to every holomorphic vector bundle
	on an open Riemann surface: a graph retraction reduces its topological
	trivialization to extending frames along edges, using path connectedness
	of $\operatorname{GL}_n(\C)$, where $n$ is the bundle rank.
	
	The next two lemmas are the topological steps in exhaustion and in the
	passage from a holomorphic endpoint to a homotopy through liftings.
	\begin{lemma}
		\label{lem:rel-top-extension}
		Let $p:E\to M$ be a smooth locally trivial fiber bundle over a smooth
		real surface without boundary, with connected and simply connected
		fibers.  Suppose that $M_0\subset M_1\subset M$ are compact smooth
		subsurfaces with boundary and $M_0\subset\operatorname{int}M_1$.
		Fix pairwise disjoint closed coordinate disks
		$D_1,\ldots,D_s\subset\operatorname{int}(M_1\setminus M_0)$.
		Given a smooth section $s_*$ on a neighborhood of
		\[
		M_0\cup D_1\cup\cdots\cup D_s,
		\]
		there is a smooth section on a neighborhood of $M_1$ that agrees with
		$s_*$ on a possibly smaller neighborhood of this union.
	\end{lemma}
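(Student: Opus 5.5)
This is obstruction theory on a relative CW structure of the pair $(M_1, M_0\cup D_1\cup\cdots\cup D_s)$, a surface relative to a subsurface. The fiber $F$ is connected and simply connected, so $\pi_0(F)=\pi_1(F)=0$. Hence the obstructions to extending a section over $0$-, $1$- and $2$-cells all vanish, and extension is possible cell by cell. Concretely:
\begin{enumerate}[label=(\roman*)]
\item Choose slightly larger compact smooth subsurfaces $N\supset M_0\cup D_1\cup\cdots\cup D_s$ inside the domain of $s_*$.
\item Take a smooth triangulation of $M_1$ in which $N$ is a subcomplex \cite[Theorem~10.6]{Mun66}, and extend $s_*|_N$ continuously over the skeleta of $\overline{M_1\setminus N}$.
\item Smooth the result relative to a neighborhood of the original union.
\end{enumerate}

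For the cell-by-cell extension, each closed cell $\sigma$ of $M_1$ not in $N$ is contractible, so $E|_\sigma$ is trivial: $E|_\sigma\cong\sigma\times F$ by local triviality over a contractible base. A section over $\sigma$ is then just a map $\sigma\to F$.
\begin{itemize}
\item On vertices, choose any point of the fiber; this is possible because fibers are nonempty, being connected by convention.
\item On an edge, the section is prescribed on the two endpoints, and connectedness of $F$ gives a path between them.
\item On a $2$-cell, the section is prescribed on the boundary circle, giving a loop in $F$. Since $\pi_1(F)=0$, the loop extends over the disk.
\end{itemize}
The trivializations over adjacent cells need not agree, but this is harmless: we only ever extend a section already defined on $\partial\sigma$, using a trivialization over $\sigma$ alone. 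This yields a continuous section $s_1$ on $M_1$ equal to $s_*$ on $N$.

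The step needing the most care is obtaining smoothness on a neighborhood of $M_1$ while keeping agreement with $s_*$ on a neighborhood of $M_0\cup\bigcup D_i$. First extend $s_1$ to a slightly larger compact subsurface $M_1'\supset M_1$ inside $M$. This can be done by the same cell argument on a triangulation of $M_1'$ with $M_1$ as a subcomplex, or by using a collar retraction of $M_1'$ onto $M_1$ and pulling back via local triviality; the cell argument is cleaner. Then apply the relative smoothing statement for sections quoted in \S\ref{subsec:bundle-tools} (Wockel's theorem) on $\operatorname{int}M_1'$ with closed set $C=N'$, a compact neighborhood of the union inside $\operatorname{int}N$ where $s_1=s_*$ is smooth. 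This gives a smooth section on $\operatorname{int}M_1'$, a neighborhood of $M_1$, equal to $s_*$ near $C$, and hence on a possibly smaller neighborhood of $M_0\cup D_1\cup\cdots\cup D_s$.

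The disks play no special role beyond being part of the subcomplex. Disjointness from $M_0$ and from each other only guarantees that $N$ can be chosen as a disjoint union of a thickening of $M_0$ and small closed disks, all inside the domain of $s_*$.
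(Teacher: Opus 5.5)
Your proposal is correct and follows essentially the paper's proof. Both thicken the prescribed set to a subcomplex $N$, extend over vertices, edges and $2$-simplices using connectedness and $\pi_1(F)=0$, and then apply the relative (Wockel) smoothing relative to a smaller $N'\Subset\operatorname{int}N$. The differences are only cosmetic: the paper triangulates a larger $M_2\supset M_1$ directly with $N$ as a subcomplex, which avoids your second extension from $M_1$ to $M_1'$. It also subdivides so that each closed simplex lies in a trivialization domain, rather than invoking triviality of bundles over contractible simplices.
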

	
	\begin{proof}
		Let $s_*:O\to E$ be the prescribed section, where $O$ is a neighborhood
		of $M_0\cup D_1\cup\cdots\cup D_s$.
		Choose a compact smoothly bounded, possibly disconnected surface $N$ such
		that
		\[
		M_0\cup D_1\cup\cdots\cup D_s
		\subset\operatorname{int}N\subset N\subset O\cap\operatorname{int}M_1.
		\]
		Also choose a compact smoothly bounded neighborhood $M_2$ of $M_1$ in $M$,
		with $M_1\subset\operatorname{int}M_2$.  It is enough to construct the
		section on $M_2$.
		
		Triangulate $M_2$ so that $N$ is a subcomplex.  First triangulate
		the compact one-manifolds $\partial N$ and $\partial M_2$.
		The relative triangulation theorem
		in~\cite[Theorem~10.6]{Mun66} extends the boundary triangulations first
		over $N$ and then over the closure of $M_2\setminus N$; the latter is a
		compact smooth surface with boundary $\partial N\sqcup\partial M_2$.
		The two triangulations
		agree on $\partial N$ and hence glue to a finite smooth triangulation of
		$M_2$ in which $N$ is a subcomplex.  Choose a finite cover of $M_2$ by bundle trivialization domains.
		After sufficiently many barycentric subdivisions, the mesh is smaller
		than a Lebesgue number for this cover.  Each closed simplex then lies
		in one trivialization domain, which is an open neighborhood of that
		simplex.
		Put $\widehat s=s_*|_N$ and extend $\widehat s$ over the
		remaining simplices in increasing dimension.
		
		For each vertex $a$ outside $N$, choose $\widehat s(a)\in p^{-1}(a)$.
		For an edge $e=[a,b]$ not contained in $N$, the endpoint values are now
		prescribed.  Choose a trivialization
		$\tau_e:p^{-1}(e)\to e\times F_e$, and let $\alpha_e,\beta_e\in F_e$
		be the fiber coordinates of $\widehat s(a),\widehat s(b)$.  Connectedness of the fiber
		manifold $F_e$ gives a path from $\alpha_e$ to $\beta_e$.  Parametrize
		$e$ by $[0,1]$ and apply $\tau_e^{-1}$ to the graph of that path to
		extend $\widehat s$ over $e$ without changing the prescribed
		values at $a$ and $b$.
		
		For a two-simplex $\Delta$ not contained in $N$, the section is now
		defined on $\partial\Delta$.  A trivialization
		$\tau_\Delta:p^{-1}(\Delta)\to\Delta\times F_\Delta$ identifies the
		boundary section with a map $b_\Delta:\partial\Delta\to F_\Delta$.
		The loop $b_\Delta$ is null-homotopic because
		$\pi_1(F_\Delta)=0$.  A null-homotopy descends to the cone on
		$\partial\Delta$, which is a closed disk, and hence gives an extension
		$\widetilde b_\Delta:\Delta\to F_\Delta$ with
		$\widetilde b_\Delta|_{\partial\Delta}=b_\Delta$.  Set
		$\widehat s(x)=\tau_\Delta^{-1}(x,\widetilde b_\Delta(x))$
		for $x\in\Delta$.
		The extensions agree on common faces, so finite gluing gives a continuous
		section on $M_2$.  There are no simplices of higher dimension.
		
		The resulting $\widehat s$ is continuous on $M_2$ and agrees with
		$s_*$ on $N$.  It is therefore smooth on $\operatorname{int}N$, a neighborhood of
		$M_0\cup D_1\cup\cdots\cup D_s$.  Apply the relative smoothing statement above on $M_2$, relative to a
		smaller compact smoothly bounded neighborhood
		\[
		M_0\cup D_1\cup\cdots\cup D_s\subset N'\Subset\operatorname{int}N.
		\]
		Use $C=N'$ and $U=\operatorname{int}N$ in that statement (with any positive
		continuous tolerance $\delta$).  It gives a smooth section on $M_2$ which is unchanged
		near $N'$.  Since
		$M_1\subset\operatorname{int}M_2$, its restriction is the required smooth
		section on a neighborhood of $M_1$.
	\end{proof}
	
	\begin{lemma}
		\label{lem:rel-section-homotopy}
		Let $p:E\to R$ be a locally trivial fiber bundle over an open Riemann
		surface.  If the fibers of $p$ are connected and simply connected, any two
		continuous sections of $p$ are homotopic through sections.
	\end{lemma}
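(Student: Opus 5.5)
The plan is to reduce the statement to an extension problem over a one-dimensional complex, using the graph retraction of open Riemann surfaces quoted above (\cite[Lemma~2.1]{Wh61}).

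Let $s_0,s_1:R\to E$ be the two continuous sections. A homotopy through sections is the same as a continuous section of the pulled-back bundle $\widetilde E=\operatorname{pr}_R^*E\to R\times[0,1]$ that equals $s_0$ on $R\times\{0\}$ and $s_1$ on $R\times\{1\}$.

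First I would pass to the spine. Choose a deformation retraction $\rho_t:R\to R$, with $\rho_0=\id$ and $\rho_1=r$, onto a countable locally finite graph $\Gamma\subset R$, so that $r|_\Gamma=\id$. Since $R$ is paracompact, $p$ is a Hurewicz fibration (Ehresmann together with \cite[p.~379]{Hatcher02} in the smooth case; for a general locally trivial bundle over a paracompact base the same Hurewicz statement holds). Lifting the homotopy $\rho_t$ with initial lift $s_i$ produces a path of sections joining $s_i$ to a section lying over $r$. Concretely, this is done by lifting the homotopy $(x,t)\mapsto\rho_t(x)$ starting at $s_i$, then transporting back along the fibers.

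To keep the bookkeeping clean, I would instead use the following reformulation: sections of $E$ up to homotopy correspond to sections of $r^*E$ up to homotopy. Indeed, $E\cong r^*E$ as bundles, because $r\simeq\id$ and pullbacks of fibrations along homotopic maps over a paracompact base are fiber homotopy equivalent. Sections of $r^*(E|_\Gamma)$ are determined by maps over $\Gamma$ composed with $r$. Hence it suffices to show that any two sections of $E|_\Gamma$ are homotopic through sections. Then $s_i\simeq r^*(s_i|_\Gamma)$, and the homotopy on $\Gamma$ pulls back.

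On $\Gamma$ I would argue cell by cell, as in the proof of Lemma~\ref{lem:rel-top-extension}. Subdivide $\Gamma$ so that each closed edge lies in a trivializing open set; local finiteness keeps this countable and locally finite. The aim is to extend the given section over the $1$-skeleton $\Gamma\times\{0,1\}$ to all of $\Gamma\times[0,1]$.
\begin{itemize}
\item For each vertex $v$, connectedness of the fiber gives a path from $s_0(v)$ to $s_1(v)$ in the trivialization, which defines the section on $\{v\}\times[0,1]$.
\item For each edge $e$, the section is now given on $\partial(e\times[0,1])$, a circle. In a trivialization over $e$ it becomes a loop in $F$, which is null-homotopic because $\pi_1(F)=0$. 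Coning it off extends the section over $e\times[0,1]$.
\end{itemize}
The extensions agree on common faces, and local finiteness makes the glued section continuous.

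I expect the main obstacle to be the passage from $\Gamma$ back to $R$, that is, justifying that sections over $R$ are governed by sections over $\Gamma$. To avoid fibration-theoretic subtleties I would do it directly:
\begin{enumerate}
\item Lift the deformation $\rho_t$ using the homotopy lifting property to get a homotopy $s_i\simeq\sigma_i$, where $\sigma_i(x)$ lies over $r(x)$.
\item Compose with parallel transport back along the paths $t\mapsto\rho_{1-t}(x)$, again by homotopy lifting, applied to the homotopy from the extended section on $\Gamma\times[0,1]$ pulled back by $r$.
\end{enumerate}

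A cleaner alternative, which I would try first, avoids the retraction altogether. Triangulate $R\times[0,1]$ by a countable locally finite triangulation with $R\times\{0,1\}$ a subcomplex and each simplex inside a trivializing set. Then extend the section over the remaining cells in increasing dimension: vertices, edges, triangles and tetrahedra. Connectedness and $\pi_1(F)=0$ handle dimensions $\le2$. The $3$-cells, however, would need $\pi_2(F)=0$, which fails in general. This is precisely why the retraction onto the $1$-dimensional $\Gamma$ is needed: it makes $\Gamma\times[0,1]$ two-dimensional.
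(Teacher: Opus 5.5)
Your architecture is the paper's: retract $R$ onto a locally finite graph $\Gamma$, build a homotopy $H_\Gamma$ over $\Gamma\times[0,1]$ cell by cell from connectedness and $\pi_1(F)=0$, and return to $R$ by homotopy lifting. Your remark that a direct triangulation of $R\times[0,1]$ would need $\pi_2(F)=0$ is also correct. The gap is in the return step, which you flag as the main obstacle but do not carry out.

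Your reformulation asserts that sections of $r^*(E|_\Gamma)$ are pullbacks of sections over $\Gamma$. This is false: if $\Gamma$ is a point, these sections are arbitrary maps $R\to E|_\Gamma$. Proving that they are homotopic through sections to pullbacks requires exactly the lifting argument you wanted to avoid. Moreover, homotopic maps only give a fiber homotopy equivalence $E\simeq(i\circ r)^*E$, not an isomorphism, and you never say why it carries $s_i$ to $(s_i|_\Gamma)\circ r$.

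In your direct version, step 1 gives a homotopy from $s_i$ to $\sigma_i$ that covers $\rho_t$, so it is not through sections. Step 2 is an unconstrained lift of $(x,u,v)\mapsto\rho_{1-v}(x)$ starting at $H_\Gamma(r(x),u)$. At $v=1$ it gives a homotopy through sections between the back-transports of $\sigma_0$ and $\sigma_1$. Nothing forces these to equal $s_0$ and $s_1$, since an arbitrary lift forward and then backward along $\rho_t$ need not return to the starting section. Closing that loop would itself require a relative lifting argument.

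The missing idea is to pin the endpoints with the relative homotopy lifting property for the CW pair $(R\times[0,1],R\times\{0,1\})$. Over the base homotopy $(x,u,v)\mapsto\rho_{1-v}(x)$, prescribe the initial lift $H_\Gamma(r(x),u)$ at $v=0$ and the side lifts $s_j(\rho_{1-v}(x))$ at $u=j$. The side data are legitimate lifts precisely because $s_0,s_1$ are global sections. They match the initial data at the corners, since $H_\Gamma(r(x),j)=s_j(\rho_1(x))$. The extended lift $\widetilde B$ then gives $H(x,u)=\widetilde B(x,u,1)$, which lies over $\rho_0(x)=x$ and satisfies $H(\cdot,j)=s_j$. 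This is how the paper concludes.

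Note also that your step 1 needs no lifting at all: $s_i\circ\rho_t$ already covers $\rho_t$ and ends at $(s_i|_\Gamma)\circ r$.
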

	
	\begin{proof}
		Let $s_0,s_1:R\to E$ be the sections and put $I=[0,1]$.  By
		the graph-retraction statement above, there is a deformation
		retraction $d_t:R\to R$, $t\in[0,1]$, onto a countable locally finite graph
		$\Gamma\subset R$.  Write $i:\Gamma\hookrightarrow R$ and $r:R\to\Gamma$
		for the inclusion and retraction, so $d_0=\operatorname{id}_R$ and
		$d_1=i\circ r$.
		
		First construct a homotopy through sections over $\Gamma$.  Subdivide the edges of $\Gamma$ so that each closed edge lies in a bundle chart.  At each vertex $a$,
		choose a path in $E_a=p^{-1}(a)$ from $s_0(a)$ to $s_1(a)$.  On the boundary of an
		edge square $e\times I$, prescribe $s_0$ and $s_1$ on the two horizontal
		sides and the chosen vertex paths on the two vertical sides.  In a
		trivialization over $e$, the four prescribed sides form a loop in the model fiber.
		Simple connectivity fills the loop over the square.  The fillings agree
		on common sides, and local finiteness gives a continuous map
		\[
		H_\Gamma:\Gamma\times I\to E,
		\qquad p(H_\Gamma(a,u))=i(a),
		\qquad H_\Gamma(a,j)=s_j(i(a))\quad(j=0,1).
		\]
		
		We now return to $R$ while retaining the original endpoints.  A fiber
		bundle has the relative homotopy-lifting property for every CW pair
		\cite[Proposition~4.48]{Hatcher02}.  Triangulate $R$, give $R\times I$ the product CW
		structure, and apply the relative homotopy-lifting property to the CW pair
		$(R\times I,R\times\partial I)$, using $v\in I$ as the
		homotopy-lifting variable; $u$ remains the section-homotopy parameter.
		The base homotopy is
		\[
		B(x,u,v)=d_{1-v}(x).
		\]
		At $v=0$ prescribe the lift $H_\Gamma(r(x),u)$; on the two sides $u=j$
		prescribe $s_j(d_{1-v}(x))$ for $j=0,1$.  On the intersections $v=0$, $u=j$, the two prescriptions agree because
		\[
		H_\Gamma(r(x),j)=s_j(i(r(x)))=s_j(d_1(x)).
		\]
		Both prescribed lifts project to $B$.  Relative homotopy lifting therefore
		extends the prescribed boundary data to a continuous lift $\widetilde B:R\times I\times I\to E$.
		Set
		\[
		H(x,u)=\widetilde B(x,u,1).
		\]
		Then $p(H(x,u))=d_0(x)=x$, $H(x,0)=s_0(x)$, and $H(x,1)=s_1(x)$.
		Thus $H$ is the required homotopy through sections.  The relative
		lifting step retains both prescribed endpoints while returning the base
		map to $\operatorname{id}_R$.
	\end{proof}
	
	\section{First-jet sphere families}\label{sec:sphere-families}
	
	We verify the geometric hypothesis of
	Theorem~\ref{thm:rel-analytic-criterion} for rationally connected fibers.
	Koll\'ar's finite-jet theorem \cite[Definition--Theorem~2.1(5)]{Kol00}
	and simple-connectivity result \cite[Proposition~2.3]{Kol00} give the
	following assertions, respectively.
	\begin{theorem}
		\label{thm:rc-facts}
		Let $X$ be a connected smooth projective rationally connected complex
		manifold.
		\begin{enumerate}[label=\textup{(\roman*)},leftmargin=2.1em]
			\item Let $p_1,\dots,p_\ell$ be distinct points of $\Pone$, let
			$m_1,\dots,m_\ell$ be positive integers, and prescribe at each $p_i$ a
			holomorphic map jet of length $m_i$ (that is, derivatives through order
			$m_i-1$), with values in $X$.  There exists a morphism $h:\Pone\to X$
			realizing these jets and satisfying
			\[
			H^1\left(\Pone,h^*TX\left(-\sum_i m_i p_i\right)\right)=0.
			\]
			\item The topological fundamental group of $X$ is trivial.  Equivalently,
			every continuous loop in $X$ is homotopic to a constant loop.
		\end{enumerate}
	\end{theorem}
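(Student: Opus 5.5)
Both assertions of Theorem~\ref{thm:rc-facts} are quoted from Koll\'ar, so the plan is mainly to check that his statements match the forms used here. No new geometry should be needed.

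For (i), I would use the characterization of rational connectedness in \cite[Definition--Theorem~2.1]{Kol00}. Over $\C$, a connected smooth projective $X$ that is rationally connected in the sense of our definition (two general points lie on a rational curve) satisfies all the equivalent conditions listed there, in particular item~(5). That item states: for any distinct points $p_1,\dots,p_\ell\in\Pone$ and any prescribed jets at the $p_i$ of length $m_i$, there is a morphism $h:\Pone\to X$ with these jets such that $h^*TX(-\sum m_ip_i)$ is ample. Our convention is that a jet of length $m_i$ means derivatives through order $m_i-1$, that is, a restriction to the subscheme $m_ip_i$; this matches Koll\'ar's convention, which I would check against his formulation.

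To pass from ampleness to the cohomology vanishing, I would argue as follows. On $\Pone$, Grothendieck splitting writes $h^*TX(-\sum m_ip_i)\cong\bigoplus_j\mathcal O(a_j)$. Ampleness means every $a_j\geq1$. Then
\[
H^1\Bigl(\Pone,h^*TX\Bigl(-\sum_i m_ip_i\Bigr)\Bigr)=\bigoplus_j H^1(\Pone,\mathcal O(a_j))=0,
\]
since $a_j\geq-1$ already suffices. If one prefers not to invoke item~(5) as stated, I would fall back on Koll\'ar's standard route. Start with a very free curve through general points. Attach very free teeth to form a comb. Smooth the comb so that the twisted normal bundle becomes ample. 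Then use the resulting unobstructedness, namely vanishing $H^1$ of the twisted pullback tangent bundle, to deform $h$ so that it realizes arbitrary jets, not only general ones.

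The main obstacle I expect is exactly this last step: reaching \emph{arbitrary} prescribed jets, including jets based at special points of $X$ and degenerate jets with vanishing derivatives, rather than jets at general points. Koll\'ar handles this by a deformation argument. The space of morphisms with fixed jets at the $p_i$ is smooth of the expected dimension wherever the twisted $H^1$ vanishes, and the jet evaluation map $\operatorname{Hom}(\Pone,X)\to\prod_i J^{m_i-1}_{p_i}$ is then submersive. Combining this with the fact that the whole construction can be carried out with the first points chosen anywhere gives surjectivity onto all jets. I would cite this rather than reprove it.

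For (ii), \cite[Proposition~2.3]{Kol00} states that a rationally connected smooth projective variety over $\C$ is simply connected. The outline is as follows:
\begin{enumerate}[label=\textup{(\alph*)},leftmargin=2.1em]
\item A finite étale cover of $X$ is again rationally connected, because rational curves lift to the cover.
\item Rationally connected varieties satisfy $\chi(\mathcal O)=1$, since $H^i(\mathcal O)=0$ for $i>0$ by the vanishing of holomorphic forms.
\item Multiplicativity of $\chi(\mathcal O)$ under a cover of degree $d$ forces $d=1$, so $X$ has no nontrivial finite étale covers.
\item Koll\'ar's free rational curve argument shows that $\pi_1(X)$ is a quotient of the image of $\pi_1$ of a general fiber of a rational curve family, and hence is finite. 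Together with (c), this gives $\pi_1(X)=0$.
\end{enumerate}
The equivalent formulation with null-homotopic loops is just the definition of a trivial fundamental group for a path-connected space; $X$ is path-connected because it is a connected manifold.
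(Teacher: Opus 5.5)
Your proposal is correct and is essentially the paper's argument. The paper also obtains (i) and (ii) by citing Koll\'ar's Definition--Theorem~2.1(5) and Proposition~2.3, and your Grothendieck-splitting step is just the translation the paper leaves implicit: ampleness of $h^*TX(-\sum_i m_ip_i)$ gives all splitting degrees $a_j\geq1$, hence $H^1=0$. If you are unsure of Koll\'ar's jet-length convention, extend each prescribed jet by one order before applying item~(5); ampleness after the larger twist still gives the required vanishing for $-\sum_i m_ip_i$.
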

	
	To vary the base point and the prescribed first jet holomorphically,
	we need relative spaces of holomorphic maps. Fix a smooth projective
	morphism $\pi:X\to S$ of complex manifolds, a smooth projective curve
	$C$, and a finite effective Cartier divisor $D\subset C$, possibly empty.
	For analytic spaces $T\to S$, consider the functors
	\[
	T\longmapsto\operatorname{Mor}_S(C\times T,X),
	\qquad T\longmapsto\operatorname{Mor}_S(D\times T,X),
	\]
	where $\operatorname{Mor}_S$ denotes holomorphic maps over $S$.
	The following theorem establishes their representability. We write
	\[
	\mathcal H=\Hom_S(C\times S,X),
	\qquad \mathcal J_D=\Hom_S(D\times S,X)
	\]
	for the representing spaces, with $\mathcal J_D=S$ when $D$ is empty,
	and $\rho_D:\mathcal H\to\mathcal J_D$ for restriction to $D$.
	For $s_0\in S$ and $h:C\to X_{s_0}$, put $E=h^*T_{X/S}$ and write
	$[h]$ for the point of $\mathcal H$ represented by $h$.
	
	\begin{theorem}
		\label{thm:rel-hom-deformation}
		The two functors above, including their values on nonreduced analytic
		spaces, are represented by complex analytic spaces $\mathcal H$ and
		$\mathcal J_D$ with universal evaluations
		$C\times\mathcal H\to X$ and $D\times\mathcal J_D\to X$.
		These representing properties are compatible with analytic base change.
		The restriction map $\rho_D$ is holomorphic, and $\mathcal J_D$ is smooth
		over $S$.
		
		At $[h]$, the relative tangent space of $\mathcal H\to S$ is
		$H^0(C,E)$, and $H^1(C,E)$ is a complete obstruction
		space for lifting maps over a prescribed infinitesimal extension of the
		base point.  For $\rho_D$, the corresponding relative tangent and complete
		obstruction spaces are
		\[
		H^0(C,E(-D)),\qquad H^1(C,E(-D)).
		\]
		In particular, if $H^1(C,E(-D))=0$, then $\mathcal H$ is nonsingular at
		$[h]$ and $\rho_D$ is a holomorphic submersion at $[h]$.  Thus $\rho_D$ has a
		local holomorphic section through $[h]$.  More explicitly, if
		$\gamma:S'\to\mathcal J_D$ is a holomorphic family of jets and
		$\gamma(s'_0)=\rho_D([h])$, then the fiber product
		\[
		\mathcal H_\gamma=\mathcal H\mathbin{\times}_{\mathcal J_D}S'
		\]
		is smooth over $S'$ near $([h],s'_0)$.
	\end{theorem}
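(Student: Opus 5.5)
The plan is to reduce Theorem~\ref{thm:rel-hom-deformation} to the standard representability and deformation theory of relative Douady spaces, working locally on $S$.

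\textbf{Representability.} Projectivity of $\pi$ is local on the base (Definition~\ref{def:rel-smooth-projective}), so I first cover $S$ by opens $U$ with closed embeddings $X|_U\hookrightarrow U\times\mathbb P^N$. Over such a $U$, identify a morphism $C\times T\to X$ over $S$ with its graph. The graph is a closed subspace of $C\times X_T$, proper and flat over $T$, whose projection to $C\times T$ is an isomorphism. By Pourcin's relative Douady space for proper morphisms, the functor of such flat proper families is represented. The locus where the projection to $C\times T$ is an isomorphism is open, so it is represented by an open subspace $\mathcal H_U$. The same construction with $D$ in place of $C$ gives $\mathcal J_D|_U$; because $D$ is finite, one may alternatively describe it as a relative jet (Weil restriction) space, a locally trivial bundle over $X$ and hence over $S$. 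Representability makes the local pieces unique up to unique isomorphism, so they glue. Compatibility with base change and the universal evaluations are formal consequences of the functorial definition. Restriction along $D\times T\subset C\times T$ is a natural transformation, so by Yoneda $\rho_D$ is holomorphic.

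\textbf{Smoothness of $\mathcal J_D$.} A morphism $D\times T\to X$ over $S$ is, locally on $X$, a $T$-family of $\mathcal O_D$-valued points. Since $\pi$ is a submersion, $X$ is locally $S\times$ (a ball). Near any point, $\mathcal J_D$ is therefore the product of $S$ with the space of maps $D\to$ ball. That space is an open subset of $\C^{n\cdot\operatorname{length}D}$, where $n$ is the fiber dimension. Hence $\mathcal J_D$ is smooth over $S$.

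\textbf{Tangent and obstruction spaces.} Consider a small extension $T\subset T'$ of Artinian bases with ideal $I$, a map $h_T$ over $T$, and a lift prescribed on $D\times T'$ and on $T'\to S$. Locally on $C$, lifts exist by smoothness of $\pi$. Two local lifts differ by a section of $E\otimes I$, and lifts that agree on $D$ differ by a section of $E(-D)\otimes I$. Comparing lifts on a Čech cover gives an obstruction class in $H^1(C,E(-D))\otimes I$; when it vanishes, the lifts form a torsor under $H^0(C,E(-D))\otimes I$. With $D=\varnothing$ this gives the statements for $\mathcal H\to S$.

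\textbf{Consequences and main obstacle.} Assume $H^1(C,E(-D))=0$. The infinitesimal lifting criterion, applied to the germs of the analytic spaces, then shows that $\rho_D$ is smooth at $[h]$. Because $\mathcal J_D$ is smooth over $S$, the space $\mathcal H$ is smooth over $S$ there. Since $H^1(C,E)$ is a quotient of $H^1(C,E(-D))$ on a curve, it also vanishes. Whether $\mathcal H$ is nonsingular in the absolute sense depends on $S$; for a complex manifold $S$ it is. A smooth morphism of germs is locally a projection, which yields local holomorphic sections. Smoothness is preserved by base change, so the fiber product $\mathcal H_\gamma$ is smooth over $S'$ near $([h],s'_0)$.

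I expect the main obstacle to be the passage from formal to analytic smoothness. The infinitesimal criterion for complex-space germs requires lifting over all Artinian extensions, and the conclusion for convergent germs follows from Artin approximation or Grauert's comparison. I would cite the analytic version of Grothendieck's smoothness criterion for this step. A second technical point is to state the gluing and base-change statements carefully, so that nonreduced $T$ are handled.
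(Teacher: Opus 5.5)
Your proposal is correct and follows essentially the paper's route: Pourcin's relative Douady space restricted to the open locus where the graph projects isomorphically onto $C\times T$, the product-chart description of $\mathcal J_D$, and the \v Cech obstruction in $H^1(C,E(-D))\otimes I$. The paper applies Pourcin directly to $C\times X\to S$, so your localization via projectivity is harmless but unnecessary. It also makes explicit a step you leave implicit: local lifts are adjusted by Taylor-polynomial lifts so that they match the prescribed jet on $D$. For the passage from formal to analytic smoothness, the paper does not cite a general analytic smoothness criterion and needs no Artin approximation. Since $\mathcal J_D$ is nonsingular, relative lifting gives absolute small-extension lifting for $(\mathcal H,[h])$. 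An elementary least-order-relation argument in a minimal embedding then shows this germ is nonsingular, dual-number points give surjectivity of $d\rho_D$, and the holomorphic submersion theorem finishes.
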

	
	\begin{proof}
		The proof has three steps: analytic representability, the relative
		\v Cech obstruction calculation, and the analytic submersion criterion.
		Pourcin's relative Douady theorem~\cite[Th\'eor\`eme~2]{Pourcin69},
		applied to $C\times X\to S$ and the structure sheaf, gives a relative
		Douady space $\mathcal Q\to S$ and a universal proper flat family
		\[
		\mathcal U\subset \mathcal Q\mathbin{\times_S}(C\times X).
		\]
		Let
		\[
		q:\mathcal U\longrightarrow \mathcal Q\times C
		\]
		be the projection.  Both $\mathcal U$ and $\mathcal Q\times C$ are proper and flat over
		$\mathcal Q$.  By Douady~\cite[\S10.1, Proposition~1]{Douady66},
		the set of $u\in\mathcal Q$ for which $q_u$ is an isomorphism is open, and
		$q$ itself is an isomorphism after restriction to that open set.  Denote
		the open set by $\mathcal H$.  Writing $\mathcal U_{\mathcal H}$ for the restriction of $\mathcal U$,
		the universal evaluation is
		\[
		\operatorname{ev}(c,h')
		=\operatorname{pr}_X\bigl((q|_{\mathcal U_{\mathcal H}})^{-1}(h',c)\bigr),
		\qquad (c,h')\in C\times\mathcal H.
		\]
		
		We verify the representing property, including nonreduced test spaces.
		For every analytic space $T\to S$, a $T$-point of $\mathcal H$ pulls the
		universal family back to a graph and hence gives an $S$-map
		$C\times T\to X$.  Conversely, the graph of any such map is a closed
		analytic subspace of $T\mathbin{\times_S}(C\times X)$, isomorphic to
		$C\times T$ and therefore proper and flat over $T$.  Pourcin's universal
		property gives a unique classifying map $c_T:T\to\mathcal Q$.  Since the
		pullback of $q$ to $T$ is an isomorphism, $c_T$ factors through the open set
		$\mathcal H$.  The two operations are inverse and functorial in $T$.
		The correspondence between maps and graphs is precisely the argument of
		Douady~\cite[\S10.2, Th\'eor\`eme~1]{Douady66}, now applied to the
		relative Douady space.  Replacing $C$ by the compact, possibly nonreduced space $D$ in the
		relative Douady construction and taking the open locus of graphs gives
		$\mathcal J_D$.  Pourcin works with analytic
		spaces that need not be reduced, and
		\cite[\S5, Remark~3]{Pourcin69} gives compatibility with arbitrary
		analytic base change.  Thus the asserted functors, universal evaluations,
		and restriction morphism have the stated strength.  No algebraicity of
		$S$ is used here.
		
		Write $D=\sum_{i=1}^{\ell}m_i a_i$ and let $n$ be the relative dimension of $X/S$.
		Since $\pi$ is a submersion, relative coordinates near the finitely many
		image points identify a neighborhood in $\mathcal J_D$ with a neighborhood
		in
		\[
		S_0\times\prod_i\mathbb C^{n m_i},
		\]
		where $S_0\subset S$ is a sufficiently small neighborhood of the base
		point;
		the factors record the coefficients modulo $w_i^{m_i}$ in local coordinates
		$w_i$ centered at $a_i$.  Thus $\mathcal J_D\to S$ is smooth, including
		when $D$ is nonreduced.
		
		We next compute the obstruction, allowing both a moving base and a
		nonreduced jet condition. We regard the spectrum of a local Artinian
		$\mathbb C$-algebra as its associated one-point analytic space.
		Let $A'\twoheadrightarrow A$ be a small
		extension of local Artinian $\mathbb C$-algebras with residue field
		$\mathbb C$ and kernel $I$, with the maximal ideal of $A'$ annihilating
		$I$. Fix a base point $s_A:\operatorname{Spec}A\to S$ and a deformation
		$h_A:C\times\operatorname{Spec}A\to X$ of $h$ over $s_A$.
		Prescribe a base-point lift $s_{A'}:\operatorname{Spec}A'\to S$ and a
		jet lift $j_{A'}:D\times\operatorname{Spec}A'\to X$ over $s_{A'}$.
		These lifts must reduce to $s_A$ and $h_A|_D$, respectively.
		Choose a finite Leray cover $U_1,\ldots,U_N$ of $C$ by coordinate disks
		such that each $h(U_i)$ lies in a relative coordinate chart for $X\to S$.
		For $1\leq i\leq\ell$, take $U_i$ centered at $a_i$ and containing no
		other point of $\operatorname{supp}D$; the remaining disks avoid
		$\operatorname{supp}D$.
		
		In the relative charts chosen for the Leray cover, the component functions of the given local
		maps lift from $A$ to $A'$.  On a disk centered at $a_i$, the difference
		between such a lift and the prescribed map on $m_i a_i$ is $I$-valued.
		The restriction
		\[
		\mathcal O(U_i)\longrightarrow
		\mathbb C[w_i]/(w_i^{m_i})
		\]
		is onto: a class is lifted by its Taylor polynomial.  Let $\delta_i$ be the $I$-valued discrepancy on $m_i a_i$, and let
		$\widetilde\delta_i$ be a Taylor-polynomial lift of $\delta_i$ on $U_i$.
		Subtract $\widetilde\delta_i$ from the chosen local lift of $h_A$.
		The adjusted map agrees with $j_{A'}$ on $m_i a_i$ and still reduces to
		$h_A$, because $\widetilde\delta_i$ has coefficients in $I$.
		
		On an overlap, two adjusted local lifts agree modulo $I$.  Because
		$I^2=0$, the difference of the two lifts transforms linearly as a vertical derivation in
		$E\otimes_{\mathbb C}I$.  Writing $c_{ij}$ for the lift on $U_j$ minus the lift
		on $U_i$, one has $c_{ij}+c_{jk}+c_{ki}=0$ on triple overlaps.  Since both local maps agree with the prescribed map on
		$D$, the cocycle vanishes to order $m_i$ at every $a_i$ and hence has
		coefficients in $E(-D)\otimes_{\mathbb C}I$.  Changing the local lifts changes $(c_{ij})$ by a coboundary.  The class
		\[
		o=[(c_{ij})]\in H^1(C,E(-D))\otimes_{\mathbb C}I
		\]
		is therefore well defined; the Leray property identifies the displayed
		\v Cech group with sheaf cohomology.  The equality $o=0$ means that $c_{ij}=b_j-b_i$ for local sections
		$b_i$ of $E(-D)\otimes I$.  Subtracting $b_i$ from the lift on $U_i$
		makes all adjusted lifts agree on overlaps.  Conversely, global gluing
		makes the difference cocycle a coboundary.
		When a lift exists, all lifts with the same base point and $D$-restriction
		form a torsor under $H^0(C,E(-D))\otimes I$.  Thus $o$ is a complete
		obstruction for every small extension.  Taking dual numbers proves the
		tangent assertion, and taking $D$ empty gives the unconstrained assertions.
		
		The infinitesimal lifting property implies an analytic submersion as
		follows.  If
		$H^1(C,E(-D))=0$, the preceding calculation gives the relative
		small-extension lifting property for $\rho_D$.  Given an absolute lifting
		problem for $(\mathcal H,[h])$, first lift the composite of the given Artinian map with $\rho_D$ to $\mathcal J_D$,
		which is possible since $\mathcal J_D$ is nonsingular, and then use the
		relative lifting property.  Thus $(\mathcal H,[h])$ has the absolute
		small-extension lifting property.  The absolute lifting property forces $(\mathcal H,[h])$ to be nonsingular:
		write the local algebra of this germ in a minimal embedding of dimension $e$ as
		\[
		B=\mathbb C\{t_1,\ldots,t_e\}/J,
		\qquad J\subset\mathfrak m^2,
		\qquad \mathfrak m=(t_1,\ldots,t_e).
		\]
		If $J\ne0$, choose a relation of least order $d\ge2$ among all nonzero
		relations.  The natural map from $B$ to
		$\mathbb C\{t\}/\mathfrak m^d$ would lift to
		$\mathbb C\{t\}/\mathfrak m^{d+1}$.  Any such lift sends $t_i$ to
		$t_i$ plus a term of degree $d$, so substitution preserves the nonzero homogeneous degree-$d$ part
		of the chosen relation.  This contradiction gives $J=0$.
		Finally, lifting arbitrary dual-number points of $\mathcal J_D$ through
		$[h]$ says that $d\rho_D$ is surjective.  The holomorphic submersion
		theorem now gives a local section through $[h]$.  Locally, $\rho_D$ has the form $V\times\mathbb B^k\to V$, where $\mathbb B^k$ is a ball
		in $\mathbb C^k$ and $V$ is open in $\mathcal J_D$.  Pullback by $\gamma$ gives
		$\gamma^{-1}(V)\times\mathbb B^k\to\gamma^{-1}(V)$, proving the asserted
		smoothness of $\mathcal H_\gamma\to S'$.
	\end{proof}
	
	\begin{remark}
		\label{rem:rel-first-jet-base-change}
		Let $p:X\to S$ have the relative first-jet sphere-family property.
		This property is preserved by restriction to an open subset of the base
		and by holomorphic base change $g:S'\to S$ between complex manifolds.
		For $X'=S'\times_S X$, one has $T_{X'/S'}\simeq S'\times_S T_{X/S}$.
		Given one of its families $G:\Pone\times\Omega\to X$, the pulled-back family is
		\[
		G'(w;s',x,b)=(s',G(w;x,b)),\qquad g(s')=p(x).
		\]
		The domain of $G'$ is $\Pone\times\Omega'_0$, where
		\[
		\Omega'_0
		=\{(s',x,b):(x,b)\in\Omega,\ g(s')=p(x)\}
		\subset T_{X'/S'}.
		\]
		The identities for $G'$ follow componentwise from \eqref{eq:rel-sphere-identities}.
		
		For each fixed family $G$ and each relatively compact parameter subdomain $\Omega'\Subset\Omega$, compactness of $\Pone\times\overline{\Omega'}$ gives bounds for source and parameter derivatives of every fixed finite order.  In local coordinates one covers this compact set by finitely many smaller source--parameter coordinate neighborhoods whose closures lie in larger neighborhoods on which $G$ takes values in fixed target charts; the coordinate expressions are holomorphic on the larger neighborhoods, so their derivatives are bounded on the smaller ones.  The bounds may depend on the order and on the finite charts; no single holomorphic choice over an arbitrary compact jet set is asserted.
	\end{remark}
	
	\begin{proposition}
		\label{prop:rel-unobstructed-jet-criterion}
		Let $p:X\to S$ be a smooth projective morphism of complex manifolds.
		Suppose that, for every $s\in S$, $x\in X_s$, and
		$b\in T_{X_s,x}$, there is a morphism $h:\Pone\to X_s$ with
		\[
		h(0)=x,\qquad \partial_wh(0)=b,\qquad
		H^1\bigl(\Pone,h^*T_{X_s}(-2[0])\bigr)=0.
		\]
		Then $p$ has the relative first-jet sphere-family property.
	\end{proposition}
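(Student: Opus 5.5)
The plan is to apply Theorem~\ref{thm:rel-hom-deformation} with $C=\Pone$ and $D=2[0]$. Fix $(x,b)\in T_{X/S}$ with $s=p(x)$, and let $h:\Pone\to X_s$ be the morphism supplied by the hypothesis. Since $h^*T_{X/S}=h^*T_{X_s}$, the vanishing $H^1(\Pone,h^*T_{X_s}(-2[0]))=0$ is exactly the unobstructedness condition in Theorem~\ref{thm:rel-hom-deformation}. Hence $\mathcal H=\Hom_S(\Pone\times S,X)$ is nonsingular at $[h]$, and the restriction $\rho_D:\mathcal H\to\mathcal J_D$ is a holomorphic submersion at $[h]$.

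The key identification is of $\mathcal J_D$ near $\rho_D([h])$ with a neighborhood of $(x,b)$ in $T_{X/S}$. A map $2[0]\times T\to X$ over $S$ is a point together with a vertical tangent vector. Concretely, in relative coordinates $(s,z)$ near $x$, a first jet over the base point $s'$ is $z(w)\equiv z_0+w\,z_1 \bmod w^2$. This gives a holomorphic isomorphism of a neighborhood in $\mathcal J_D$ with a neighborhood $\Omega_0$ of $(x,b)$ in $T_{X/S}$, sending the jet to $(x',b')$ with $x'=(s',z_0)$ and $b'=z_1\,\partial_z$ in the vertical direction. The proof of Theorem~\ref{thm:rel-hom-deformation} already identifies $\mathcal J_D$ locally with $S_0\times\C^{2n}$, and this is the same chart. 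Using the affine coordinate $w$ fixed before Definition~\ref{def:rel-first-jet-spheres} to define the map, the jet of $G(\cdot;x',b')$ at $0$ is then $(x',b')$.

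Next, take a local holomorphic section $\sigma:\Omega\to\mathcal H$ of $\rho_D$ through $[h]$, defined on a neighborhood $\Omega\subset\Omega_0$ of $(x,b)$ with $\sigma(x,b)=[h]$. Such a section is provided by Theorem~\ref{thm:rel-hom-deformation}, either by applying its $\gamma$-formulation with $S'=\Omega$ and $\gamma$ the chart inverse, or directly from the submersion. Then define $G(w;x',b')=\operatorname{ev}(w,\sigma(x',b'))$ using the universal evaluation $\Pone\times\mathcal H\to X$. This $G$ is jointly holomorphic as a composite of holomorphic maps. Because $\mathcal H$ is a space of maps over $S$, the point $\sigma(x',b')$ lies over $p(x')$, which gives $p(G(w;x',b'))=p(x')$. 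Finally, $\rho_D\circ\sigma=\mathrm{id}$ gives $G(0;x',b')=x'$ and $\partial_wG(0;x',b')=b'$, which are the identities \eqref{eq:rel-sphere-identities}.

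The main point requiring care is the jet identification: one must check that the representing space $\mathcal J_D$ for $D=2[0]$ really is $T_{X/S}$ (not the full $TX$), compatibly with how $\rho_D$ reads off $G(0)$ and $\partial_wG(0)$. This follows from the coordinate description, since maps over $S$ have constant base component, so the derivative is vertical. The case $b=0$ needs no separate treatment, since the hypothesis applies to every $b\in T_{X_s,x}$, including $0$.
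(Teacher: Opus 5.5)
Your proposal is correct and matches the paper's proof step for step. Both apply Theorem~\ref{thm:rel-hom-deformation} with $C=\Pone$ and $D=2[0]$, identify $\mathcal J_{2[0]}$ with $T_{X/S}$ in the fixed coordinate $w$, take a local holomorphic section $\sigma$ of the submersion $\rho_{2[0]}$ through $[h]$, and set $G(w;x',b')=\operatorname{ev}(w,\sigma(x',b'))$; your explicit coordinate check of the jet identification only spells out what the paper states in one line.
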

	
	\begin{proof}
		Apply Theorem~\ref{thm:rel-hom-deformation} with $C=\Pone$ and $D=2[0]$.
		In the fixed source coordinate $w$, the relative jet space
		$\mathcal J_D$ is naturally $T_{X/S}$: a map from $2[0]$ records its
		value and vertical derivative.  At each chosen $[h]$, the displayed
		vanishing makes $\rho_{2[0]}$ a holomorphic submersion, with respect to the
		base point, value, and derivative.  It therefore has a
		local holomorphic section $\sigma:\Omega\to\mathcal H$ through $[h]$.
		The universal evaluation map gives the jointly holomorphic family
		\[
		G(w;x',b')=\operatorname{ev}(w,\sigma(x',b')).
		\]
		Since $\rho_{2[0]}\circ\sigma=\id_\Omega$, the restriction of
		$G(\,\cdot\,;x',b')$ to $2[0]$ has value $x'$ and derivative $b'$.
		The evaluation map is over $S$, so $p(G(w;x',b'))=p(x')$.
		This verifies the three identities in \eqref{eq:rel-sphere-identities}.
		The vanishing hypothesis and submersion argument also apply at $b=0$.
	\end{proof}
	\begin{proposition}
		\label{prop:rel-jet-family}
		Let $p:X\to S$ be a smooth projective morphism of complex manifolds with
		rationally connected fibers.  Then $p$ has the relative first-jet sphere-family
		property.  Explicitly, fix $s_0\in S$, $x_0\in X_{s_0}$, and
		$b_0\in T_{X/S,x_0}$.  There are a neighborhood $\Omega$ of
		$(s_0,x_0,b_0)$ in the relative first-jet bundle
		and a jointly holomorphic map
		\[
		G:\Pone\times\Omega\longrightarrow X
		\]
		such that
		\[
		p(G(w;s,x,b))=s,
		\qquad G(0;s,x,b)=x,
		\qquad \partial_w G(0;s,x,b)=b.
		\]
		On relatively compact parameter subsets, the derivatives of $G$ of every
		fixed finite order are uniformly bounded on $\Pone$, measured in fixed
		finite collections of parameter, source, and target charts.
	\end{proposition}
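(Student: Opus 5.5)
The plan is to reduce to Proposition~\ref{prop:rel-unobstructed-jet-criterion}: for every $s\in S$, $x\in X_s$ and $b\in T_{X_s,x}$ we must produce a morphism $h:\Pone\to X_s$ with $h(0)=x$, $\partial_wh(0)=b$ and $H^1(\Pone,h^*T_{X_s}(-2[0]))=0$. The fiber $X_s$ is a connected smooth projective rationally connected manifold (a fiber of a smooth projective morphism is smooth projective, and connectedness is part of rational connectedness as understood in the paper). We apply Theorem~\ref{thm:rc-facts}(i) to $X_s$ with $\ell=1$, $p_1=0$, $m_1=2$, and the prescribed jet of length $2$ given by the value $x$ and the derivative $b$ in the coordinate $w$. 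This gives $h$ with exactly the required vanishing, since $h^*TX_s(-2\cdot 0)$ is the bundle appearing in the criterion. The case $b=0$ is covered because Theorem~\ref{thm:rc-facts}(i) allows arbitrary jets, possibly with $h$ nonconstant. Proposition~\ref{prop:rel-unobstructed-jet-criterion} then yields the relative first-jet sphere-family property.

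The explicit form is read off from that proof. Identify $\mathcal J_{2[0]}$ with $T_{X/S}$, so a point is $(s,x,b)$ with $s=p(x)$. The local holomorphic section $\sigma:\Omega\to\mathcal H$ of $\rho_{2[0]}$ through $[h]$ then gives $G(w;s,x,b)=\operatorname{ev}(w,\sigma(s,x,b))$. The identities $p\circ G=s$, $G(0)=x$ and $\partial_wG(0)=b$ follow from the fact that $\operatorname{ev}$ is a map over $S$ and from $\rho_{2[0]}\circ\sigma=\id$. Writing the base point $s$ explicitly is only notational, because $s=p(x)$ on $T_{X/S}$.

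For the derivative bounds, we shrink $\Omega$ to a relatively compact parameter subset $\Omega'\Subset\Omega$ and invoke the compactness argument of Remark~\ref{rem:rel-first-jet-base-change}. We cover $\Pone\times\overline{\Omega'}$ by finitely many product coordinate neighborhoods whose enlargements are mapped by $G$ into fixed target charts. By Cauchy estimates on the enlargements, each fixed-order derivative is bounded on the smaller neighborhoods.

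The main obstacle is only bookkeeping. One point is checking that the jet space identification $\mathcal J_{2[0]}\cong T_{X/S}$ is holomorphic in all three variables $(s,x,b)$. This follows from relative coordinates, as in the smoothness of $\mathcal J_D$ in Theorem~\ref{thm:rel-hom-deformation}. The other point is that the twisted bundle in Theorem~\ref{thm:rc-facts}(i) is the one in the criterion. Neither requires new input beyond the cited results.
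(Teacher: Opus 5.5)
Your proposal is correct and is essentially the paper's proof. You apply Theorem~\ref{thm:rc-facts}(i) in the fiber with a single length-two jet at $0$ (which covers $b=0$) to get $h$ with $H^1(\Pone,h^*T_{X_s}(-2[0]))=0$, obtain $G$ from a local section of $\rho_{2[0]}$ via Proposition~\ref{prop:rel-unobstructed-jet-criterion}, and get the derivative bounds from the compactness argument in Remark~\ref{rem:rel-first-jet-base-change}, exactly as the paper does.
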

	
	\begin{proof}
		Theorem~\ref{thm:rc-facts}(i), applied in the fiber $X_{s_0}$ to one jet of
		length two, gives a morphism $h:\Pone\to X_{s_0}$ with the prescribed
		value and derivative, including the zero derivative, and
		\[
		H^1\bigl(\Pone,h^*T_{X_{s_0}}(-2[0])\bigr)=0.
		\]
		Proposition~\ref{prop:rel-unobstructed-jet-criterion} gives the family
		$G$.  The derivative bounds follow from
		Remark~\ref{rem:rel-first-jet-base-change}.
	\end{proof}
	
	\begin{proof}[Proof of Theorem~\ref{thm:rel-main}]
		By Definition~\ref{def:rel-smooth-projective}, $\pi$ is proper and a
		holomorphic submersion; it is surjective by the nonempty-fiber convention.
		Proposition~\ref{prop:rel-jet-family} gives the relative first-jet sphere-family
		property, and Theorem~\ref{thm:rc-facts}(ii) gives simple connectivity of
		the fibers.  Together with their assumed connectedness, these are all
		the hypotheses of Theorem~\ref{thm:rel-analytic-criterion}.  That theorem
		gives the Oka-1 map property and the stated closed discrete interpolation
		conclusion.
	\end{proof}
	
	\section{The fixed section equation}\label{sec:vertical-preparation}
	
	\subsection{Vertical equations and jet twisting}
	
	Fix a proper holomorphic submersion $\pi:Z\to Y$, a Riemann surface $R$,
	and a holomorphic map $g:R\to Y$. Put
	\[
	X=R\times_Y Z,\qquad p:X\longrightarrow R.
	\]
	Then $p$ is a proper holomorphic submersion. Write $X_z=p^{-1}(z)$ for its
	fiber over $z$. Its vertical tangent bundle is
	\[
	T_{X/R}=\ker(dp:TX\to p^*TR).
	\]
	The correspondence between liftings and sections is
	\[
	f:R\to Z\quad\longleftrightarrow\quad s_f:R\to X,\qquad
	s_f(z)=(z,f(z)),\quad f=\operatorname{pr}_Z\circ s_f.
	\]
	Equip $X$ with the metric induced from a product Hermitian metric on
	$R\times Z$. For points $x_1,x_2\in X$ over the same point of $R$,
	\[
	d_Z(\operatorname{pr}_Zx_1,\operatorname{pr}_Zx_2)\leq d_X(x_1,x_2),
	\]
	since projection to $Z$ does not increase lengths. The local constructions
	below also apply near a compact image of a section lying in the
	submersion locus of a holomorphic map.
	
	To express nearby sections in one bundle, we use a local addition that
	stays in each fiber and is holomorphic in its vector parameter.
	\begin{lemma}
		\label{lem:rel-addition}
		Let $D\subset R$ be compact.  There are an open neighborhood $\mathcal U\subset T_{X/R}$ of the
		zero vectors over $p^{-1}(D)$ and a smooth map
		\[
		\Psi:\mathcal U\longrightarrow X
		\]
		with the following properties:
		\begin{enumerate}[label=\textup{(\roman*)},leftmargin=2.2em]
			\item $p(\Psi(x,v))=p(x)$ and $\Psi(x,0)=x$;
			\item for fixed $x$, the map $v\mapsto\Psi(x,v)$ is holomorphic and its
			derivative at zero is the identity of $T_{X/R,x}$;
			\item after the fiber neighborhood is uniformly shrunk, the fiber map is a
			biholomorphism onto a neighborhood of $x$ inside $X_{p(x)}$;
			\item all derivatives through order two in the variables $x$ and order
			three in the variables $v$ are uniformly bounded on smaller fiber
			neighborhoods.
		\end{enumerate}
	\end{lemma}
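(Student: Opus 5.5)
We will build $\Psi$ from finitely many local holomorphic fiber charts glued by a partition of unity on the base, and then correct the glued map so that it is exactly holomorphic in $v$.

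\textbf{Local charts.} Cover $p^{-1}(D)$, which is compact because $p$ is proper, by finitely many open sets $V_\alpha\subset X$. On each $V_\alpha$ choose holomorphic coordinates $(z,\zeta)\in\C\times\C^n$ with $p(z,\zeta)=z$; these exist because $p$ is a holomorphic submersion. In such a chart the naive addition $\Psi_\alpha((z,\zeta),v)=(z,\zeta+v)$, with $v$ written in the vertical frame $\partial_\zeta$, is holomorphic in all variables. It satisfies (i)--(iii) on a uniform neighborhood of a slightly shrunk compact piece.

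\textbf{Gluing.} A convex combination $\sum\chi_\alpha\Psi_\alpha$ of points of $X$ has no intrinsic meaning, so we glue differently. Choose a smooth fiberwise Hermitian metric $h$ on $T_{X/R}$ near $p^{-1}(D)$, say from the given metric on $X$. Let $\exp^h$ be the exponential map of the restricted metric on each fiber $X_z$. It is smooth in $(x,v)$ and satisfies (i), (iii), and the derivative condition in (ii), but it is not holomorphic in $v$.

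\textbf{Holomorphic correction.} To repair holomorphy, replace $\exp^h_x$ by its holomorphic Taylor part in normal coordinates:
\[
\Psi(x,v)=\Phi_x\bigl(v\bigr),
\]
where $\Phi_x$ is a local holomorphic chart of $X_{p(x)}$ centered at $x$ whose differential at $x$ is the identity. We want $\Phi_x$ to depend smoothly on $x$. We get it as follows.

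Put $\Phi_x=\Psi_\alpha(x,\cdot)$ on $V_\alpha$. These local choices differ by fiberwise biholomorphisms $\phi_{\alpha\beta}(x,\cdot)$ tangent to the identity at $0$, and each $\phi_{\alpha\beta}$ is holomorphic in all variables. In the fixed chart $\alpha$, set
\[
\Phi_x=\Psi_\alpha\Bigl(x,\sum_\beta\chi_\beta(x)\,\phi_{\alpha\beta}(x,v)\Bigr)
\]
using a partition of unity $\chi_\beta$ on $X$ subordinate to the cover. The map $v\mapsto\sum_\beta\chi_\beta(x)\phi_{\alpha\beta}(x,v)$ is holomorphic in $v$ for fixed $x$, because the weights depend only on $x$. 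Its derivative at $0$ is $\sum_\beta\chi_\beta\,\id=\id$.

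\textbf{Consistency of the definition.} We must check that this definition does not depend on $\alpha$. The transition from $\alpha$ to $\gamma$ is composition with $\phi_{\gamma\alpha}$, which is holomorphic but not affine, so it does not commute with the convex sum. This is the main obstacle.

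The fix is to linearize. Each $\phi_{\alpha\beta}(x,\cdot)$ is a fiber biholomorphism fixing $0$ with derivative $\id$. Replace the convex combination of maps by a convex combination taken in one reference chart per point. For each $x$, choose the chart index $\alpha(x)$ by a fixed ordering, say the chart in which $x$ lies deepest. Since this choice is discontinuous, we instead glue on the level of sections over the base.

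Concretely, take the fiberwise averaging only in the $R$-direction. Over a small disk $B\subset R$, the proper submersion $p^{-1}(B)\to B$ is smoothly trivial by Ehresmann's theorem, so on each fiber $X_z$ we can use a holomorphic tubular structure of the diagonal in $X_z\times X_z$. A smooth family, in $z$, of holomorphic maps $\Phi$ from a neighborhood of the zero section of $T_{X_z}$ into $X_z$, fiberwise holomorphic and tangent to the identity along the zero section, exists. It is obtained as the $z$-family of compositions of the holomorphic chart maps with a fixed smooth family of fiber identifications; holomorphy in $v$ is preserved because we compose only with holomorphic-in-$v$ maps of fixed base point $x$.

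\textbf{Derivative bounds.} Once $\Psi$ is constructed on a neighborhood of the zero section over the compact set $p^{-1}(D)$, property (iv) follows from compactness after shrinking the fiber neighborhood. Property (iii) follows from the inverse function theorem applied to $v\mapsto\Psi(x,v)$, with a radius that is uniform by continuity in $x$ over the compact set $p^{-1}(D)$.
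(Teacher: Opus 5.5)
You correctly locate the obstacle: $\Psi_\alpha\bigl(x,\sum_\beta\chi_\beta(x)\phi_{\alpha\beta}(x,v)\bigr)$ depends on the reference chart $\alpha$, because changing $\alpha$ post-composes with the nonlinear map $\phi_{\gamma\alpha}$, and that does not commute with convex sums. The proposed repair, however, does not close this gap.

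Localizing over a small disk in $R$ does not address the difficulty. It already occurs inside one compact fiber $X_z$, where $x$ must move through several holomorphic charts. A ``holomorphic tubular structure of the diagonal in $X_z\times X_z$'' is one of two things. If it is jointly holomorphic, it generally does not exist: its second-order Taylor term along the zero section would define a holomorphic torsion-free connection on $TX_z$, which is impossible, e.g., on $\mathbb P^n$. If it is holomorphic only in the second variable, it is exactly the object you are trying to construct. Moreover, composing holomorphic charts with Ehresmann's smooth fiber identifications destroys holomorphy on the neighboring fibers. So the claim that holomorphy in $v$ is preserved is circular, and no globally defined $\Psi$ has been produced.

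The missing idea is to average the \emph{inverse} charts, whose values lie in the fixed vector space $T_{X/R,x}$. This is what the paper does. With relative charts $(z,\phi_i)$ and a partition of unity $\rho_i$ on $X$, set
\[
\Theta_x(y')=\sum_i\rho_i(x)\,\bigl((d_y\phi_i)_x\bigr)^{-1}\bigl(\phi_i(y')-\phi_i(x)\bigr)\in T_{X/R,x}.
\]
This is globally defined for $y'$ near $x$ in $X_{p(x)}$. It is holomorphic in $y'$ because the weights depend only on $x$, and it satisfies $\Theta_x(x)=0$ and $d\Theta_x(x)=\id$.

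In your notation, with $\phi_{\alpha\beta}=\Psi_\alpha(x,\cdot)^{-1}\circ\Psi_\beta(x,\cdot)$, this amounts to setting $\Psi(x,\cdot)=\Psi_\alpha(x,\cdot)\circ\bigl(\sum_\beta\chi_\beta(x)\phi_{\beta\alpha}(x,\cdot)\bigr)^{-1}$. Replacing $\alpha$ by $\gamma$ now replaces each $\phi_{\beta\alpha}$ by $\phi_{\beta\alpha}\circ\phi_{\alpha\gamma}$. That is a \emph{pre}composition, which commutes with convex sums, so the result is independent of $\alpha$.

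The remaining steps are as follows. The map $\Gamma(x,y')=(x,\Theta_x(y'))$ has block-triangular derivative with identity blocks along the relative diagonal. A sequence argument over the compact set $p^{-1}(D)$ shows it is injective on a uniform neighborhood of that diagonal. One then defines $\Psi$ by $\Gamma^{-1}(x,v)=(x,\Psi(x,v))$, and holomorphy of $\Psi(x,\cdot)=\Theta_x^{-1}$ follows from the holomorphic inverse function theorem. With that in place, your closing remarks on (iii) and (iv) go through.
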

	
	\begin{proof}
		Put $K_0=p^{-1}(D)$, which is compact because $p$ is proper.  Cover $K_0$ by
		finitely many relative holomorphic coordinate charts
		\[
		(z,y):U_i\longrightarrow V_i\times W_i\subset\C\times\C^n,
		\qquad p(z,y)=z,
		\]
		where $n$ is the fiber dimension. We average coordinate differences in the vertical tangent space at $x$,
		using weights that depend only on $x$.  Choose a
		smooth partition of unity $\rho_i$ on a neighborhood of
		$K_0$, subordinate to the relative coordinate cover, and put
		$K_i=\operatorname{supp}\rho_i\Subset U_i$.  We first specify the domain
		of every summand.  In the fiber product $X\times_R X$, put
		\[
		\theta_i(x,y')=\rho_i(x)
		\bigl((d_y\phi_i)_x\bigr)^{-1}
		\bigl(\phi_i(y')-\phi_i(x)\bigr)
		\]
		when $x,y'\in U_i$, where $\phi_i$ is the vertical coordinate, and set
		$\theta_i=0$ when $x\notin K_i$.  The two formulas agree on
		$(U_i\times_RU_i)\cap((X\setminus K_i)\times_RX)$ because $\rho_i(x)=0$
		on that intersection.  Thus $\theta_i$ is smooth on the union
		\[
		(U_i\times_RU_i)\ \cup\ ((X\setminus K_i)\times_RX).
		\]
		This union contains the relative diagonal: if
		$x\in K_i$, then $x\in U_i$, and otherwise the zero definition applies.
		Intersecting these finitely many domains gives an open neighborhood
		$\mathcal V$ of the relative diagonal over $K_0$ on which
		\[
		\Theta_x(y')=\sum_i\theta_i(x,y')\in T_{X/R,x}
		\]
		is well defined and smooth.  For fixed $x$, $\Theta_x$ is holomorphic in $y'$, and
		\[
		\Theta_x(x)=0,
		\qquad (d_{y'}\Theta_x)_{y'=x}=
		\sum_i\rho_i(x)\id=
		\id.
		\]
		
		Consider
		\[
		\Gamma:\mathcal V\longrightarrow T_{X/R},
		\qquad \Gamma(x,y')=(x,\Theta_x(y')).
		\]
		Along the relative diagonal, the derivative of $\Gamma$ is block triangular: the first
		diagonal block is the identity in the $x$-variable and the second is the
		displayed identity in the vertical $y'$-variable.  Hence $\Gamma$ is a
		local diffeomorphism near every point of that diagonal.  Compactness of
		$K_0$, together with the fact that $\Gamma$ retains the first component
		$x$, allows one to shrink to a single neighborhood of the diagonal on
		which $\Gamma$ is injective.  Otherwise there would be pairs
		$(x_j,y_j)\ne(x_j,y'_j)$ approaching the diagonal, with
		$\Gamma(x_j,y_j)=\Gamma(x_j,y'_j)$.  After taking a subsequence,
		$x_j\to x\in K_0$ and $y_j,y'_j\to x$.  Both pairs then lie in a
		neighborhood where $\Gamma$ is injective, a contradiction.
		
		The inverse has the form $(x,v)\mapsto(x,\Psi(x,v))$.  Shrinking once more,
		there is a number $\rho>0$ such that the closed metric ball bundle
		$\{(x,v):x\in K_0,\ |v|\leq2\rho\}$ lies in the domain of $\Gamma^{-1}$.  We work on
		$|v|<\rho$, leaving a fixed margin for the derivative estimates.
		
		\noindent\textup{(i)} The map $\Gamma$ preserves the first component, so
		its inverse satisfies $p(\Psi(x,v))=p(x)$.  Since $\Theta_x(x)=0$,
		one also has $\Psi(x,0)=x$.
		
		\noindent\textup{(ii)} For fixed $x$, the holomorphic inverse-function
		theorem makes $\Psi(x,\cdot)$ holomorphic.  The identity
		$(d_{y'}\Theta_x)_x=\id$ gives $d_v\Psi(x,0)=\id$.
		
		\noindent\textup{(iii)} On the neighborhood already chosen, $\Gamma$
		is injective and a local diffeomorphism.  Its restriction to each fiber
		and the corresponding restriction of its inverse are holomorphic;
		thus $\Psi(x,\cdot)$ is a biholomorphism onto a neighborhood of $x$.
		
		\noindent\textup{(iv)} The inverse is smooth jointly in $(x,v)$.
		On smaller closed balls, a finite coordinate cover and compactness bound
		all mixed derivatives, including the orders in (iv).
	\end{proof}
	
	Given a local addition as in Lemma~\ref{lem:rel-addition}, let $s$ be a
	smooth section of $p$ on a neighborhood of a compact bordered Riemann
	surface.  Put
	\[
	E=s^*T_{X/R}.
	\]
	For a small section $u$ of $E$, put
	\begin{equation}\label{eq:rel-normalized-operator}
		\mathscr S_s(u)=
		(d_u\Psi(s(z),u))^{-1}
		\dbar\bigl[\Psi(s(z),u(z))\bigr].
	\end{equation}
	Here $d_u\Psi(s(z),u(z))$ is an isomorphism
	$E_z\to T_{X/R,\Psi(s(z),u(z))}$.  The derivative
	$\dbar[\Psi(s(z),u(z))]$ is vertical because
	$p(\Psi(s(z),u(z)))=z$.  Thus $\mathscr S_s(u)$ is an $E$-valued
	$(0,1)$-form on the domain of $s$.
	
	\begin{lemma}
		\label{lem:rel-normal-form}
		In a smooth complex frame of $E$,
		\[
		\mathscr S_s(u)=\dbar u+A(z,u),
		\]
		where $A$ is smooth in $z$ and holomorphic in $u$, and $A$ does not depend on
		derivatives of $u$.  The linearization
		\[
		D_s=d\mathscr S_s(0)
		\]
		is a complex-linear Cauchy--Riemann operator on $E$.  We write $E_{D_s}$ for
		$E$ endowed with the holomorphic structure defined by $D_s$.  In a
		$D_s$-holomorphic frame,
		\[
		A_u(z,0)=0.
		\]
		If $s$ is holomorphic on an open set $U$, then $A(z,0)=0$ for $z\in U$.
	\end{lemma}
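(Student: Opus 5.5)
Fix a smooth complex frame $e_1,\dots,e_n$ of $E$ on a coordinate disk with holomorphic coordinate $z$, and identify $u$ with its coefficient vector in $\C^n$. The plan is to compute $\mathscr S_s(u)$ with the chain rule for $\dbar$, show that the $\dbar u$ term appears with coefficient exactly the identity, and then read off the remaining properties.

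Write $\Phi(z,u)=\Psi(s(z),u)$, a smooth map into $X$ with $p(\Phi(z,u))=z$. By Lemma~\ref{lem:rel-addition}(ii) it is holomorphic in $u$ for fixed $z$; in the frame, $u\mapsto\sum_iu_ie_i(z)$ is complex linear, so holomorphy in $u$ persists. For a section $u(z)$ the chain rule gives
\[
\dbar[\Phi(z,u(z))]=(\dbar_z\Phi)(z,u(z))+d_u\Phi(z,u(z))\,\dbar u+\overline{\partial}_{\bar u}\text{-terms}\,\overline{\partial u},
\]
and the last group vanishes because $\Phi$ is holomorphic in $u$. The first term is vertical because $p\circ\Phi=z$ is holomorphic in $z$, so $\dbar(p\circ\Phi)=0$. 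Here $d_u\Phi$ means the derivative in the frame coordinates, which is $d_u\Psi$ composed with the frame map. Applying $(d_u\Psi)^{-1}$, in frame coordinates, gives
\[
\mathscr S_s(u)=\dbar u+A(z,u),\qquad A(z,u)=(d_u\Phi(z,u))^{-1}\dbar_z\Phi(z,u).
\]
The term $A$ is smooth in $z$ and contains no derivatives of $u$. It is holomorphic in $u$ because $d_u\Phi$ is holomorphic in $u$. The main point to check is that $\dbar_z\Phi$ is holomorphic in $u$. Since $\Phi$ is jointly smooth and holomorphic in $u$, the derivatives $\partial_{\bar z}$ and $\partial_{\bar u}$ commute, so $\partial_{\bar u}\partial_{\bar z}\Phi=\partial_{\bar z}\partial_{\bar u}\Phi=0$. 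This computation takes place in local target charts, and the invertibility of $d_u\Phi$ comes from Lemma~\ref{lem:rel-addition}(iii).

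Linearizing at $u=0$ gives $D_su=\dbar u+A_u(z,0)u$. The coefficient $A_u(z,0)$ is complex linear in $u$ because $A$ is holomorphic in $u$, so $D_s$ is complex linear. It satisfies $D_s(fu)=\dbar f\otimes u+fD_su$, so by Lemma~\ref{lem:frames} it is a Cauchy--Riemann operator and defines the holomorphic structure $E_{D_s}$.

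Now pass to a $D_s$-holomorphic frame, related to the old frame by a smooth matrix $T$ with $\dbar T=-A_u(\cdot,0)T$. With $u=Tu'$ we get $T^{-1}\mathscr S_s(Tu')=\dbar u'+A'(z,u')$, where $A'(z,u')=T^{-1}(A(z,Tu')+(\dbar T)u')$. The new term $A'$ is still smooth in $z$ and holomorphic in $u'$. Its linear part is $T^{-1}(A_uT+\dbar T)=0$. Equivalently, $D_s=\dbar$ in this frame, because the linearization is frame covariant.

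Finally, if $s$ is holomorphic on $U$, then $\Phi(z,0)=s(z)$. Hence $\dbar_z\Phi(z,0)=\dbar s=0$ on $U$, which gives $A(z,0)=0$ there, and this vanishing is unchanged by the frame change. The only delicate step is the holomorphy of $A$ in $u$, and it rests entirely on the commutation of $\partial_{\bar z}$ with $\partial_{\bar u}$ for the jointly smooth map $\Phi$.
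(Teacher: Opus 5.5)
Your proposal is correct and follows the paper's proof essentially step by step. Both arguments use the chain rule together with holomorphy of $\Psi$ in the fiber variable, then multiply by $\Phi_q^{-1}$ to get $A=\Phi_q^{-1}\partial_{\bar z}\Phi\,d\bar z$. Both then use the transformation law $A'(z,q')=T^{-1}A(z,Tq')+T^{-1}(\dbar T)q'$ to remove the linear term in a $D_s$-holomorphic frame, and $\mathscr S_s(0)=0$ where $s$ is holomorphic. Your explicit check that $\partial_{\bar z}\Phi$ is holomorphic in $u$, by commuting mixed partials, fills in a step the paper leaves implicit.
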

	
	\begin{proof}
		In relative target coordinates, write the vertical coordinate expression of
		$\Psi(s(z),q)$ as $\Phi(z,q)$.  Let $\partial_{\bar z}\Phi(z,q)$ denote differentiation with $q$ fixed.
		Since $\Phi$ is holomorphic in $q$,
		\[
		\partial_{\bar z}\Phi(z,u(z))
		=\partial_{\bar z}\Phi(z,u)
		+\Phi_q(z,u)\partial_{\bar z}u.
		\]
		Multiplication by $\Phi_q^{-1}$ gives the displayed semilinear form with
		\[
		A=\Phi_q^{-1}\partial_{\bar z}\Phi\,d\bar z.
		\]
		The linearization is $D_s=\dbar+A_u(z,0)$ and satisfies the complex
		Leibniz rule; Lemma~\ref{lem:frames} supplies the holomorphic structure
		on $E$.  Under a smooth frame change $q=T(z)q'$, the coefficient becomes
		\[
		A'(z,q')=T^{-1}A(z,Tq')+T^{-1}(\dbar T)q'.
		\]
		A $D_s$-holomorphic frame cancels the linear term.  If $s$ is holomorphic,
		then $\mathscr S_s(0)=0$, so $A(z,0)=0$ on every open set where $s$ is holomorphic.
	\end{proof}
	
	In a general smooth frame, the coefficient $A$ may include a linear term
	and obeys the inhomogeneous transformation law in the proof of
	Lemma~\ref{lem:rel-normal-form}.  The intrinsic zero-order remainder is
	$\mathcal R_s(u)=\mathscr S_s(u)-D_su$.  In $D_s$-holomorphic frames,
	$\mathcal R_s$ is represented by $A$, has zero fiber-linear term,
	and transforms tensorially.  The
	auxiliary extension in the bordered construction applies a cutoff to the remainder $\mathcal R_s$ in holomorphic frames.  For a nonholomorphic section, $D_s$ may depend on the
	chosen local addition; Lemma~\ref{lem:alg-natural-Ds} identifies $D_s$ canonically wherever
	$s$ is holomorphic.
	
	\begin{lemma}
		\label{lem:alg-natural-Ds}
		Let $p:X\to\Sigma$ be holomorphic and let $s$ be a smooth section
		whose image lies in the submersion locus.  Choose a vertical local
		addition near $s(\Sigma)$ and put $D_s=d\mathscr S_s(0)$ by
		\eqref{eq:rel-normalized-operator}.  If $s$ is holomorphic on an open
		set $U\subset\Sigma$, then the
		Cauchy--Riemann operator $D_s|_U$ is the natural Dolbeault operator on the
		holomorphic bundle $s^*T_{X/\Sigma}|_U$.
	\end{lemma}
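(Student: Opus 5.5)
The plan is to compute $D_s$ in local holomorphic coordinates and compare it with the Dolbeault operator; the only input is that $s$ is holomorphic on $U$. By Lemma~\ref{lem:rel-normal-form}, if $\Phi(z,q)$ is the vertical coordinate expression of $\Psi(s(z),q)$ in a smooth frame of $E$ and in relative holomorphic coordinates of $X$, then
\[
D_su=\dbar u+A_u(z,0)u,\qquad A=\Phi_q^{-1}\partial_{\bar z}\Phi\,d\bar z.
\]
Since both sides of the claim are first-order operators satisfying the same Leibniz rule, it is enough to check the identity on the members of one frame. The natural choice is a frame that is holomorphic for the pulled-back structure of $s^*T_{X/\Sigma}|_U$.

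Work near a point $z_0\in U$. Choose relative holomorphic coordinates $(z,y)$ on $X$ near $s(z_0)$ with $p(z,y)=z$. Since $s$ is holomorphic on $U$, the map $z\mapsto y(s(z))$ is holomorphic. Let $e_j(z)=\partial_{y_j}|_{s(z)}$. This is a holomorphic frame of $s^*T_{X/\Sigma}$, and in it the natural Dolbeault operator is the componentwise $\dbar$. In these coordinates, write $\Psi(s(z),\sum q_je_j(z))$ as $\Phi(z,q)$; this is smooth in $z$ and holomorphic in $q$, with $\Phi(z,0)=y(s(z))$ and $\Phi_q(z,0)=\id$ by Lemma~\ref{lem:rel-addition}(ii).

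Next compute $A_u(z,0)=\partial_q\bigl(\Phi_q^{-1}\partial_{\bar z}\Phi\bigr)|_{q=0}$. Expanding gives
\[
\Phi_q^{-1}(z,0)\,\partial_{\bar z}\Phi_q(z,0)\;-\;\Phi_q^{-1}\,\partial_q\Phi_q\,\Phi_q^{-1}\,\partial_{\bar z}\Phi(z,0).
\]
The second term vanishes because $\partial_{\bar z}\Phi(z,0)=\partial_{\bar z}\,y(s(z))=0$ on $U$. For the first term, $\Phi_q(z,0)=\id$ holds identically in $z$, so $\partial_{\bar z}\Phi_q(z,0)=\partial_{\bar z}\id=0$. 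Hence $A_u(z,0)=0$ in this frame, so $D_s=\dbar$ in the holomorphic frame $e_j$. Two Cauchy–Riemann operators with the same Leibniz rule that agree on a local frame are equal, so $D_s|_U$ is the natural Dolbeault operator.

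The main point to get right is the legitimacy of the computation, not any estimate. First, $\partial_{\bar z}$ of $\Phi_q(z,0)$ is taken with $q$ fixed, and this commutes with $\partial_q$ because $\Phi$ is smooth jointly in $(z,q)$ (Lemma~\ref{lem:rel-addition}(iv)). Second, the identification $d_u\Psi(x,0)=\id$ must be read in the coordinate frame $e_j$, which is exactly what makes $\Phi_q(z,0)=\id$ constant in $z$. Finally, $D_s$ is intrinsic as an operator on $E$, so computing it in one frame suffices, by the transformation law in the proof of Lemma~\ref{lem:rel-normal-form}.
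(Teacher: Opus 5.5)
Your proposal is correct and follows essentially the same route as the paper: you compute $A_q(z,0)$ in a holomorphic frame and kill the derivative-of-inverse term with $\partial_{\bar z}\Phi(z,0)=0$. The only difference is that the paper uses an arbitrary holomorphic frame, where $B(z)=\partial_q\Phi(z,0)$ is a holomorphic matrix and the remaining term is $B^{-1}\partial_{\bar z}B=0$, whereas you take the coordinate frame $e_j=\partial_{y_j}|_{s(z)}$, which makes $B\equiv\id$.
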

	
	\begin{proof}
		Choose a source coordinate $z$, relative holomorphic target coordinates,
		and a holomorphic frame of $s^*T_{X/\Sigma}$ on a smaller open subset of $U$.  Let
		$\Phi(z,q)$ denote the vertical-coordinate expression of
		$\Psi(s(z),q)$ and put
		\[
		B(z)=\partial_q\Phi(z,0).
		\]
		Since $d_u\Psi(s(z),0)$ is the identity of the vertical tangent space,
		$B(z)$ is the coordinate matrix of the identity between two holomorphic
		frames.  The matrix $B(z)$ is therefore holomorphic and invertible.  Since $s$ is
		holomorphic, $\Phi(z,0)$ is holomorphic.  The normalized equation is
		\[
		\dbar q+A(z,q),
		\qquad
		A(z,q)=\bigl(\partial_q\Phi(z,q)\bigr)^{-1}
		\partial_{\bar z}\Phi(z,q)\,d\bar z.
		\]
		Thus $A(z,0)=0$.  On differentiating in $q$ at zero, the derivative of the
		inverse matrix is multiplied by $\partial_{\bar z}\Phi(z,0)=0$, while the
		remaining term is $B^{-1}\partial_{\bar z}B=0$.  Hence $A_q(z,0)=0$, and
		the linearization in the chosen holomorphic frame is $\dbar$.
	\end{proof}
	
	Prescribed jets are enforced by requiring the vertical displacement
	to vanish along a divisor.  The absence of constant and linear terms
	near the interpolation points makes the twisted equation smooth across
	that divisor.
	
	\begin{lemma}
		\label{lem:rel-twist}
		Let $A\subset R$ be finite, let $k_a\geq0$ for $a\in A$, and set
		\[
		\Delta=\sum_{a\in A}(k_a+1)a.
		\]
		Assume that $\operatorname{supp}\Delta$ is contained in an open set on which
		$s$ is holomorphic.  Set $E_{\Delta}=E_{D_s}(-\Delta)$, and let
		$\iota:E_{\Delta}\to E_{D_s}$ be the canonical holomorphic morphism.  Write $\dbar_{E_{\Delta}}$
		for the Dolbeault operator of $E_{\Delta}$.
		The operator
		\[
		\mathscr S_{\Delta}(v)=\iota^{-1}\mathscr S_s(\iota v),
		\]
		initially defined off $\operatorname{supp}\Delta$, extends across
		$\operatorname{supp}\Delta$ to a smooth semilinear
		operator on a sufficiently small fixed fiber neighborhood in $E_{\Delta}$.  In
		$\dbar_{E_{\Delta}}$-holomorphic frames $\mathscr S_{\Delta}$ has the form
		\begin{equation}\label{eq:rel-twisted-form}
			\mathscr S_{\Delta}(v)=\dbar v+A_{\Delta}(z,v),
			\qquad (A_{\Delta})_v(z,0)=0.
		\end{equation}
		The coefficient $A_{\Delta}(z,0)$ vanishes on every open set on which $s$ is holomorphic.
	\end{lemma}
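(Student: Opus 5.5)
The extension across $\operatorname{supp}\Delta$ is a local question, so I would work near a single point $a\in A$. Choose a source coordinate $z$ with $z(a)=0$ on a disk $U$ where $s$ is holomorphic. On $U$, by Lemma~\ref{lem:alg-natural-Ds}, $D_s$ is the natural Dolbeault operator of $s^*T_{X/R}|_U$. Choose a $D_s$-holomorphic frame of $E$ on $U$. Lemma~\ref{lem:rel-normal-form} then writes
\[
\mathscr S_s(u)=\dbar u+A(z,u),\qquad A(z,0)=0,\quad A_u(z,0)=0\quad(z\in U),
\]
with $A$ smooth in $z$ and holomorphic in $u$. Set $m=k_a+1$. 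The frame $e_j'=z^m e_j$ is a holomorphic frame of $E_\Delta=E_{D_s}(-\Delta)$ on $U$, and $\iota$ is multiplication by $z^m$ in these coordinates. For $v$ expressed in the twisted frame and $z\ne0$, this gives
\[
\mathscr S_\Delta(v)=z^{-m}\bigl(\dbar(z^m v)+A(z,z^mv)\bigr)=\dbar v+z^{-m}A(z,z^m v),
\]
since $\dbar z^m=0$. So the whole issue is to show that $A_\Delta(z,v):=z^{-m}A(z,z^mv)$ extends smoothly across $z=0$.

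For the extension I would use Taylor's formula in the holomorphic variable $u$. Since $A(z,0)=0$ and $A_u(z,0)=0$ on $U$,
\[
A(z,u)=\int_0^1(1-t)\,A_{uu}(z,tu)[u,u]\,dt .
\]
Substituting $u=z^m v$ gives
\[
A_\Delta(z,v)=z^{m}\int_0^1(1-t)\,A_{uu}(z,tz^mv)[v,v]\,dt .
\]
The right-hand side is smooth in $(z,v)$ for $|v|$ in a fixed ball. This requires $|z^m v|$ to stay inside the fiber neighborhood where $A$ is defined, which holds after fixing a small fiber radius and a disk with $|z|\le1$. It is holomorphic in $v$, vanishes at $v=0$, and its $v$-derivative at $v=0$ is zero because the expression is quadratic to leading order. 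This gives \eqref{eq:rel-twisted-form} near $a$, and it even shows that $A_\Delta$ vanishes to order $m$ at $z=a$.

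Away from $\operatorname{supp}\Delta$, $\iota$ is an isomorphism, so $\mathscr S_\Delta$ is smooth there. To obtain the normal form in an arbitrary $\dbar_{E_\Delta}$-holomorphic frame, I would note that off $\operatorname{supp}\Delta$ such a frame corresponds under $\iota$ to a $D_s$-holomorphic frame of $E$, because $\iota$ is a holomorphic isomorphism there. Conjugating the form from Lemma~\ref{lem:rel-normal-form} by a holomorphic matrix preserves both $\dbar v+A(z,v)$ and the vanishing of the linear term, by the transformation law in its proof. Near $a$, a change of holomorphic frame is again a holomorphic matrix $T$, and the same law $A'=T^{-1}A(z,Tq')$ keeps $(A_\Delta)_v(z,0)=0$. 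The uniform fiber neighborhood follows by covering the compact domain with finitely many such charts and shrinking.

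Finally, on any open set where $s$ is holomorphic, $\mathscr S_s(0)=0$. Hence $A_\Delta(z,0)=\iota^{-1}A(z,0)=0$ off $\operatorname{supp}\Delta$, and by continuity also on it. The main obstacle is only bookkeeping: making the fiber neighborhood in $E_\Delta$ uniform despite the factor $z^m$. Away from the points of $A$ the norms of $E_\Delta$ and $E$ are comparable on compact sets, and near them $|\iota v|\le|v|$, so a single small radius suffices.
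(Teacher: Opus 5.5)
Your proposal is correct and follows essentially the same route as the paper. You localize at each $a\in A$ and use compatible holomorphic frames in which $\iota$ is multiplication by $z^{m}$. You then divide out $z^{m}$ using $A(z,0)=A_u(z,0)=0$, which comes from holomorphy of $s$ near $a$ and Lemma~\ref{lem:rel-normal-form}, and you handle the region off $\operatorname{supp}\Delta$ and the constant term just as the paper does. The only difference is minor. You write the quotient as the integral Taylor remainder $z^{m}\int_0^1(1-t)A_{uu}(z,tz^mv)[v,v]\,dt$, whereas the paper expands $A(z,u)=\sum_{|\alpha|\geq2}a_\alpha(z)u^\alpha$ and uses Cauchy estimates for the rescaled series; your form gives joint smoothness directly, without a convergence argument.
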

	
	\begin{proof}
		Put $n=\operatorname{rank}E$. Fix $a\in\operatorname{supp}\Delta$, choose a coordinate $z$ centered at $a$,
		and put $m=k_a+1$.  In compatible holomorphic frames, $\iota$ is
		multiplication by $z^m$.  Write $\mathscr S_s(u)=\dbar u+\mathcal A(z,u)$
		in these frames.  Lemma~\ref{lem:rel-normal-form} and the
		holomorphy of $s$ near $a$ give $\mathcal A(z,0)=\mathcal A_u(z,0)=0$.  Hence
		\[
		\mathcal A(z,u)=\sum_{|\alpha|\geq2}a_\alpha(z)u^\alpha
		\]
		on a smaller fiber ball, where $\alpha\in\mathbb N^n$
		and $a_\alpha$ is a smooth $(0,1)$-form coefficient.  Since
		$\dbar z^m=0$, one obtains
		\begin{equation}\label{eq:rel-divisor-division}
			z^{-m}\mathscr S_s(z^m v)
			=\dbar v+\sum_{|\alpha|\geq2}
			a_\alpha(z)z^{m(|\alpha|-1)}v^\alpha.
		\end{equation}
		The right-hand side is smooth at $z=0$ and holomorphic in $v$; Cauchy
		estimates give locally uniform convergence with all mixed derivatives on a
		smaller fixed fiber ball.  Every exponent $m(|\alpha|-1)$ is nonnegative, and every monomial
		has degree at least two in $v$.  Thus the extended fiber-linear term
		vanishes at $v=0$.  Off $\operatorname{supp}\Delta$, the
		identity $D_s\iota=\iota\dbar_{E_{\Delta}}$ and
		$\mathcal A_u(z,0)=0$ give $(A_{\Delta})_v(z,0)=0$ directly.  Finally, $\mathscr S_s(0)=0$ wherever $s$ is
		holomorphic, which proves the assertion about the constant term.
	\end{proof}
	
	In the reconstruction, the identity $u=\iota v=z^{k_a+1}v$, with $v$ smooth and bounded,
	forces $u=O(|z|^{k_a+1})$ near $a$.  Once the reconstructed section is holomorphic,
	the bound for $u$ gives equality of the prescribed jets, as shown at the end of the
	proof of Theorem~\ref{thm:rel-bordered}; the parameter itself need not be
	holomorphic in an arbitrary smooth frame.
	
	\subsection{Small solutions on a fixed bundle}
	
	In both approximation constructions the bundle, norms, and right inverse
	are fixed before the disks are selected.
	\begin{lemma}
		\label{lem:fixed-bundle-solution}
		Let $E$ be a holomorphic vector bundle on a compact Riemann surface
		$\Sigma$, with $H^1(\Sigma,E)=0$.  Fix smooth metrics, the resulting area
		form, a radius $\rho>0$, and a positive number $r_0$. Write
		$E(\rho)=\{v\in E:|v|<\rho\}$.  For $0<r<r_0$, let
		\[
		A_r:E(\rho)\longrightarrow\Lambda^{0,1}T^*\Sigma\otimes E
		\]
		be smooth bundle maps, holomorphic in the fiber variable, with
		\[
		\sup_{E(\rho)}\bigl(|A_r|+|d_vA_r|+|d_v^2A_r|\bigr)\leq M
		\]
		for a constant $M$ independent of $r$.  Put
		\[
		\mathbb X=W^{1,4}(\Sigma,E),\quad
		\mathbb Y=L^4(\Sigma,\Lambda^{0,1}T^*\Sigma\otimes E),\quad
		\mathscr F_r(v)=\dbar_Ev+A_r(z,v(z)).
		\]
		The domain of $\mathscr F_r$ is the open set
		$\{v\in\mathbb X:\|v\|_{C^0}<\rho\}$.
		Suppose that $\varepsilon_r>0$ tends to zero as $r\downarrow0$ and
		\[
		\|\mathscr F_r(0)\|_{\mathbb Y}\leq\varepsilon_r,
		\qquad
		\|d\mathscr F_r(0)-\dbar_E\|_{\mathbb X\to\mathbb Y}
		\leq\varepsilon_r.
		\]
		For all sufficiently small $r$, the following hold:
		\begin{enumerate}[label=\textup{(\roman*)},leftmargin=2.2em]
			\item There is a solution $v_r\in\mathbb X$ of $\mathscr F_r(v_r)=0$ satisfying
			\[
			\|v_r\|_{W^{1,4}}\leq C\varepsilon_r,
			\qquad \|v_r\|_{C^0}<\rho.
			\]
			The constant $C$ and the required upper bound for $\varepsilon_r$ depend
			only on the fixed bundle, metrics, $\rho$, and $M$.
			\item The solution $v_r$ in \textup{(i)} is smooth. No bounds for source
			derivatives of $A_r$ that are uniform in $r$ are required.
		\end{enumerate}
	\end{lemma}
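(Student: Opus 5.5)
The plan is a Newton-type fixed-point argument based on a bounded right inverse of $\dbar_E$. The text after \eqref{eq:tools-dolbeault-estimate} notes that $H^1(\Sigma,E)=0$ makes $\dbar_E:\mathbb X\to\mathbb Y$ surjective with closed range. Choose a closed complement $\mathbb X_1$ of the finite-dimensional kernel $H^0(\Sigma,E)$. Let $Q:\mathbb Y\to\mathbb X_1$ be the inverse of $\dbar_E|_{\mathbb X_1}$; it is bounded by the open mapping theorem, with norm $\|Q\|\le C_Q$ depending only on the fixed data. Let $C_S$ be the Sobolev constant from \eqref{eq:tools-small-area}, so that $\|v\|_{C^0}\le C_S\|v\|_{\mathbb X}$.

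I look for $v=Qy$ with $y\in\mathbb Y$ and $\|y\|_{\mathbb Y}$ small, reducing the equation to
\[
y=\Phi_r(y):=-A_r(z,Qy(z)).
\]
Indeed, $\mathscr F_r(Qy)=y+A_r(z,Qy)$, so a fixed point of $\Phi_r$ is exactly a zero of $\mathscr F_r$.

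For the contraction estimate, write $N_r(v)=A_r(z,v)$. Then $N_r(0)=\mathscr F_r(0)$ and $dN_r(0)=d\mathscr F_r(0)-\dbar_E$. The hypotheses give $\|N_r(0)\|_{\mathbb Y}\le\varepsilon_r$ and $\|dN_r(0)\|_{\mathbb X\to\mathbb Y}\le\varepsilon_r$. For $v,w$ with $C^0$ norm below $\rho/2$, Taylor's formula in the fiber variable gives the pointwise bound
\[
|N_r(v)-N_r(w)-d_vA_r(z,0)(v-w)|\le M(|v|+|w|)\,|v-w|.
\]
Taking $L^4$ norms and using the $C^0$ Sobolev bound,
\[
\|N_r(v)-N_r(w)\|_{\mathbb Y}\le \bigl(\varepsilon_r+MC_S(\|v\|_{C^0}+\|w\|_{C^0})\operatorname{area}(\Sigma)^{1/4}\bigr)\|v-w\|_{\mathbb X}.
\]
A convexity remark is needed to justify the mean value step: the fiber balls are convex, so the segment between $v$ and $w$ stays in $E(\rho)$.

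Now take the ball $B=\{\|y\|_{\mathbb Y}\le 2\varepsilon_r\}$. For small $r$ we have $2C_SC_Q\varepsilon_r<\rho/2$ and $C_Q(\varepsilon_r+4MC_SC_Q\operatorname{area}^{1/4}\varepsilon_r)\le1/2$. Under these conditions $\Phi_r$ maps $B$ into itself, since $\|\Phi_r(y)\|\le\varepsilon_r+\tfrac12\|y\|$, and is a $\tfrac12$-contraction on $B$. Banach's theorem gives a fixed point $y_r$. Then $v_r=Qy_r$ satisfies $\|v_r\|_{W^{1,4}}\le 2C_Q\varepsilon_r$ and $\|v_r\|_{C^0}<\rho$. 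The constants involve only $C_Q$, $C_S$, $M$, $\rho$ and the area, which proves (i).

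The step I expect to be the main obstacle is (ii), smoothness without uniform source-derivative bounds on $A_r$; no uniformity is needed because regularity is proved for each fixed $r$. The plan is a local bootstrap. In a local holomorphic frame the equation reads $\partial_{\bar z}v=-A_r(z,v)$. Since $v\in W^{1,4}\subset C^{0,1/2}$ and $A_r$ is smooth in both variables, the right-hand side lies in $W^{1,4}_{\mathrm{loc}}$ by the chain rule for Sobolev functions composed with a smooth map. The interior Dolbeault estimate and regularity cited after \eqref{eq:tools-dolbeault-estimate} (from \cite{MS12}) then give $v\in W^{2,4}_{\mathrm{loc}}$. Inductively, $v\in W^{k,4}$ implies $A_r(z,v)\in W^{k,4}$, since $W^{k,4}$ is an algebra for $k\ge1$ in real dimension two, and hence $v\in W^{k+1,4}$. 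Thus $v_r\in C^\infty$.
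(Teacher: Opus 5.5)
Your proof is correct and follows the same strategy as the paper: a bounded right inverse of $\dbar_E$ obtained from $H^1(\Sigma,E)=0$, a Banach contraction on a ball of radius $O(\varepsilon_r)$, and a local $W^{k,4}$ bootstrap for (ii). The only difference is the fixed-point formulation. The paper first inverts the full linearization via $R_r=T(I+P_rT)^{-1}$ and contracts only the quadratic remainder $\mathcal N_r$. You keep the fixed inverse $Q$, substitute $v=Qy$, and absorb the $O(\varepsilon_r)$ linear defect into the Lipschitz constant, which lets a pointwise Taylor bound replace the paper's $C^2$ verification. One small slip: plugging your ball into your Lipschitz bound gives $4MC_S^2C_Q$ rather than $4MC_SC_Q$, which is harmless since $\varepsilon_r\to0$.
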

	
	\begin{proof}
		\noindent\textup{(i)} Dolbeault Fredholm theory and $H^1(\Sigma,E)=0$ give
		surjectivity of $\dbar_E:\mathbb X\to\mathbb Y$; the corresponding
		regularity estimate is~\eqref{eq:tools-dolbeault-estimate}. Put
		\[
		\mathbb X_0=\{u\in\mathbb X:
		\langle u,h\rangle_{L^2}=0\ \text{for every }h\in H^0(\Sigma,E)\}.
		\]
		The restriction $\dbar_E:\mathbb X_0\to\mathbb Y$ is a bounded bijection
		between Banach spaces.  The bounded inverse theorem gives
		$T:\mathbb Y\to\mathbb X_0\subset\mathbb X$ with $\dbar_ET=I_{\mathbb Y}$.
		
		On $\{v\in\mathbb X:\|v\|_{C^0}<\rho\}$, the map $\mathscr F_r$ is $C^2$.
		Indeed, Taylor expansion in each of finitely many fixed bundle frames gives
		\[
		\begin{aligned}
			d\mathscr F_r(v)\xi
			&=\dbar_E\xi+(d_vA_r)(z,v(z))\xi(z),\\
			d^2\mathscr F_r(v)(\xi,\xi')
			&=(d_v^2A_r)(z,v(z))[\xi(z),\xi'(z)].
		\end{aligned}
		\]
		Sobolev embedding $W^{1,4}\hookrightarrow C^0$, density, and the fiber
		bounds give the displayed first- and second-derivative formulas and continuity on the indicated open
		set. More explicitly, for fixed $r$ and $v$, choose
		$\|v\|_{C^0}<\rho'<\rho$. On the closed $\rho'$-ball bundle,
		$d_v^2A_r$ has a modulus of continuity $\omega_{r,\rho'}(t)\to0$.
		For sufficiently small $\|h\|_{\mathbb X}$, the segments $v+th$,
		$0\leq t\leq1$, lie in that ball bundle. Taylor's integral formula and $W^{1,4}\hookrightarrow C^0$ give
		\[
		\|A_r(\cdot,v+h)-A_r(\cdot,v)-d_vA_r(\cdot,v)h\|_{L^4}
		\leq C\|h\|_{C^0}\|h\|_{L^4}=O(\|h\|_{\mathbb X}^2).
		\]
		Applying the integral formula to $d_vA_r$ bounds the remainder after
		subtracting $d_v^2A_r(\cdot,v)[h,\xi]$ by
		\[
		\omega_{r,\rho'}(\|h\|_{C^0})\|h\|_{C^0}\|\xi\|_{L^4}.
		\]
		Taking the supremum over $\|\xi\|_{\mathbb X}\leq1$ proves
		Fr\'echet differentiability of the first differential in operator norm;
		the same modulus proves continuity of the second differential.
		No modulus uniform in $r$ is needed. For
		\[
		\mathcal N_r(v)=\mathscr F_r(v)-\mathscr F_r(0)-d\mathscr F_r(0)v,
		\]
		the fiber bound and Sobolev embedding give a constant $K$ independent of $r$
		such that
		\[
		\|d^2\mathscr F_r(u)(\xi,\xi')\|_{\mathbb Y}
		\leq K\|\xi\|_{\mathbb X}\|\xi'\|_{\mathbb X}.
		\]
		Let $C_S$ be a Sobolev embedding constant for $\mathbb X\hookrightarrow C^0$.
		Take $\delta>0$ with $C_S\delta<\rho/2$.  For
		$\|v\|_{\mathbb X},\|w\|_{\mathbb X}\leq\delta$, the segment
		$w+t(v-w)$ stays in that ball, and
		\[
		\mathcal N_r(v)-\mathcal N_r(w)
		=\int_0^1\bigl(d\mathscr F_r(w+t(v-w))-d\mathscr F_r(0)\bigr)(v-w)\,dt.
		\]
		The second-derivative bound therefore yields
		\begin{equation}\label{eq:fixed-nonlinear}
			\|\mathcal N_r(v)-\mathcal N_r(w)\|_{\mathbb Y}
			\leq C(\|v\|_{\mathbb X}+\|w\|_{\mathbb X})\|v-w\|_{\mathbb X}.
		\end{equation}
		The constant in~\eqref{eq:fixed-nonlinear} depends on $M$, the finite frame changes, the Sobolev embedding
		constant, and the fixed area of $\Sigma$, and is independent of $r$.
		
		Put $P_r=d\mathscr F_r(0)-\dbar_E$.  For $\varepsilon_r\|T\|<1/2$, one has $\|P_rT\|<1/2$, so
		\[
		R_r=T(I+P_rT)^{-1}
		\]
		is a right inverse of $d\mathscr F_r(0)$ with $\|R_r\|\leq2\|T\|$.
		Indeed, $d\mathscr F_r(0)T=I+P_rT$, hence
		$d\mathscr F_r(0)R_r=I$.  Consider the fixed-point equation
		\begin{equation}\label{eq:fixed-contraction}
			v=-R_r\bigl(\mathscr F_r(0)+\mathcal N_r(v)\bigr).
		\end{equation}
		Increase $K$, if necessary, to bound the constant in~\eqref{eq:fixed-nonlinear},
		put $C_0=\max\{1,4\|T\|\}$, and take $r$ small enough that
		\[
		C_0\varepsilon_r<\delta,\qquad
		4\|T\|KC_0\varepsilon_r<\tfrac12.
		\]
		Write $\mathcal B_r$ for the closed $\mathbb X$-ball of radius
		$C_0\varepsilon_r$, and write $Q_r(v)$ for the right side of
		\eqref{eq:fixed-contraction}.  Since $\mathcal N_r(0)=0$,
		\[
		\begin{aligned}
			\|Q_r(v)\|_{\mathbb X}
			&\leq2\|T\|\bigl(\varepsilon_r+KC_0^2\varepsilon_r^2\bigr)
			< C_0\varepsilon_r,\\
			\|Q_r(v)-Q_r(w)\|_{\mathbb X}
			&\leq4\|T\|KC_0\varepsilon_r\|v-w\|_{\mathbb X}
			<\tfrac12\|v-w\|_{\mathbb X}
			\qquad(v,w\in\mathcal B_r).
		\end{aligned}
		\]
		The choice of $\delta$ also gives
		$C_SC_0\varepsilon_r<\rho/2$.  The Banach contraction theorem gives a fixed
		point $v_r$.  Applying $d\mathscr F_r(0)$ to
		\eqref{eq:fixed-contraction} gives
		\[
		d\mathscr F_r(0)v_r=-\mathscr F_r(0)-\mathcal N_r(v_r),
		\qquad \mathscr F_r(v_r)=0.
		\]
		Membership in $\mathcal B_r$ gives $\|v_r\|_{\mathbb X}\leq C_0\varepsilon_r$.
		
		\noindent\textup{(ii)} Fix a value of $r$ for which the fixed point in \textup{(i)}
		exists. In a local holomorphic frame the equation is
		$\dbar v_r=-A_r(z,v_r)$.  The coefficient is smooth across all source
		charts. In a source coordinate, let $\nabla$ denote the real gradient.
		Since $v_r\in W^{1,4}\subset C^0$, the Sobolev chain rule gives
		\[
		\nabla[A_r(z,v_r)]
		=(\nabla_zA_r)(z,v_r)+(d_vA_r)(z,v_r)\nabla v_r\in L^4.
		\]
		Local Dolbeault regularity yields $v_r\in W^{2,4}$.  On successively smaller source charts, the Sobolev product and chain
		rules give $A_r(z,v_r)\in W^{k,4}$ whenever $v_r\in W^{k,4}$,
		for $k\geq1$. The local form of~\eqref{eq:tools-dolbeault-estimate} then gives
		$v_r\in W^{k+1,4}$.  Induction and the Sobolev embeddings prove smoothness.  Source derivatives enter only the regularity argument for a fixed $r$;
		the bounds for those source derivatives may depend on $r$.
	\end{proof}
	
	\section{Analytic approximation}\label{sec:analytic-approximation}
	
	\subsection{Prepared rational disks}
	
	The remainder of the analytic construction assumes the relative first-jet
	sphere-family property, rather than projectivity or rational connectedness.
	Write $n$ for the relative dimension, $D_r=\Delta_r=\{z\in\C:|z|<r\}$,
	and $B_r=\{q\in\C^n:|q|<r\}$. The substitution $w=\sigma^2/z$ replaces
	the antiholomorphic first-order term of a smooth section by a holomorphic
	rational disk.
	\begin{lemma}
		\label{lem:rel-prepared}
		Let $p:X\to D_{r_0}$ be a proper holomorphic submersion with the relative
		first-jet sphere-family property, and let
		\[
		\Phi:D_{r_0}\times B_{2d_0}\longrightarrow X,
		\qquad p(\Phi(z,q))=z,
		\]
		be smooth and holomorphic in $q$.  Suppose that the image of $\Phi$ near $(0,0)$ lies
		in a relative holomorphic chart $(p,\phi)$.  Writing
		$\widehat\Phi=\phi\circ\Phi$, we have, after shrinking the two balls,
		\begin{equation}\label{eq:rel-family-taylor}
			\widehat\Phi(z,q)=m(q)+a(q)z+b(q)\bar z+R(z,q),
			\qquad R=O_{C_q^3}(|z|^2),
		\end{equation}
		where
		\[
		m(q)=\widehat\Phi(0,q),\quad
		a(q)=\partial_z\widehat\Phi(0,q),\quad
		b(q)=\partial_{\bar z}\widehat\Phi(0,q)
		\]
		are holomorphic in $q$.  Here $O_{C_q^3}(|z|^2)$ means that
		the $q$-derivatives through order three are bounded by $C|z|^2$ on the
		fixed parameter ball. The notation $\|\cdot\|_{C_q^j}$ denotes the supremum
		of all parameter derivatives through order $j$ on the indicated ball.
		On $|z|=\sigma$, the identity $\bar z=\sigma^2/z$ is the matching relation used to replace the
		antiholomorphic first-order term by a holomorphic rational disk.
		
		There are $d,s_0,C>0$ such that, for every $0<\sigma<s_0/4$, there is a
		jointly holomorphic map
		\[
		H_\sigma:D_{s_0}\times B_{2d}\longrightarrow X,
		\qquad p(H_\sigma(z,q))=z.
		\]
		On $\sigma/2\leq|z|\leq2\sigma$, the vertical coordinate of $H_\sigma$ satisfies
		\begin{equation}\label{eq:rel-prepared-expansion}
			\phi(H_\sigma(z,q))
			=m(q)+a(q)z+b(q)\frac{\sigma^2}{z}+R_\sigma(z,q),
		\end{equation}
		with, for $|\alpha|\leq3$,
		\begin{equation}\label{eq:rel-prepared-estimates}
			\sup_{|q|\leq d}|\partial_q^\alpha R_\sigma(z,q)|\leq C\sigma^2,
			\qquad
			\sup_{|q|\leq d}|\partial_z\partial_q^\alpha R_\sigma(z,q)|
			\leq C\sigma.
		\end{equation}
		Moreover, $d_q H_\sigma$ is uniformly bounded on
		$D_{s_0}\times B_d$, independently of $\sigma$.  The constants are locally
		uniform when the source and target charts are fixed, the relevant first
		jets remain in a relatively compact subset of the domain of one fixed
		first-jet family, and the indicated mixed derivatives of $\Phi$ are
		uniformly bounded.  The additional compact-center uniformity needed below,
		including target-chart margins and inverse bounds, is stated separately in
		Lemma~\ref{lem:rel-uniform-matching}.
	\end{lemma}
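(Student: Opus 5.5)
The plan is to build $H_\sigma$ by feeding the data $(m,a,b)$ from \eqref{eq:rel-family-taylor} into one fixed first-jet sphere family. The expansion \eqref{eq:rel-family-taylor} itself is only Taylor's formula in $z$ at $z=0$, applied to the smooth map $\widehat\Phi$ for each fixed $q$. Since $\widehat\Phi$ is holomorphic in $q$, so are $m,a,b$, because $\partial_z,\partial_{\bar z}$ commute with $\partial_{\bar q}$. The bound $R=O_{C_q^3}(|z|^2)$ follows from the integral form of the remainder, using bounded second $z$-derivatives of $\partial_q^\alpha\widehat\Phi$ with $|\alpha|\le3$ on a slightly smaller closed polydisk.

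\textbf{Step 1 (holomorphic base-point family).} In the relative chart $(p,\phi)$, let $x(z,q)\in X_z$ be the point with vertical coordinate $m(q)+a(q)z$. Let $\beta(z,q)\in T_{X/D_{r_0},x(z,q)}$ be the vertical vector whose coordinate expression is $b(q)$. Both are jointly holomorphic in $(z,q)$, and $(x(0,0),\beta(0,0))=(\Phi(0,0),\partial_{\bar z}\widehat\Phi(0,0))$ in coordinates. Take the family $G:\Pone\times\Omega\to X$ from Definition~\ref{def:rel-first-jet-spheres} at this vertical vector, and a relatively compact $\Omega'\Subset\Omega$ containing it. Shrink $s_0$ and $d$ so that $(x(z,q),\beta(z,q))\in\Omega'$ for $|z|<s_0$ and $|q|<2d$.

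\textbf{Step 2 (definition).} Set
\[
H_\sigma(z,q)=G\bigl(\sigma^2/z;\,x(z,q),\beta(z,q)\bigr),
\]
with $\sigma^2/z=\infty\in\Pone$ at $z=0$. This map is jointly holomorphic on $D_{s_0}\times B_{2d}$, since $z\mapsto\sigma^2/z$ is a holomorphic map $D_{s_0}\to\Pone$. It satisfies $p(H_\sigma(z,q))=p(x(z,q))=z$ by \eqref{eq:rel-sphere-identities}. Moreover, $d_qH_\sigma$ is the parameter derivative of $G$, which is bounded on $\Pone\times\Omega'$ by Remark~\ref{rem:rel-first-jet-base-change}, composed with $d_q(x,\beta)$. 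Hence it is bounded independently of $\sigma$.

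\textbf{Step 3 (expansion on the annulus).} For $\sigma/2\le|z|\le2\sigma$ we have $|w|=|\sigma^2/z|\le2\sigma<s_0/2$. Shrinking $s_0$, $G(w;x',\beta')$ lies in the chart $\phi$ for $|w|\le s_0/2$ and $(x',\beta')\in\Omega'$, because $G(0;x',\beta')=x'$ and the derivative bounds hold. The identities \eqref{eq:rel-sphere-identities} give
\[
\phi(G(w;x',\beta'))=\phi(x')+\beta'w+\varrho(w;x',\beta'),
\]
where $\varrho$ is holomorphic, vanishes to second order at $w=0$, and has bounded derivatives. Substituting $w=\sigma^2/z$ yields \eqref{eq:rel-prepared-expansion} with $R_\sigma(z,q)=\varrho(\sigma^2/z;x(z,q),\beta(z,q))$, so $|R_\sigma|\le C|w|^2\le 4C\sigma^2$. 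For the $z$-derivative, the chain rule gives two terms. The first is $\partial_w\varrho\cdot(-\sigma^2/z^2)$, which is $O(\sigma)\cdot 4$. The second is $\partial_{(x,\beta)}\varrho\cdot\partial_z(x,\beta)$, which is $O(\sigma^2)$. Together these give $C\sigma$. The $q$-derivatives up to order three follow the same way, since $\partial_q^\alpha\varrho$ still vanishes to second order in $w$. Alternatively, one can use Cauchy estimates on $B_{2d}$ to pass to $B_d$. This proves \eqref{eq:rel-prepared-estimates}.

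The main obstacle is the uniformity in Steps 2 and 3. The chart margin and the $w$-remainder bounds must hold uniformly for all $(x',\beta')\in\overline{\Omega'}$, and the derivative bounds of $G$ must hold on all of $\Pone$, including near $w=\infty$. I would handle this with the compactness argument of Remark~\ref{rem:rel-first-jet-base-change}, covering $\Pone\times\overline{\Omega'}$ by finitely many chart neighborhoods. The local uniformity asserted at the end of the lemma then follows because all constants depend only on $\Omega'$, the fixed charts, and bounds on mixed derivatives of $\Phi$.
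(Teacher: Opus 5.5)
Your proposal is correct and follows the paper's proof essentially step for step. You use the same definition $H_\sigma(z,q)=G(\sigma^2/z;\,z,m(q)+a(q)z,b(q))$, since your $(x(z,q),\beta(z,q))$ is exactly the paper's parameter triple read in the chart, and the same holomorphic extension across $z=0$ via $\sigma^2/z\mapsto\infty$. The paper factors the second-order remainder as $w^2\mathcal C(w;\cdot)$ and bounds $\sigma^4/z^2$ and its $z$-derivative explicitly; this is equivalent to your chain-rule argument with $\varrho$ vanishing to second order in $w$.
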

	
	\begin{proof}
		Equation~\eqref{eq:rel-family-taylor} is Taylor's formula in
		$(\operatorname{Re}z,\operatorname{Im}z)$.  The functions $m,a,b$ are
		holomorphic in $q$: smoothness permits commuting the source derivatives
		with $\partial_{\bar q}$, and $\partial_{\bar q}\widehat\Phi=0$.
		
		Choose a family $G$ from Definition~\ref{def:rel-first-jet-spheres} at the
		vertical first jet represented by $(0,m(0),b(0))$.  In the relative chart,
		write the parameters of $G$ as $(t,x,\beta)$.  After shrinking $s_0$ and $d$, the
		triple
		\[
		\bigl(z,m(q)+a(q)z,b(q)\bigr)
		\]
		lies in a fixed compact subset $\mathcal L$ of the parameter domain of $G$
		whenever $|z|\leq s_0$ and $|q|\leq2d$. By continuity of $G$ and compactness
		of $\mathcal L$, there is $\epsilon_G>0$ such that $G(w;\lambda)$ lies in
		the chosen target chart for $|w|\leq\epsilon_G$ and $\lambda\in\mathcal L$.
		Decrease $s_0$ so that $s_0/2<\epsilon_G$. Then on the annulus used below,
		$|\sigma^2/z|\leq2\sigma<s_0/2$, so the coordinate expansion is defined.
		Set
		\begin{equation}\label{eq:rel-prepared-disc}
			H_\sigma(z,q)=
			G\left(\frac{\sigma^2}{z};
			z,m(q)+a(q)z,b(q)\right),
		\end{equation}
		where $z\mapsto\sigma^2/z$ is viewed as a holomorphic map to
		$\Pone$ taking $0$ to $\infty$.  For each fixed $\sigma>0$, the coordinate
		$w'=1/w$ at infinity gives $w'=z/\sigma^2$, an expression that is
		holomorphic at zero and independent of $q$.  Thus the formula
		in~\eqref{eq:rel-prepared-disc} extends holomorphically across $z=0$, and the
		identity $p(G(w;t,x,\beta))=t$ gives $p\circ H_\sigma=z$.
		In particular, $H_\sigma(0,q)=G(\infty;0,m(q),b(q))$.
		
		The prescribed first jet of $G$ gives a holomorphic map $\mathcal C$ such that, for
		$w$ near zero,
		\[
		\phi(G(w;t,x,\beta))=x+\beta w+w^2\mathcal C(w;t,x,\beta).
		\]
		Substitution into~\eqref{eq:rel-prepared-disc}
		gives~\eqref{eq:rel-prepared-expansion}, with
		\[
		R_\sigma(z,q)=\frac{\sigma^4}{z^2}
		\mathcal C\left(\frac{\sigma^2}{z};
		z,m(q)+a(q)z,b(q)\right).
		\]
		On $\sigma/2\leq|z|\leq2\sigma$ one has
		\[
		\left|\frac{\sigma^4}{z^2}\right|\leq4\sigma^2,\quad
		\left|\partial_z\frac{\sigma^4}{z^2}\right|\leq16\sigma,\quad
		\left|\partial_z\frac{\sigma^2}{z}\right|\leq4.
		\]  Cauchy estimates for $\mathcal C$ and the fixed
		functions $m,a,b$ give~\eqref{eq:rel-prepared-estimates}.  The uniform
		bound for $d_q H_\sigma$ follows from the parameter-derivative bound in
		Remark~\ref{rem:rel-first-jet-base-change}, since $\sigma^2/z$ is independent of
		$q$.  Differentiation in $q$ therefore introduces no factor from the rational source map $z\mapsto\sigma^2/z$; its
		$z$-derivatives need not be uniformly bounded near
		zero as $\sigma\to0$.  The local uniformity assertion follows by taking compact subsets of
		the chosen source, target, and parameter charts and applying the three displayed bounds and Cauchy estimates
		on those compact subsets.
	\end{proof}
	
	The value $z=0$ in this substitution corresponds to $w=\infty$.
	Thus the construction uses a jointly holomorphic family on all of
	$\Pone$, not merely germs near $w=0$. Invertibility of the parameter
	derivative is needed on the transition annulus, not on the disk core.
	
	\subsection{Uniform matching on annuli}
	\begin{lemma}
		\label{lem:rel-uniform-matching}
		Let $p:X\to R$ be a proper holomorphic submersion with the relative
		first-jet sphere-family property, let
		$P\subset R$ be compact, and let $V$ be a neighborhood of $P$.  Suppose
		that
		\[
		\Phi:V\times B_{2d_0}\longrightarrow X,
		\qquad p(\Phi(z,q))=z,
		\]
		is smooth, holomorphic in $q$, and
		\[
		d_q\Phi(z,0):\C^n\longrightarrow T_{X/R,\Phi(z,0)}
		\]
		is an isomorphism for every $z\in P$.  Then $P$ has a finite cover by
		open sets $V_\nu'\Subset V_\nu\Subset V$, equipped with source coordinates,
		relative target charts, and first-jet families as in
		Definition~\ref{def:rel-first-jet-spheres}, for which there are common constants
		\[
		d>0,\qquad \sigma_*>0,\qquad \mu>0,\qquad C>0
		\]
		with the following properties.
		
		Let $c\in V_\nu'$, put
		$\zeta=\zeta_\nu(z)-\zeta_\nu(c)$, and let
		$0<\sigma<\sigma_*$, with
		\[
		h_\sigma=\sigma^{3/2}<\frac{\sigma}{4}.
		\]
		Put
		\begin{equation}\label{eq:rel-disk-geometry}
			\begin{aligned}
				\Delta^+_{c,\sigma}&=\{|\zeta|<\sigma+h_\sigma\},\\
				\Delta^-_{c,\sigma}&=\{|\zeta|<\sigma-h_\sigma\},\\
				\mathcal T_{c,\sigma}
				&=\Delta^+_{c,\sigma}\setminus\overline{\Delta^-_{c,\sigma}}.
			\end{aligned}
		\end{equation}
		There is a jointly holomorphic prepared family $H_{c,\sigma}$ on the outer
		disk and, on $\mathcal T_{c,\sigma}\times B_d$, a smooth $X$-valued
		matched family $M_{c,\sigma}$ with the following properties:
		\begin{enumerate}[label=\textup{(\roman*)},leftmargin=2.2em]
			\item In the selected vertical target coordinate, write $\widehat\Phi=\phi_\nu\circ\Phi$ and
			$\widehat H_{c,\sigma}=\phi_\nu\circ H_{c,\sigma}$.  Then
			\[
			\Xi_{c,\sigma}
			=(1-\eta_{c,\sigma})\widehat\Phi
			+\eta_{c,\sigma}\widehat H_{c,\sigma},
			\qquad
			M_{c,\sigma}(z,q)
			=(p,\phi_\nu)^{-1}(z,\Xi_{c,\sigma}(z,q)),
			\]
			where
			\[
			\eta_{c,\sigma}(z)
			=\chi_0\!\left(\frac{|\zeta|-\sigma}{h_\sigma}\right),
			\]
			and $\chi_0\in C^\infty(\mathbb R,[0,1])$ is fixed, equal to one on
			$(-\infty,-1/2]$ and zero on $[1/2,\infty)$.  In particular, the displayed
			inverse target chart is defined for every indicated $(z,q)$.
			\item On the transition annulus,
			\begin{equation}\label{eq:rel-uniform-match}
				\|\widehat H_{c,\sigma}-\widehat\Phi\|_{C_q^3}
				\leq Ch_\sigma,
				\qquad
				\|(\partial_q\Xi_{c,\sigma})^{-1}\|\leq\mu^{-1}.
			\end{equation}
			The coefficient
			\begin{equation}\label{eq:rel-physical-coefficient}
				C^M_{c,\sigma}(z,q)
				=(\partial_q\Xi_{c,\sigma}(z,q))^{-1}
				\partial_{\bar\zeta}\Xi_{c,\sigma}(z,q)\,d\bar\zeta
			\end{equation}
			satisfies
			\begin{equation}\label{eq:rel-physical-bounds}
				\sup_{|q|\leq d}|\partial_q^jC^M_{c,\sigma}(z,q)|\leq C,
				\qquad 0\leq j\leq2,
			\end{equation}
			uniformly in $c$, $\nu$, and $\sigma$.  The notation $\partial_q^j$
			denotes the collection of all order-$j$ derivatives in $q$.
			\item On the open outer collar
			$\mathcal T_{c,\sigma}\cap\{|\zeta|>\sigma+h_\sigma/2\}$ the matched family
			equals $\Phi$ and~\eqref{eq:rel-physical-coefficient} equals
			\[
			C^\Phi=(\partial_q\widehat\Phi)^{-1}
			\partial_{\bar\zeta}\widehat\Phi\,d\bar\zeta.
			\]
			On the open inner collar
			$\mathcal T_{c,\sigma}\cap\{|\zeta|<\sigma-h_\sigma/2\}$ the family $M_{c,\sigma}$ equals $H_{c,\sigma}$ and
			$C^M_{c,\sigma}=0$.
		\end{enumerate}
		The inverse-matrix estimate is required only on the transition annulus.
		On the inner disk, reconstruction uses the joint holomorphy of
		$H_{c,\sigma}$ and does not require $d_qH_{c,\sigma}$ to be invertible.
		Only the $q$-derivatives in~\eqref{eq:rel-physical-bounds} are uniform as
		$\sigma\to0$; no uniform assertion is made about $z$-derivatives of the
		matched family or coefficient.
	\end{lemma}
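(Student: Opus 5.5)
The plan is to reduce everything to Lemma~\ref{lem:rel-prepared}, applied at a fixed, finite collection of centers, and to obtain the uniform constants from compactness of $P$. For each $z_0\in P$ we fix a source coordinate $\zeta_\nu$ centered at $z_0$ and a relative holomorphic chart $(p,\phi_\nu)$ containing $\Phi(z_0,0)$. We also fix a first-jet family $G_\nu$ from Definition~\ref{def:rel-first-jet-spheres}, defined on a neighborhood of the vertical first jet $(z_0,\widehat\Phi(z_0,0),\partial_{\bar\zeta}\widehat\Phi(z_0,0))$. Choose $V_\nu'\Subset V_\nu$ small and finitely many of them covering $P$. For a center $c\in V_\nu'$ we recenter by $\zeta=\zeta_\nu(z)-\zeta_\nu(c)$ and write
\[
\widehat\Phi=m_c(q)+a_c(q)\zeta+b_c(q)\bar\zeta+R_c(\zeta,q).
\]
Here $m_c,a_c,b_c$ depend continuously on $c$. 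The remainder satisfies $\|R_c\|_{C^3_q}\le C|\zeta|^2$ uniformly in $c\in\overline{V_\nu'}$, because the bound comes from Taylor's formula with mixed derivatives of $\Phi$ bounded on the compact set $\overline{V_\nu}\times\overline{B_{d_0}}$.

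We define $H_{c,\sigma}$ by formula~\eqref{eq:rel-prepared-disc} with the recentered data and the single family $G_\nu$. Shrinking $V_\nu$ and $d$ keeps the triples $(\zeta,m_c+a_c\zeta,b_c)$ in one compact parameter set $\mathcal L_\nu$. The constants of Lemma~\ref{lem:rel-prepared} are locally uniform under fixed charts and a fixed compact jet set, so \eqref{eq:rel-prepared-expansion}--\eqref{eq:rel-prepared-estimates} hold with constants independent of $c$ and $\sigma$. On $\mathcal T_{c,\sigma}$ we have $|\zeta|=\sigma+O(h_\sigma)$, which gives
\[
\Bigl|\bar\zeta-\frac{\sigma^2}{\zeta}\Bigr|=\frac{\bigl||\zeta|^2-\sigma^2\bigr|}{|\zeta|}\le 3h_\sigma .
\]
Subtracting the two expansions, the $m$ and $a$ terms cancel, the $b$ term contributes $O(h_\sigma)$ in $C^3_q$, and $R_c$ and $R_\sigma$ contribute $O(\sigma^2)=o(h_\sigma)$. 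This proves the first inequality of~\eqref{eq:rel-uniform-match}.

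Now consider $\Xi_{c,\sigma}$ and the chart margin. Both $\widehat\Phi$ and $\widehat H$ lie within $O(\sigma)$ of $\widehat\Phi(c,0)$, so a convex combination stays in a fixed compact subset of the image of $\phi_\nu$ once $\sigma_*$ and $d$ are small, and $M_{c,\sigma}$ is therefore defined. For the inverse bound we write
\[
\partial_q\Xi=\partial_q\widehat\Phi+\eta\,\partial_q(\widehat H-\widehat\Phi).
\]
The second term is $O(h_\sigma)$, and $\partial_q\widehat\Phi(z,q)$ is invertible with uniformly bounded inverse on $\overline{V_\nu}\times\overline{B_d}$. This uses the hypothesis at $q=0$ on $P$, followed by shrinking $V_\nu$ and $d$ and invoking compactness. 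A Neumann series then gives $\mu$.

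The main obstacle is the coefficient bound~\eqref{eq:rel-physical-bounds}, because $\partial_{\bar\zeta}\eta$ is of size $h_\sigma^{-1}$. We expand
\[
\partial_{\bar\zeta}\Xi=(1-\eta)\partial_{\bar\zeta}\widehat\Phi+\eta\,\partial_{\bar\zeta}\widehat H+(\partial_{\bar\zeta}\eta)(\widehat H-\widehat\Phi).
\]
The middle term vanishes since $H$ is holomorphic. The first term is bounded in $C^2_q$. The last term is $O(h_\sigma^{-1})\cdot O(h_\sigma)=O(1)$ in $C^3_q$, precisely by the matching estimate. Multiplying by $(\partial_q\Xi)^{-1}$, whose $q$-derivatives through order two are controlled by $\mu^{-1}$ and the $C^3_q$ bounds on $\Xi$, yields the $C^2_q$ bound uniformly in $c,\nu,\sigma$. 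No $\zeta$-derivative bounds are claimed, so none are needed.

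Finally, (iii) follows from the support of $\chi_0$. For $|\zeta|>\sigma+h_\sigma/2$ we have $\eta=0$, so $\Xi=\widehat\Phi$, $M=\Phi$ and $C^M=C^\Phi$. For $|\zeta|<\sigma-h_\sigma/2$ we have $\eta=1$, so $M=H_{c,\sigma}$, which is holomorphic in $z$; hence $\partial_{\bar\zeta}\Xi=0$ and $C^M=0$.
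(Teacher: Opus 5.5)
Your proposal is correct and follows the paper's proof essentially step by step. The shared ingredients are: finitely many charts and first-jet families from compactness of $P$; the recentered Taylor expansion feeding Lemma~\ref{lem:rel-prepared} with uniform constants; the bound $|\bar\zeta-\sigma^2/\zeta|\le 3h_\sigma$ together with $\sigma^2\le h_\sigma$; convexity for the chart margin; a Neumann series for $\mu$; and the cancellation $O(h_\sigma^{-1})\cdot O_{C_q^3}(h_\sigma)=O(1)$ for the coefficient. Two small points should be tightened. First, $\widehat\Phi$ and $\widehat H_{c,\sigma}$ lie within $O(\sigma+d)$, not $O(\sigma)$, of $\widehat\Phi(c,0)$; your choice of small $d$ already absorbs this. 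Second, $\sigma_*$ must also be small enough that $\overline{\Delta^+_{c,\sigma}}\subset V_\nu$ for all $c\in V_\nu'$, which the paper arranges explicitly.
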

	
	\begin{proof}
		Fix $c_0\in P$.  Choose a source coordinate near $c_0$ and a relative
		target chart whose image contains a product of the source neighborhood with
		a Euclidean ball about the vertical-coordinate image of $\Phi(c_0,0)$.
		Shrink twice to obtain nested convex balls
		\[
		P^0_{c_0}\Subset P^1_{c_0}\Subset P^2_{c_0}
		\]
		inside the vertical target-coordinate image.  Definition~\ref{def:rel-first-jet-spheres}
		at the antiholomorphic first jet of $\Phi(\,\cdot\,,0)$ at $c_0$ gives a
		first-jet family on a parameter neighborhood.  Shrink the source
		neighborhood and the $q$-ball so that, for every center in the smaller source neighborhood, the
		triples consisting of the moving base point, the value parameter, and the
		antiholomorphic first derivative lie in a relatively compact subdomain of
		that parameter neighborhood.  Also arrange that the coordinate image of
		$\Phi$ lies in $P^0_{c_0}$ on the smaller source and parameter sets, and
		that $d_q\Phi(z,0)$ remains invertible throughout the smaller source set.
		
		Compactness of $P$ gives finitely many such smaller source sets $V_\nu'$
		covering $P$, with closures in the corresponding $V_\nu$. The mixed
		derivatives of $\Phi$ through order two in the source and three in $q$
		have a common bound on the compact source and parameter sets just chosen.  The
		evaluation maps of the finitely many first-jet families have common bounds
		for the required parameter derivatives on the chosen relatively compact
		domains.  The construction and proof of Lemma~\ref{lem:rel-prepared}, after
		translation of the source coordinate to each center $c$, therefore give
		common $d,\sigma_*$ and $C$ and prepared families $H_{c,\sigma}$.  Relabel
		the three nested target-coordinate balls belonging to the $\nu$th retained
		choice of local data as $P^0_\nu\Subset P^1_\nu\Subset P^2_\nu$.
		Decrease $\sigma_*$ once more so that, for every
		$c\in V_\nu'$ and $0<\sigma<\sigma_*$, the closed outer disk
		$\overline{\Delta^+_{c,\sigma}}$ is contained in $V_\nu$.  A common outer-disk radius exists
		because only finitely many inclusions
		$\overline{V_\nu'}\Subset V_\nu$ occur.
		
		\noindent\textup{(i)} The finitely many inclusions
		$P^0_\nu\Subset P^1_\nu\Subset P^2_\nu$ have a common positive coordinate
		margin.  On the transition annulus the two Taylor expansions give
		\[
		\widehat\Phi-\widehat H_{c,\sigma}
		=b_c(q)\left(\bar\zeta-\frac{\sigma^2}{\zeta}\right)
		+R_c(\zeta,q)-R_{c,\sigma}(\zeta,q).
		\]
		Here
		\[
		\left|\bar\zeta-\frac{\sigma^2}{\zeta}\right|
		=\frac{\bigl||\zeta|^2-\sigma^2\bigr|}{|\zeta|}
		\leq h_\sigma\frac{2\sigma+h_\sigma}{\sigma-h_\sigma}
		\leq 3h_\sigma,
		\qquad \sigma^2\leq h_\sigma.
		\]
		The factor $\bar\zeta-\sigma^2/\zeta$ is independent of $q$.
		Combining the displayed bound with the uniform $C_q^3$ bounds for $b_c$
		and the $O_{C_q^3}(\sigma^2)$ remainders proves the first estimate in~\eqref{eq:rel-uniform-match}.  After decreasing
		$\sigma_*$, the prepared family lies in $P^1_\nu$ on the annulus.  Since
		$P^1_\nu$ is convex, the entire segment defining $\Xi_{c,\sigma}$ remains
		in $P^1_\nu\Subset P^2_\nu$.  Thus $M_{c,\sigma}$ takes values in $X$.
		
		\noindent\textup{(ii)} To prove the inverse bound in~\eqref{eq:rel-uniform-match}, use continuity and the assumed invertibility of $d_q\Phi(z,0)$ to obtain,
		after shrinking the common $q$-ball and source sets,
		\[
		\|(\partial_q\widehat\Phi(z,q))^{-1}\|\leq(2\mu)^{-1}
		\]
		for one $\mu>0$ on all chosen source and parameter sets.  The $C_q^1$ part of the first estimate in
		\eqref{eq:rel-uniform-match} makes
		$\|\partial_q\Xi_{c,\sigma}-\partial_q\widehat\Phi\|<\mu$
		after another decrease of $\sigma_*$.  For $L_0=\partial_q\widehat\Phi$ and $L_1=\partial_q\Xi_{c,\sigma}$,
		\[
		L_1=L_0\bigl(I+L_0^{-1}(L_1-L_0)\bigr),\qquad
		\|L_0^{-1}(L_1-L_0)\|<\tfrac12.
		\]
		Hence $\|L_1^{-1}\|\leq2\|L_0^{-1}\|\leq\mu^{-1}$, proving the
		second bound in~\eqref{eq:rel-uniform-match}.
		
		For the following calculation, suppress the fixed indices $c,\sigma$
		in $\eta,\Xi,\widehat H$.  One has $|\dbar\eta_{c,\sigma}|\leq Ch_\sigma^{-1}$.  Since
		$\widehat H$ is holomorphic, the numerator in the coefficient satisfies
		\[
		\partial_{\bar\zeta}\Xi
		=(1-\eta)\partial_{\bar\zeta}\widehat\Phi
		+(\partial_{\bar\zeta}\eta)(\widehat H-\widehat\Phi).
		\]
		The matching difference is $O_{C_q^3}(h_\sigma)$, so its product with the
		cutoff derivative is uniformly bounded through two $q$-derivatives.  Set
		$L_1=\partial_q\Xi$.  The identity
		\[
		\partial_{q_j}(L_1^{-1})
		=-L_1^{-1}(\partial_{q_j}L_1)L_1^{-1}
		\]
		and
		\[
		\begin{aligned}
			\partial_{q_k}\partial_{q_j}L_1^{-1}
			={}&L_1^{-1}(\partial_{q_k}L_1)L_1^{-1}(\partial_{q_j}L_1)L_1^{-1}\\
			&+L_1^{-1}(\partial_{q_j}L_1)L_1^{-1}(\partial_{q_k}L_1)L_1^{-1}\\
			&-L_1^{-1}(\partial_{q_j}\partial_{q_k}L_1)L_1^{-1}
		\end{aligned}
		\]
		bound the first two derivatives of $L_1^{-1}$ by the inverse bound and
		the derivatives of $\Xi$ through order three.  Applying the product
		rule to $L_1^{-1}\partial_{\bar\zeta}\Xi\,d\bar\zeta$ proves
		\eqref{eq:rel-physical-bounds}.
		
		\noindent\textup{(iii)} On the outer collar, $\eta=0$ gives $M_{c,\sigma}=\Phi$ and
		$C^M_{c,\sigma}=C^\Phi$.  On the inner collar, $\eta=1$ gives
		$M_{c,\sigma}=H_{c,\sigma}$ and $C^M_{c,\sigma}=0$, because
		$H_{c,\sigma}$ is jointly holomorphic.
	\end{proof}
	Lemma~\ref{lem:rel-uniform-matching} also applies to finitely many local families $\Phi_\alpha$
	expressed in different holomorphic bundle frames, provided their smaller center sets cover the compact set under consideration.  Apply the proof
	to each family and take minima of the finitely many radii and margins and
	maxima of the bounds.  In a global construction, assign each selected disk
	to one family.  Distinct disks are disjoint, and on every outer collar the
	coefficient agrees with the same global equation.  Thus no compatibility
	between different inner families, and no global holomorphic frame, is
	required.
	
	The compact center sets, finite charts, and bundle frames are fixed first;
	then choose the common parameter radius, target margin, and $\sigma_*$.
	Only afterwards choose the global scale $r$ and the finite disk family.
	For $h_i=\sigma_i^{3/2}$ and $\sigma_i\leq r$, the two roles of
	$h_\sigma=\sigma^{3/2}$ are
	\[
	O(h_\sigma^{-1})O_{C_q^3}(h_\sigma)=O_{C_q^2}(1),
	\qquad
	\sum_i\sigma_i h_i
	=\sum_i\sigma_i^2\sigma_i^{1/2}
	\leq r^{1/2}\sum_i\sigma_i^2.
	\]
	The first bound controls the matched coefficients; disjointness of the disks bounds $\sum_i\sigma_i^2$ and gives the area estimate~\eqref{eq:rel-area}.
	
	\subsection{Approximation on bordered surfaces}
	\begin{theorem}
		\label{thm:rel-bordered}
		Let $\pi:Z\to Y$ be a proper holomorphic submersion with connected
		fibers and the relative first-jet sphere-family property, let $R$ be an open Riemann surface, let $g:R\to Y$ be
		holomorphic, and let
		$p:X=R\times_Y Z\to R$ be the pullback.  Let $D\subset R$ be a compact
		bordered Riemann surface, let $K\subset\operatorname{int}D$ be compact, and
		let $A\subset\operatorname{int}D$ be finite.  Let $s$ be a smooth section of $p$ on
		a neighborhood of $D$, holomorphic on a neighborhood of $K\cup A$.
		Prescribe an integer $k_a\geq0$ for every $a\in A$.  For every $\epsilon>0$ there is a section
		$F$ of $p$ holomorphic on a neighborhood of $D$ such that
		\[
		\sup_K d_X(F,s)<\epsilon,
		\qquad j_a^{k_a}F=j_a^{k_a}s\quad(a\in A).
		\]
	\end{theorem}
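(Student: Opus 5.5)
We will realize the scheme announced in the introduction: write $s$ as the zero section of a fixed bundle, replace $s$ by a "grafted" section that agrees with holomorphic rational disks on most of the area, and solve the section equation with Lemma~\ref{lem:fixed-bundle-solution}.

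\emph{Compact setting and fixed bundle.} By Lemma~\ref{lem:compact-neighborhood}, embed a neighborhood $W$ of $D$ into a compact Riemann surface $\Sigma$. Fix a local addition $\Psi$ over a neighborhood of $D$ (Lemma~\ref{lem:rel-addition}). Let $E_{D_s}$ be $s^*T_{X/R}$ with the holomorphic structure from $D_s$ (Lemma~\ref{lem:rel-normal-form}), and form the twist $E_\Delta=E_{D_s}(-\Delta)$ with $\Delta=\sum_a(k_a+1)a$ (Lemma~\ref{lem:rel-twist}). Since $E_\Delta$ lives only on $W$, extend it as a smooth complex bundle to $\Sigma$ with some holomorphic structure; this is possible because bundles on a compact surface are classified by their degree. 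Then tensor with a sufficiently positive line bundle $L$ on $\Sigma$ that is holomorphically trivial near $D$. For example, take $L=\mathcal O(N\cdot p_0)$ with $p_0\notin W$ and $N\gg0$, so that $L$ has a nonvanishing holomorphic section on $W$. The resulting bundle $E$ satisfies $H^1(\Sigma,E)=0$ and agrees with $E_\Delta$ over a neighborhood of $D$. Fix this $E$, its metrics, its right inverse $T$, and $\rho$.

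Next fix a cutoff $\chi$ equal to $1$ near $D$ and supported in $W$. Define $A_r$ as $\chi$ times the remainder of the twisted operator, written in the holomorphic frames. This keeps the bound $M$ uniform and makes $d\mathscr F_r(0)=\dbar_E$ outside $W$. The constant term $A_r(z,0)$ is zero near $K\cup A$ but nonzero on $D\setminus(K\cup A)$, since $s$ is not holomorphic there. It is also nonzero on the cutoff region $W\setminus D$. That region is a problem: the cutoff turns $\mathscr S$ into a different equation outside $D$, and the equation is only required to hold near $D$. To avoid it, we take $\chi=1$ on a neighborhood $D'$ of $D$ and perform the disk replacement on all of $\operatorname{supp}\chi$. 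Alternatively, we first smoothly modify $s$ so that it is holomorphic off a compact set inside $W$. The cleanest version: define the equation only on $D'$, and perform all grafting inside $D'$ away from $K\cup A$. Outside $D'$ we set $A_r(z,0)=0$ by construction, interpolating the constant term to zero with $\chi$. The resulting section is then holomorphic only on $D'$, which is all that is needed.

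\emph{Grafting.} Take $P=\overline{D'}\setminus U$, where $U$ is a small neighborhood of $K\cup A$ on which $s$ is holomorphic. Take $\Phi(z,q)=\Psi(s(z),\iota q)$ in local frames. Apply Lemma~\ref{lem:rel-uniform-matching} to obtain uniform constants. For a scale $r$, choose finitely many disjoint disks $\Delta^+_{c_i,\sigma_i}\subset P$ with $\sigma_i\le r$, whose inner disks cover $P$ up to area $o(1)$; for example, use a Vitali-type packing at scale $r$. On each disk, replace the section by the matched family $M_{c_i,\sigma_i}$; this defines a new equation on the same bundle $E$. In the $q$-parametrization, the coefficient is $C^M$:
\begin{itemize}
\item on the inner disks it vanishes identically, together with its $q$-derivatives, by joint holomorphy;
\item on the transition annuli it is bounded uniformly by \eqref{eq:rel-physical-bounds};
\item on the uncovered remainder it equals $C^\Phi$, which is bounded.
\end{itemize}
Hence $\mathscr F_r(0)$ and $d\mathscr F_r(0)-\dbar_E$ are supported on a set $S_r$. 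This set consists of the annuli, of total area $\lesssim\sum\sigma_ih_i\le r^{1/2}\sum\sigma_i^2$, together with the uncovered set, whose area tends to $0$. By \eqref{eq:tools-small-area} we then get $\|\mathscr F_r(0)\|_{L^4}\lesssim\operatorname{area}(S_r)^{1/4}$ and $\|d\mathscr F_r(0)-\dbar_E\|_{W^{1,4}\to L^4}\lesssim C_S\operatorname{area}(S_r)^{1/4}$. Here we use that the inverse matrices $(\partial_q\Xi)^{-1}$ convert the $q$-equation back to the $u$-frame, with the bounds of \eqref{eq:rel-uniform-match}. Lemma~\ref{lem:fixed-bundle-solution} then gives a smooth small solution $v_r$. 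Set $F=M(z,v_r)$, reading this as $\Psi(s,\iota v_r)$ off the disks. Then $F$ is a smooth section with $\dbar F=0$ on $D'$, hence holomorphic.

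\emph{Estimates and jets.} Since $\|v_r\|_{C^0}\lesssim\varepsilon_r\to0$ and $K$ lies in the ungrafted region, $\sup_Kd_X(F,s)<\epsilon$ for small $r$. Near $a\in A$ we have $u=z^{k_a+1}v_r$ with $v_r$ bounded, and $F$ and $s$ are both holomorphic there. Thus $\phi\circ F-\phi\circ s=O(|z|^{k_a+1})$, and since this difference is holomorphic it is divisible by $z^{k_a+1}$, which gives $j^{k_a}_aF=j^{k_a}_as$.

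\emph{Main obstacle.} The hardest step is making the equation on the grafted disks a perturbation of the \emph{same} operator $\dbar_E$. Inside a disk the parametrization $q$ is not the original $u$-frame, and the linear term in the inner core is not controlled by $\|d_qH\|^{-1}$. We resolve this as the paper suggests: on each disk we write the equation in the frame supplied by $q$ itself, so that on the inner disk the equation is exactly $\dbar v=0$, and the frame mismatch is absorbed into the annular coefficient. The first thing to check is that this frame change is compatible with $E$ and preserves the uniform $M$-bounds; this is precisely what \eqref{eq:rel-physical-bounds} is designed to provide.
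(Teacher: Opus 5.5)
The fixed-bundle step is a sound variant of the paper's. The paper trivializes $E_\Delta$ on $W$ by Grauert's theorem and extends it as $\mathcal O_\Sigma(Q)^{\oplus n}$; you instead extend the holomorphic structure and kill $H^1$ by twisting with $\mathcal O(Np_0)$. The real reason the extension exists is that Cauchy--Riemann operators form an affine space and can be patched with a partition of unity, not classification by degree. Your grafting, small-area estimates, reconstruction, and jet argument are those of Steps~3 and~5--7 of the paper.

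The genuine gap is the one you flagged yourself and did not close: the residual of the cut-off equation where $0<\chi<1$. With coefficient $\chi A_\Delta$ for a fixed cutoff $\chi$, the constant term $\chi(z)A_\Delta(z,0)$ is in general nonzero on $\{0<\chi<1\}$, since $s$ is an arbitrary smooth section there. This set has positive area independent of $r$. So $\|\mathscr F_r(0)\|_{L^4}$ stays bounded below, and Lemma~\ref{lem:fixed-bundle-solution} cannot be invoked. Your set $S_r$ (annuli plus the uncovered part of $\overline{D'}\setminus U$) simply leaves this region out.

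None of your three remedies repairs this as stated:
\begin{itemize}
\item Rational disks centered where $\chi<1$ do not glue. By Lemma~\ref{lem:rel-uniform-matching}(iii) their outer-collar coefficient is $C^\Phi=A_\Delta$, not $\chi A_\Delta$.
\item Making $s$ holomorphic near $\partial W$ in advance is itself an approximation problem on a collar, which is what is being proved.
\item ``Interpolating the constant term to zero with $\chi$'' only moves the nonzero residual into the interpolation zone.
\end{itemize}

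The paper's missing device is the auxiliary disks of Step~4. Let $e_0$ be the residual of the cut-off equation. On the part of $\operatorname{supp}e_0$ not covered by the rational-disk center sets, a compact set disjoint from $D_1$, the paper places disks on which only the equation is changed. The coefficient $C$ is replaced by $(1-\eta_{c,\sigma})C$ on the annulus and by $0$ inside, with no $X$-valued family. Since $C_v(z,0)=0$, the linearization stays exactly $\dbar_E$ and the fiber bounds stay uniform. This residual therefore also lives on a set of area $O(r^{1/2})$. Reconstruction is performed only on $N_0$, which meets no auxiliary disk.

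Alternatively, Lemma~\ref{lem:fixed-bundle-solution} needs no source-derivative bounds uniform in $r$. So you could use an $r$-dependent cutoff $\chi_r$, equal to one on the grafting region, whose transition collar has area tending to zero. With either device in place your argument goes through.
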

	
	\begin{proof}
		The pullback $p$ has the relative first-jet sphere-family property by
		Remark~\ref{rem:rel-first-jet-base-change}.
		Properness of $p$ makes $p(X)$ closed in $R$, and the local projection
		form of a holomorphic submersion makes $p(X)$ open.
		The section $s$ is defined near the nonempty bordered surface $D$, so
		$p(X)\ne\varnothing$.
		Connectedness of $R$ therefore gives $p(X)=R$.
		If the fibers are zero-dimensional, every nonempty fiber is a finite
		discrete set and hence, by connectedness, a singleton.  The proper
		holomorphic submersion $p$ is then a bijective local biholomorphism,
		and hence a biholomorphism.
		The inverse $p^{-1}:R\to X$ is holomorphic, and every local section
		of $p$ equals the restriction of $p^{-1}$.  Taking $F=p^{-1}$ proves
		the zero-dimensional case.  We may therefore assume that the fiber dimension is
		positive.
		
		We proceed in seven steps.
		
		\medskip\noindent
		\emph{Step 1: embed a larger source neighborhood in a compact surface.}
		Choose connected compact bordered Riemann surfaces
		\[
		D\Subset\operatorname{int}D_0,
		\qquad D_0\Subset\operatorname{int}D_1,
		\]
		inside the neighborhood on which $s$ is defined.
		Lemma~\ref{lem:compact-neighborhood} embeds a neighborhood $W$ of $D_1$
		holomorphically into a compact Riemann surface $\Sigma$.  We identify $W$
		with its image. Write $g_\Sigma$ for the genus of $\Sigma$. Fix a smooth area form $\omega_\Sigma$ on $\Sigma$ and Hermitian metrics
		on the bundles used below.  All areas and Sobolev norms refer to these
		fixed choices; the finitely many source-coordinate area forms are uniformly
		equivalent to $\omega_\Sigma$.  By first replacing $W$ by a smaller connected precompact
		neighborhood of $D_1$, we may assume that $\overline W$ lies in the original domain
		of $s$.  Fix the vertical local addition of Lemma~\ref{lem:rel-addition}
		over $p^{-1}(\overline W)$.  All reconstructions of sections of $X$ below
		will be performed in $W$; auxiliary modifications will avoid the
		reconstruction neighborhood chosen in Step~4.
		
		Set $\Delta=\sum_{a\in A}(k_a+1)a$.  Lemma~\ref{lem:rel-twist} gives $E_{\Delta}$ and
		the operator~\eqref{eq:rel-twisted-form} on $W$.  By
		the deformation retraction onto a graph in Subsection~\ref{subsec:surface-tools}, $W$ deformation retracts onto a
		countable locally finite graph $\Gamma$.  Every complex vector bundle on
		$\Gamma$ is trivial: choose frames at the vertices and extend them along
		the edges, which is possible because $\operatorname{GL}_n(\C)$, with $n$ the bundle rank, is path
		connected; local finiteness makes the resulting frame continuous.
		Pullback along the deformation retraction therefore shows that every
		complex vector bundle on $W$ is topologically trivial.  By
		the Behnke--Stein theorem \cite[Corollary~26.8]{For81}, $W$ is a Stein Riemann surface, and
		Grauert's Oka principle \cite{Grauert58} makes $E_{\Delta}$ holomorphically trivial.  Fix
		a holomorphic trivialization of $E_{\Delta}$ on $W$.
		
		\medskip\noindent
		\emph{Step 2: extend the bundle to $\Sigma$ with vanishing first cohomology.}
		After shrinking $W$ slightly while retaining $D_1$, the complement of $\overline W$ in
		$\Sigma$ is nonempty.  Choose an effective divisor $Q$ on $\Sigma$ supported in
		$\Sigma\setminus\overline W$, of degree greater than $2g_\Sigma-2$, and put
		\[
		\widetilde E=\mathcal O_\Sigma(Q)^{\oplus n},
		\qquad n=\dim X_z.
		\]
		The canonical section of $\mathcal O_\Sigma(Q)$ is nonzero on $W$, and therefore trivializes $\mathcal O_\Sigma(Q)|_W$.  Using the chosen trivialization of
		$E_{\Delta}$, identify
		\[
		\widetilde E|_W\cong E_{\Delta}.
		\]
		Serre duality gives
		\[
		H^1(\Sigma,\mathcal O_\Sigma(Q))
		\cong H^0(\Sigma,K_\Sigma(-Q))^*=0,
		\]
		because $\deg K_\Sigma(-Q)=2g_\Sigma-2-\deg Q<0$.
		A nonzero holomorphic section of a line bundle has an effective zero
		divisor of degree equal to the bundle degree, so a negative-degree line
		bundle has no nonzero holomorphic section.  Hence
		\begin{equation}\label{eq:rel-H1-zero}
			H^1(\Sigma,\widetilde E)=0.
		\end{equation}
		
		Take a smooth cutoff $\chi$ supported in $W$ and equal to one on a
		neighborhood of $D_1$.  Under $\widetilde E|_W\cong E_{\Delta}$ from Step~2, extend the nonlinear
		coefficient by
		\[
		\widetilde A(z,v)=\chi(z)A_{\Delta}(z,v)
		\]
		on a fixed small fiber ball and set $\widetilde A=0$ outside $W$.  Compact support of $\chi$ makes
		$\widetilde A$ a
		smooth bundle-valued $(0,1)$-form, holomorphic in $v$, and
		\[
		\widetilde A_v(z,0)=0.
		\]
		On the compact surface, put
		\begin{equation}\label{eq:rel-global-operator}
			\widetilde{\mathscr S}(v)
			=\dbar_{\widetilde E}v+\widetilde A(z,v).
		\end{equation}
		The operator $\widetilde{\mathscr S}$ agrees with $\mathscr S_{\Delta}$ on a neighborhood of
		$D_1$, while outside that neighborhood $\widetilde{\mathscr S}=0$ is only
		an auxiliary equation.  Since $\widetilde A_v(z,0)=0$, the linearization
		$d\widetilde{\mathscr S}(0)$ is exactly $\dbar_{\widetilde E}$.
		
		By~\eqref{eq:rel-H1-zero}, fix a bounded right inverse $T$ of
		$\dbar_{\widetilde E}:W^{1,4}(\Sigma,\widetilde E)\to
		L^4_{0,1}(\Sigma,\widetilde E)$ as in the proof of
		Lemma~\ref{lem:fixed-bundle-solution}.  This choice precedes all disk
		selections.
		
		\medskip\noindent
		\emph{Step 3: prepare the rational disks.}
		Let
		\[
		e_0=\widetilde{\mathscr S}(0).
		\]
		The support of $e_0$ is disjoint from a fixed neighborhood of $K\cup A$.  Choose
		an open set $W_1$ such that
		\[
		D_1\Subset W_1,
		\qquad
		\overline W_1\Subset\{z\in W:\chi(z)=1\}.
		\]
		After decreasing the fixed neighborhood of $K\cup A$, the compact set
		\[
		P=\operatorname{supp}e_0\cap D_1
		\]
		is disjoint from $\operatorname{supp}\Delta$.  Use the holomorphic
		trivialization of $E_{\Delta}$ fixed in Step~1, and its identification with
		$\widetilde E$ over $W$, to regard $q$ as a fixed holomorphic bundle
		coordinate.  Compactness of $\overline W_1$ and the domain of the
		fixed local addition give $d_0>0$ such that, on $W_1$, the family
		\[
		\Phi(z,q)=\Psi(s(z),\iota_z q)
		\]
		is defined for every $q\in B_{2d_0}$.
		At $q=0$,
		\[
		d_q\Phi(z,0)=d_u\Psi(s(z),0)\circ\iota_z=\iota_z.
		\]
		The map $\iota_z$ is an isomorphism for $z\in P$, because
		$P\cap\operatorname{supp}\Delta=\varnothing$.  Lemma~\ref{lem:rel-uniform-matching}
		therefore supplies finitely many source and target charts, first-jet
		families, parameter domains, and common constants. The resulting
		holomorphic disk families are constructed from rational curves; we call
		the disks used in this part of the construction rational disks.
		Every such disk will have its center in $\{e_0\ne0\}\cap W_1$ and its
		closed outer disk contained in the corresponding source set $V_\nu$.  We use exactly the inner disk, transition annulus,
		cutoff, prepared family $H_{c,\sigma}$, target-valued matched family
		$M_{c,\sigma}$, and coefficient $C^M_{c,\sigma}$ from Lemma~\ref{lem:rel-uniform-matching}.
		
		The chain rule in the selected target coordinate gives, for every smooth
		$q(z)\in B_d$ on the transition annulus of a rational disk,
		\begin{equation}\label{eq:rel-factorization}
			\dbar\!\left(\phi_\nu\circ M_{c,\sigma}(z,q(z))\right)
			=\partial_q\Xi_{c,\sigma}(z,q(z))
			\bigl(\dbar q+C^M_{c,\sigma}(z,q(z))\bigr).
		\end{equation}
		The formula is an identity of classical forms for smooth $q$ and of
		distributional forms for $q\in W^{1,4}$ with values in $B_d$, by the
		Sobolev chain rule.  The target-coordinate inverse is
		legitimate by the positive chart margin in
		Lemma~\ref{lem:rel-uniform-matching}.
		
		\medskip\noindent
		\emph{Step 4: prepare auxiliary disks.}
		Choose once and for all an open neighborhood $N_0$ of $D$ with
		\[
		D\subset N_0,\qquad \overline N_0\Subset\operatorname{int}D_0.
		\]
		Reconstruction will be restricted to $N_0$. For the auxiliary disks,
		we modify only the equation on $\Sigma$ and define no $X$-valued family. The divisor $\Delta$ records the interpolation
		orders and is distinct from the bordered source region $D$.
		
		Since the sets $V_\nu'$ cover
		$P=\operatorname{supp}e_0\cap D_1$, the compact set
		\[
		P_0=\operatorname{supp}e_0\setminus\bigcup_\nu V_\nu'
		\]
		is disjoint from $D_1$, and hence from $\overline N_0$.  Cover
		$P_0$ by finitely many source-coordinate neighborhoods whose
		closures avoid $\overline N_0$, and choose a
		$\dbar_{\widetilde E}$-holomorphic frame on each.  In each such frame the
		auxiliary equation~\eqref{eq:rel-global-operator} is
		\[
		\dbar v+C(z,v)=0,
		\qquad C_v(z,0)=0.
		\]
		For a center $c$ in such a frame and a sufficiently small $\sigma$, use
		the three regions in~\eqref{eq:rel-disk-geometry}, the fixed cutoff
		$\eta_{c,\sigma}$, and set on the transition annulus
		\begin{equation}\label{eq:rel-artificial}
			C^\circ_{c,\sigma}(z,v)
			=(1-\eta_{c,\sigma}(z))C(z,v).
		\end{equation}
		Thus the coefficient is exactly $C$ on an open outer collar and exactly
		zero on an open inner collar.  More precisely, use the zero coefficient on
		$\{|\zeta|<\sigma-h_\sigma/2\}$, the coefficient
		$C^\circ_{c,\sigma}$ on $\mathcal T_{c,\sigma}$, and the original
		coefficient $C$ on $\{|\zeta|>\sigma+h_\sigma/2\}$, with all sets
		intersected with the assigned frame domain.  These three open sets cover
		that domain near the disk, and the formulas agree on both open overlaps.
		The three coefficient formulas therefore glue smoothly.  The finite choice of
		frames and the fact that the cutoff is independent of $v$ give uniform
		bounds through two fiber derivatives.  Since $C_v(z,0)=0$, one has $(C^\circ_{c,\sigma})_v(z,0)=0$,
		so the linearization of the modified auxiliary equation remains
		$\dbar_{\widetilde E}$.  Auxiliary disks carry no $X$-valued family.
		
		\medskip\noindent
		\emph{Step 5: select the disks.}
		If $e_0$ vanishes identically, then on a neighborhood of $D$ the identity
		$\widetilde{\mathscr S}(0)=\mathscr S_{\Delta}(0)=0$ says that $s$ is already
		holomorphic.  Taking $F=s$ proves the theorem.  We henceforth assume that
		$U=\{e_0\ne0\}$ is nonempty.
		
		Choose a finite smooth triangulation of $\Sigma$, whose existence follows
		from~\cite[Theorem~10.6]{Mun66}, and subdivide the triangulation so finely that every
		closed triangle meeting $\operatorname{supp}e_0$ is contained either in
		one smaller center set $V_\nu'$ inside $W_1$ or in one of
		the auxiliary frame neighborhoods chosen in Step~4.  The center sets from Step~3 and frame neighborhoods from Step~4 form a finite open cover of
		$\operatorname{supp}e_0$.  Compactness gives a positive Lebesgue number
		for this cover on a neighborhood of $\operatorname{supp}e_0$;
		subdivision with smaller mesh gives the stated containment. Denote these
		closed triangles by $T_1,\ldots,T_M$. Assign one such choice of local data
		and one of the two types to each $T_j$. The union of the triangle edges has area
		zero.  In particular, every triangle assigned the auxiliary type, and
		hence every auxiliary disk selected in that triangle, is disjoint from
		$\overline N_0$.
		
		Before choosing the disk scale, fix the parameter domain of the global
		equation.  Let $d_*>0$ be smaller than all coordinate-parameter radii used
		above.  Since the finitely many assigned frames are fixed on compact
		source sets, there is a constant $C_F\geq1$ such that the coordinate vector
		$q$ of any fiber vector $v$ satisfies $|q|\leq C_F|v|$ in each applicable
		frame.  Choose a fixed metric radius $\rho>0$ with
		$C_F\rho<d_*$, and also within the domain of $\widetilde A$.
		These choices depend only on the fixed local data, before $r$ and the disks
		are selected.
		
		Choose $0<r<1/16$, to be decreased in Step~6.  Then
		$h_\sigma=\sigma^{3/2}<\sigma/4$ whenever $0<\sigma\leq r$.
		For $1\leq j\leq M$ and $c\in U\cap\operatorname{int}T_j$, take as candidate sets all disks
		$\Delta^+_{c,\sigma}$ from~\eqref{eq:rel-disk-geometry} such that
		\[
		0<\sigma\leq r,\qquad
		\overline{\Delta^+_{c,\sigma}}
		\subset U\cap\operatorname{int}T_j,
		\]
		and such that the closed disk remains in the corresponding larger
		set $V_\nu$ or in the assigned auxiliary frame. For a disk assigned to
		Step~3, also require
		$\sigma<\sigma_*$ from Lemma~\ref{lem:rel-uniform-matching}.  Every
		condition is open at the center, so these candidates contain arbitrarily
		small disks and form a fine Vitali cover of $U$ away from the null set of
		triangle edges.
		
		In the assigned coordinate for $T_j$, restrict the fixed area measure to
		$T_j$ and extend it by zero to $\mathbb R^2$. This is a finite Radon measure.
		Apply the covering theorem~\cite[Section~1.5.2, Theorem~1.28]{EG15} to the
		closures of the candidate disks with this measure. It gives a countable
		pairwise disjoint family covering $U\cap\operatorname{int}T_j$ up to a null set.
		In each $T_j$, take a finite initial subfamily for which the remaining
		uncovered area is at most
		$r^{1/2}/M$.  The combined selection is finite, each closed outer disk
		lies in its assigned triangle interior, and the closed outer disks are
		pairwise disjoint. Index the selected disks by $i$, and let $\zeta_i$ be
		the assigned source coordinate centered at $c_i$. Moreover,
		\begin{equation}\label{eq:rel-uncovered}
			\operatorname{area}\left(
			U\setminus\bigcup_i\Delta^+_{c_i,\sigma_i}\right)
			\leq r^{1/2}.
		\end{equation}
		Write $h_i=\sigma_i^{3/2}$ and let $\mathcal T_r$ be the union of the
		transition annuli.  The actual outer radius is $\sigma_i+h_i$, which lies
		between $\sigma_i$ and $5\sigma_i/4$.  In each assigned coordinate, the outer disk has Euclidean area
		$\pi(\sigma_i+h_i)^2\geq\pi\sigma_i^2$, and the annulus has area
		\[
		\pi\bigl((\sigma_i+h_i)^2-(\sigma_i-h_i)^2\bigr)=4\pi\sigma_i h_i.
		\]
		Comparability with the fixed area form and disjointness imply
		$\sum_i\sigma_i^2\leq C\operatorname{area}(\Sigma)$.
		Absorbing $\operatorname{area}(\Sigma)$ into the fixed constant gives
		\begin{equation}\label{eq:rel-area}
			\sum_i\sigma_i^2\le C,
			\qquad
			\operatorname{area}(\mathcal T_r)
			\le C\sum_i\sigma_i h_i
			=C\sum_i\sigma_i^2\sigma_i^{1/2}
			\le Cr^{1/2},
		\end{equation}
		after enlarging $C$.  Since every closed outer disk is contained in $U$,
		no selected disk meets the fixed neighborhood of $K\cup A$ on which
		$e_0=0$.  For later gluing, put
		\[
		\mathcal I_i=
		\{z\in\Delta^+_{c_i,\sigma_i}:
		|\zeta_i|<\sigma_i-h_i/2\},
		\qquad
		\mathcal O_r=\Sigma\setminus\bigcup_i
		\{z\in\Delta^+_{c_i,\sigma_i}:
		|\zeta_i|\leq\sigma_i+h_i/2\}.
		\]
		The sets $\mathcal O_r$, $\mathcal I_i$, and the individual transition
		annuli form an open cover of $\Sigma$.  Every selected outer disk is
		contained in $\mathcal I_i\cup\mathcal T_{c_i,\sigma_i}$.
		
		Define the global modified coefficient as follows.  On the ball bundle
		$\widetilde E(\rho)=\{\xi\in\widetilde E:|\xi|<\rho\}$, use
		$\widetilde A$ from~\eqref{eq:rel-global-operator} over $\mathcal O_r$,
		use the zero coefficient over every $\mathcal I_i$, use
		$C^M_{c_i,\sigma_i}$ over the transition annulus of a rational disk in the
		fixed holomorphic frame from Step~1, and use
		$C^\circ_{c_i,\sigma_i}$ over an auxiliary transition annulus in its
		assigned $\dbar_{\widetilde E}$-holomorphic frame.  Call the bundle map
		obtained by gluing these formulas
		\[
		\widetilde A_r:\widetilde E(\rho)\longrightarrow
		\Lambda^{0,1}T^*\Sigma\otimes\widetilde E.
		\]
		On the outer collar of a rational disk,
		$C^M_{c_i,\sigma_i}=C^\Phi$ by
		Lemma~\ref{lem:rel-uniform-matching}.  Lemmas~\ref{lem:rel-normal-form} and~\ref{lem:rel-twist} show
		that $C^\Phi$ is exactly the coefficient $A_{\Delta}$ of the twisted equation;
		because its assigned source set lies where $\chi=1$, one has
		$C^\Phi=A_{\Delta}=\widetilde A$ on that outer collar. On its inner collar the coefficient is exactly
		zero.  The corresponding equalities for auxiliary disks follow from
		\eqref{eq:rel-artificial}.  Thus all local definitions agree on open
		collars.  Since the disks are disjoint and every modification equals the
		original coefficient near its outer boundary, $\widetilde A_r$ is a
		globally defined smooth bundle map.  Moreover,
		\begin{equation}\label{eq:rel-global-fiber-bounds}
			\sup_{\widetilde E(\rho)}
			\bigl(|\widetilde A_r|+|d_v\widetilde A_r|
			+|d_v^2\widetilde A_r|\bigr)\leq C,
		\end{equation}
		with $C$ independent of the number and radii of the disks.  For each fixed
		$r$, the source derivatives of $\widetilde A_r$ are bounded, by the open-collar
		gluing, but no source-derivative bound uniform in $r$ is asserted.
		
		At $v=0$, $\widetilde A_r(z,0)$ vanishes on every $\mathcal I_i$, while outside
		the selected outer disks $\widetilde A_r(z,0)$ equals the original coefficient.  Since every
		selected outer disk lies in $U$, the residual of the modified equation is
		therefore zero outside the measurable set
		\[
		\mathcal E_r=
		\left(U\setminus\bigcup_i\Delta^+_{c_i,\sigma_i}\right)
		\cup\mathcal T_r.
		\]
		Only $\operatorname{area}(\mathcal E_r)$ is estimated; no estimate
		for $\operatorname{area}(\overline{\mathcal E_r})$ is used.
		
		Set
		\[
		\mathcal X=W^{1,4}(\Sigma,\widetilde E),
		\quad
		\mathcal Y=L^4_{0,1}(\Sigma,\widetilde E).
		\]
		Sobolev embedding makes
		\[
		\mathcal B=\{v\in\mathcal X:\|v\|_{C^0}<\rho\}
		\]
		an open neighborhood of zero.  Put
		\begin{equation}\label{eq:rel-global-modified-operator}
			\mathscr S_r(v)=\dbar_{\widetilde E}v
			+\widetilde A_r(z,v(z)),
			\qquad \mathscr S_r:\mathcal B\longrightarrow\mathcal Y.
		\end{equation}
		The fiber bounds give the $C^2$ property and uniform nonlinear estimate
		of Lemma~\ref{lem:fixed-bundle-solution}.  No derivative of a grafted
		$X$-valued map occurs in~\eqref{eq:rel-global-modified-operator}.
		
		\medskip\noindent
		\emph{Step 6: uniform estimates and solution.}
		By Step~5, $\mathscr S_r(0)$ vanishes outside $\mathcal E_r$.  The residual vanishes on every $\mathcal I_i$ and on the unmodified
		part of $\{e_0=0\}$.  The pointwise norm of $\mathscr S_r(0)$ is uniformly bounded by
		\eqref{eq:rel-global-fiber-bounds}.  Hence
		\eqref{eq:rel-uncovered}--\eqref{eq:rel-area} give
		\[
		\|\mathscr S_r(0)\|_{\mathcal Y}^4
		\le C\bigl(r^{1/2}+\operatorname{area}(\mathcal T_r)\bigr)
		\le Cr^{1/2},
		\]
		so
		\begin{equation}\label{eq:rel-residual}
			\|\mathscr S_r(0)\|_{\mathcal Y}\le Cr^{1/8}.
		\end{equation}
		On $\mathcal O_r$ and every $\mathcal I_i$, the linearization at zero is
		$\dbar_{\widetilde E}$.  On auxiliary annuli, the linearization also equals $\dbar_{\widetilde E}$
		by~\eqref{eq:rel-artificial} and $C_v(z,0)=0$.  Consequently
		$d\mathscr S_r(0)-\dbar_{\widetilde E}$ is multiplication by a uniformly
		bounded zero-order coefficient vanishing outside the transition
		annuli of the rational disks.  For $\xi\in\mathcal X$,
		\[
		\|(d\mathscr S_r(0)-\dbar_{\widetilde E})\xi\|_{L^4}
		\leq C\operatorname{area}(\mathcal T_r)^{1/4}\|\xi\|_{C^0}.
		\]
		The Sobolev embedding $W^{1,4}\hookrightarrow C^0$ and
		\eqref{eq:rel-area} therefore give
		\begin{equation}\label{eq:rel-linear}
			\|d\mathscr S_r(0)-\dbar_{\widetilde E}\|_{\mathcal X\to\mathcal Y}
			\le Cr^{1/8}.
		\end{equation}
		Apply Lemma~\ref{lem:fixed-bundle-solution} to $\widetilde E$ and
		$\widetilde A_r$, using~\eqref{eq:rel-H1-zero},
		\eqref{eq:rel-global-fiber-bounds}, \eqref{eq:rel-residual}, and
		\eqref{eq:rel-linear}, with $\varepsilon_r=C_1r^{1/8}$ for a fixed
		sufficiently large $C_1$.  Lemma~\ref{lem:fixed-bundle-solution} gives a smooth solution $v_r$ satisfying
		\begin{equation}\label{eq:rel-solution-bound}
			\mathscr S_r(v_r)=0,
			\qquad \|v_r\|_{W^{1,4}}\leq Cr^{1/8}.
		\end{equation}
		For each selected disk, let $q_i(z)$ be the coordinate vector of $v_r(z)$
		in its assigned frame. Let $C_S$ be the norm of a fixed Sobolev embedding
		$\mathcal X\hookrightarrow C^0$. Then
		\[
		|q_i(z)|\leq C_F|v_r(z)|
		\leq C_FC_SCr^{1/8}<d_*
		\]
		after $r$ is decreased.  Thus every coordinate parameter lies in the
		ball used to construct its assigned disk.
		Smoothness uses the open-collar gluing for each fixed $r$, without uniform
		source-derivative estimates as $r\to0$.
		
		\medskip\noindent
		\emph{Step 7: reconstruct the section.}
		Put $N=N_0$.  By the choice made in Steps~4--5, $N$ meets no auxiliary
		disk.
		For each selected rational disk, put
		\[
		U_i^-=N\cap\mathcal I_i,\qquad
		U_i^0=N\cap\mathcal T_{c_i,\sigma_i}.
		\]
		Also set
		\[
		U^+=N\cap\mathcal O_r.
		\]
		The sets $U^+$, $U_i^-$, and $U_i^0$ cover $N$.  Let $F_r$ be the section given by
		\begin{equation}\label{eq:rel-reconstruction}
			F_r(z)=
			\begin{cases}
				H_{c_i,\sigma_i}(z,q_i(z)),
				&z\in U_i^-,\\
				M_{c_i,\sigma_i}(z,q_i(z)),
				&z\in U_i^0,\\
				\Psi(s(z),\iota_z v_r(z)),
				&z\in U^+
			\end{cases}.
		\end{equation}
		On
		$U_i^-\cap U_i^0$ the cutoff is identically one, so
		$M_{c_i,\sigma_i}=H_{c_i,\sigma_i}$ for every $q$.  On
		$U_i^0\cap U^+$ the cutoff is identically zero, so
		$M_{c_i,\sigma_i}=\Phi$.  Thus the formulas agree on open collars, not
		merely on boundary circles.  The three formulas give a smooth map $F_r:N\to X$.
		Each branch satisfies $p(F_r(z))=z$, so $F_r$ is a section of $p$.
		
		We check holomorphy on each type of open region.  On an outer
		region the definitions of $\mathscr S_s$ and $\mathscr S_{\Delta}$ give
		\[
		\dbar F_r
		=d_u\Psi(s,\iota v_r)\circ\iota\,
		\mathscr S_{\Delta}(v_r)=0,
		\]
		because $\mathscr S_r=\mathscr S_{\Delta}$ on $U^+$.  On a transition annulus,
		\eqref{eq:rel-factorization} and
		$\mathscr S_r(v_r)=0$ give
		\[
		\dbar(\phi_\nu\circ F_r)
		=\partial_q\Xi_{c_i,\sigma_i}(z,q_i(z))
		\bigl(\dbar q_i+
		C^M_{c_i,\sigma_i}(z,q_i(z))\bigr)=0.
		\]
		On an inner region the local equation is $\dbar q_i=0$.  Since
		$H_{c_i,\sigma_i}$ is jointly holomorphic,
		\[
		\dbar F_r=d_qH_{c_i,\sigma_i}(z,q_i(z))\,\dbar q_i=0.
		\]
		The core reconstruction uses only joint holomorphy of $H_{c_i,\sigma_i}$.  All three displayed $\dbar F_r=0$ identities are classical because
		$v_r$ is smooth.  These open regions cover $N$, and their formulas
		agree on the open collars, so the identities prove that $F_r$ is holomorphic on $N$.
		
		Let $V_{K,A}\subset N$ be a fixed neighborhood of $K\cup A$ on which
		$e_0=0$, as chosen before the disk selection.  Since every selected disk
		is contained in $U=\{e_0\ne0\}$, it is disjoint from $V_{K,A}$.
		On $V_{K,A}$,
		\[
		F_r(z)=\Psi(s(z),\iota v_r(z)).
		\]
		On a smaller compact neighborhood of $K$ inside $V_{K,A}$, the uniform derivative bound for $\Psi$,
		Sobolev embedding,
		and~\eqref{eq:rel-solution-bound} yield
		\[
		\sup_K d_X(F_r,s)\leq C_\Psi\sup_K\|\iota_z\|\,\|v_r\|_{C^0}
		\leq Cr^{1/8}.
		\]
		Choose $r$ so small that $Cr^{1/8}<\epsilon$.  Near $a\in A$, choose a
		local coordinate $z$ centered at $a$.  In compatible holomorphic frames, the
		local-addition parameter is
		\[
		\iota v_r=z^{k_a+1}v_r
		\]
		and $\Psi(s,u)=s+O(u)$ in target coordinates.  Since $F_r$ and $s$ are holomorphic near $a$, the difference
		$h(z)=\phi(F_r(z))-\phi(s(z))$ in a common target chart satisfies
		$|h(z)|\leq C|z|^{k_a+1}$.  Hence $h(z)/z^{k_a+1}$ is bounded and
		holomorphic on the punctured disk and extends across the origin.  The
		Taylor coefficients of $h$ through degree $k_a$ vanish, so $j_a^{k_a}F_r=j_a^{k_a}s$.
		This proves the theorem.
	\end{proof}
	
	The sets $K$ and $A$ need not contain one another. Either may be empty;
	when $K$ is empty the approximation condition is omitted.
	
	\subsection{Exhaustion and the lifting homotopy}
	\begin{proposition}
		\label{prop:rel-open-lifting}
		Let $\pi:Z\to Y$ be a surjective proper holomorphic submersion with
		connected and simply connected fibers and the relative first-jet
		sphere-family property.  Let $R$ be an open Riemann surface, let $K\subset R$ be
		compact, let $A\subset R$ be closed discrete, and let $g:R\to Y$
		be holomorphic.  Suppose that $f:R\to Z$ is a continuous lifting of $g$ and
		is holomorphic near $K\cup A$.  Given $\epsilon>0$ and
		$q:A\to\mathbb N$, there is a holomorphic lifting $F:R\to Z$ such that
		\[
		\sup_K d_Z(F,f)<\epsilon,
		\qquad j_a^{q(a)}F=j_a^{q(a)}f\quad(a\in A).
		\]
	\end{proposition}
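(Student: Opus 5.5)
We pass to the pullback $p:X=R\times_YZ\to R$ and the section $s_f$. By Ehresmann's theorem $p$ is a smooth fiber bundle with connected, simply connected fibers. By Remark~\ref{rem:rel-first-jet-base-change} it has the relative first-jet sphere-family property. It suffices to produce a holomorphic section $F$ of $p$ with $\sup_K d_X(F,s_f)<\epsilon$ and matching jets on $A$; then $\operatorname{pr}_Z\circ F$ is the required lifting, since $d_Z\le d_X$ fiberwise and jets of $\operatorname{pr}_Z\circ F$ are determined by jets of $F$.

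First we apply the relative smoothing statement of Subsection~\ref{subsec:surface-tools}. It replaces $s_f$ by a smooth section, unchanged near $K\cup A$. Next we choose an exhaustion $C_1\Subset C_2\Subset\cdots$ of $R$ by connected compact bordered surfaces. We take $K\subset\operatorname{int}C_1$ and $\partial C_j\cap A=\varnothing$, and we also require each $C_j$ to be Runge. Runge-ness is convenient but not needed, since Theorem~\ref{thm:rel-bordered} requires no Runge condition; we use this freedom. Put $A_j=A\cap C_j$, which is finite. Choose $\epsilon_j>0$ with $\sum_j\epsilon_j<\epsilon/2$, with each $\epsilon_j$ decreased further as described below.

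We build inductively sections $F_j$ holomorphic near $C_j$ as follows.
\begin{itemize}
\item Start with $F_1$, obtained from Theorem~\ref{thm:rel-bordered} on $D=C_1$ with compact set $K$, finite set $A_1$, and orders $q(a)$.
\item Given $F_j$, holomorphic near $C_j$, around each point of $A_{j+1}\setminus C_j$ take a small closed coordinate disk $D_a\subset\operatorname{int}(C_{j+1}\setminus C_j)$ on which $s_f$ is holomorphic.
\item Apply Lemma~\ref{lem:rel-top-extension} with $M_0$ a slightly larger bordered neighborhood of $C_j$ and the disks $D_a$. This uses connected, simply connected fibers and yields a smooth section $s_{j+1}$ near $C_{j+1}$. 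It equals $F_j$ near $C_j$ and $s_f$ near each $D_a$, so it is holomorphic near $C_j\cup A_{j+1}$.
\item Apply Theorem~\ref{thm:rel-bordered} to $s_{j+1}$ on $D=C_{j+1}$, with compact set $C_j$ and finite set $A_{j+1}$, with orders $q(a)$ and tolerance $\epsilon_{j+1}$. Call the result $F_{j+1}$.
\end{itemize}
The jets at $A_j$ are inherited from $F_j$, which already matches $s_f$ there, so every $F_{j+1}$ has the correct jets at all of $A_{j+1}$.

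For convergence we argue as follows. The $F_j$ are uniformly Cauchy on each $C_m$, so they converge locally uniformly to a continuous section $F$. Since a uniform limit of holomorphic maps is holomorphic, $F$ is holomorphic. The approximation gives $\sup_Kd_X(F,s_f)\le\sum_j\epsilon_j<\epsilon$.

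The main obstacle is that jets are preserved at each finite stage but must survive the limit. We handle it as follows. Uniform convergence on a small disk around $a\in A_j$ implies convergence of derivatives by Cauchy estimates, and $j_a^{q(a)}F_m=j_a^{q(a)}s_f$ for all $m\ge j$. Hence the limit has the same jet. For this to work, the images must remain in fixed target charts, which we ensure by choosing $\epsilon_{j+1}$ small relative to chart margins around $F_j(C_j)$. A second subtlety is that $\epsilon_{j+1}$ must also keep the limit a section landing in compact tubes. This is automatic, since $p$ is proper and every approximation stays within fibers.

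Finally, the homotopy assertion of Theorem~\ref{thm:rel-analytic-criterion} follows afterwards from Lemma~\ref{lem:rel-section-homotopy}. It applies to $s_f$ and $F$, since both are continuous sections of the fiber bundle $p$ with connected, simply connected fibers. Composing the resulting homotopy with $\operatorname{pr}_Z$ gives a homotopy through continuous liftings of $g$.
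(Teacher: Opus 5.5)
Your proposal is correct and follows essentially the same route as the paper: an exhaustion with boundaries avoiding $A$, and at each stage Lemma~\ref{lem:rel-top-extension} to glue $F_j$ with $s_f$ near the new interpolation points. This is followed by Theorem~\ref{thm:rel-bordered} with approximation set $C_j$ and interpolation set $A_{j+1}$, and then a locally uniform limit (complete because $p^{-1}(C_m)$ is compact) whose jets are preserved by Cauchy estimates.

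The differences are cosmetic. The Runge condition on the $C_j$ is superfluous. You also need not shrink $\epsilon_{j+1}$ to respect chart margins: continuity of the limit already puts all late $F_j$ into one fixed target chart on a small disk, and this is how the paper runs its Weierstrass and Cauchy-integral arguments.
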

	
	\begin{proof}
		Put $k_a=q(a)$.  Work in the pullback $p:X=R\times_Y Z\to R$, with the metric fixed in
		Section~\ref{sec:vertical-preparation}.  Let $s_f(z)=(z,f(z))$ be the
		section corresponding to $f$. The pullback has the sphere-family
		property by Remark~\ref{rem:rel-first-jet-base-change}. Choose a connected exhaustion by
		compact bordered Riemann surfaces
		\[
		C_1\Subset\operatorname{int}C_2\Subset\cdots,
		\qquad \bigcup_j C_j=R,
		\]
		whose boundaries avoid $A$, with $K\subset\operatorname{int}C_1$.
		The exhaustion statement in Subsection~\ref{subsec:surface-tools} supplies this exhaustion for an arbitrary
		compact $K$; neither the $C_j$ nor $K$ need be Runge.  At each step below,
		Theorem~\ref{thm:rel-bordered} likewise permits an arbitrary compact set for approximation.  Put
		$A_j=A\cap C_j$, which is finite.  Choose positive numbers $\epsilon_j$ with
		\[
		\sum_{j\ge1}\epsilon_j<\epsilon.
		\]
		Use $\epsilon_j$ as the error bound at the $j$th approximation step.
		
		We construct sections $F_j$, holomorphic on neighborhoods of $C_j$, such
		that
		\begin{equation}\label{eq:rel-exhaust-a}
			\sup_K d_X(F_1,s_f)<\epsilon_1,
			\qquad
			j_a^{k_a}F_j=j_a^{k_a}s_f\quad(a\in A_j),
		\end{equation}
		and
		\begin{equation}\label{eq:rel-exhaust-b}
			\sup_{C_j} d_X(F_{j+1},F_j)<\epsilon_{j+1}.
		\end{equation}
		For the first step, use the relative smoothing statement in Subsection~\ref{subsec:bundle-tools} to replace the given
		continuous section $s_f$ by a smooth section on a neighborhood of $C_1$, without
		changing $s_f$ near $K\cup A_1$, and apply Theorem~\ref{thm:rel-bordered}.
		
		Assume $F_j$ has been constructed.  Choose pairwise disjoint coordinate
		disks about the finitely many points of $A_{j+1}\setminus A_j$, with their
		closures in $\operatorname{int}C_{j+1}$ and disjoint from a neighborhood of
		$C_j$, on which the original section $s_f$ is holomorphic.  Such disks exist
		because the boundaries of the exhaustion avoid $A$.  Lemma~\ref{lem:rel-top-extension},
		applied to $C_j\subset C_{j+1}$ and these disks, extends the prescribed
		sections $F_j$ near $C_j$ and $s_f$ near the new interpolation points to a
		smooth section $s_{j+1}$ near $C_{j+1}$.  $s_{j+1}$ is holomorphic near
		$C_j\cup A_{j+1}$.  Apply Theorem~\ref{thm:rel-bordered} with approximation
		set $C_j$, interpolation set $A_{j+1}$, and error
		$\epsilon_{j+1}$.  The resulting section is $F_{j+1}$ and
		satisfies~\eqref{eq:rel-exhaust-a}--\eqref{eq:rel-exhaust-b}.
		
		For every fixed $m$ and all $n>\ell\ge m$, the nesting of the exhaustion
		and~\eqref{eq:rel-exhaust-b} give
		\[
		\sup_{C_m} d_X(F_n,F_\ell)
		\leq \sum_{j=\ell}^{n-1}\epsilon_{j+1}.
		\]
		Thus $(F_j)$ is uniformly Cauchy on every compact subset.  For fixed $m$,
		all later sections over $C_m$ take values in the compact set $p^{-1}(C_m)$,
		since $p$ is proper.  The compact metric space $p^{-1}(C_m)$ is complete.  Pointwise limits
		therefore define a map $F_\infty$, and the uniform convergence on each $C_m$
		makes $F_\infty$ continuous.  Passing to the limit in $p(F_j(z))=z$, for $z\in C_m$ and $j\geq m$,
		gives $p(F_\infty(z))=z$.  Since $\bigcup_mC_m=R$, the map
		$F_\infty:R\to X$ is a section.
		
		Fix $x\in R$ and choose a relative target chart $U$ about $F_\infty(x)$.  By
		continuity of $F_\infty$, there are a source coordinate disk $\Delta$ about $x$
		and an open set $U'\Subset U$ such that
		$F_\infty(\overline\Delta)\subset U'$.  The compact set
		$F_\infty(\overline\Delta)$ has positive distance from $X\setminus U$.
		Uniform convergence on $\overline\Delta$ therefore puts
		$F_j(\overline\Delta)$ in $U$ for all sufficiently large $j$.  In the
		vertical coordinates of $U$, the maps $F_j|_\Delta$ are holomorphic and
		converge uniformly to the coordinate expression of $F_\infty$.  The Weierstrass
		theorem proves that $F_\infty$ is holomorphic near $x$, and hence on $R$.
		
		Summing the estimates on
		$K$ in~\eqref{eq:rel-exhaust-a}--\eqref{eq:rel-exhaust-b} gives
		\[
		\sup_K d_X(F_\infty,s_f)<\sum_{j\ge1}\epsilon_j<\epsilon.
		\]
		If $a\in A$, choose $m$ with $a\in A_m$.  For every $j\geq m$, the jet
		$j_a^{k_a}F_j$ equals the prescribed jet $j_a^{k_a}s_f$.  In particular, $F_j(a)=s_f(a)$ for $j\geq m$, so $F_\infty(a)=s_f(a)$.
		Choose a small source disk about $a$ and a relative target chart containing the
		images of $s_f$, $F_\infty$, and every sufficiently late $F_j$ on that disk, as in
		the preceding paragraph.  In a source coordinate centered at $a$, choose a circle $|\zeta|=\tau$
		inside that disk.  For a vertical coordinate map $\phi$ and $0\leq k\leq k_a$,
		\[
		\partial_z^k(\phi\circ F_j)(0)
		=\frac{k!}{2\pi i}\int_{|\zeta|=\tau}
		\frac{\phi(F_j(\zeta))}{\zeta^{k+1}}\,d\zeta
		\longrightarrow \partial_z^k(\phi\circ F_\infty)(0).
		\]
		Thus the derivatives of $F_\infty$ agree with those of $s_f$ through the prescribed order.  This equality is independent of the
		chosen coordinates by the holomorphic chain rule, so
		$j_a^{k_a}F_\infty=j_a^{k_a}s_f$.  Put
		$F_Z=\operatorname{pr}_Z\circ F_\infty:R\to Z$.  Then
		\[
		\pi\circ F_Z=g,\qquad
		\sup_Kd_Z(F_Z,f)\leq\sup_Kd_X(F_\infty,s_f)<\epsilon,\qquad
		j_a^{k_a}F_Z=j_a^{k_a}f.
		\]
		The map $F_Z$ is the required holomorphic lifting.
	\end{proof}
	
	\begin{proof}[Proof of Theorem~\ref{thm:rel-analytic-criterion}]
		The map $\pi$ is a proper surjective holomorphic submersion. By
		Ehresmann's theorem \cite{Ehr51}, $\pi$ is a Serre fibration.  Given a lifting
		problem from an open Riemann surface,
		Proposition~\ref{prop:rel-open-lifting} supplies the holomorphic lifting with
		the required approximation and closed discrete jet interpolation.
		
		Every fiber is connected and simply connected by hypothesis.  The pullback $p:X=R\times_Y Z\to R$ is a locally
		trivial bundle with the same fibers.  Lemma~\ref{lem:rel-section-homotopy}
		gives a section homotopy $H:R\times[0,1]\to X$ between the
		sections associated with $f$ and $F$.  Set
		$f_t(z)=\operatorname{pr}_Z H(z,t)$.  Then
		\[
		f_0=f,\qquad f_1=F,\qquad \pi\circ f_t=g\quad(0\leq t\leq1).
		\]
		These identities prove the homotopy conclusion of the strengthened assertion.  Specializing to Runge $K$
		and finite $A\subset K$ gives all conditions in
		Definition~\ref{def:rel-oka1-map}, exactly the Oka-1 map condition
		of~\cite[Definition~7.7]{AF25}.
	\end{proof}
	\section{Algebraic preparation}
	\label{sec:relative-algebraic-oka1}
	
	The algebraic proof requires a fixed bundle with vanishing first
	cohomology and a smooth identification with the bundle carrying the section
	equation.  We first show how to make such an identification holomorphic near
	the prescribed data.  We then construct the required algebraic section by
	elementary transforms and smoothing of combs.
	
	\subsection{Bundle isomorphisms near prescribed sets}
	\label{subsec:protected-gauge}
	
	Lemma~\ref{lem:alg-natural-Ds} identifies the natural holomorphic structure
	along a holomorphic section.  We first construct a smooth bundle
	identification and then make it holomorphic near the prescribed sets.
	
	\begin{lemma}
		\label{lem:alg-homotopy-transport}
		Let $p:X\to\Sigma$ be a holomorphic map and let
		\[
		H:\Sigma\times[0,1]\longrightarrow X
		\]
		be a smooth homotopy through sections whose image lies in the submersion
		locus of $p$.  Here $H_t(z)=H(z,t)$ and $p\circ H_t=\operatorname{id}_\Sigma$.
		Then the complex vector bundles
		\[
		H_0^*T_{X/\Sigma}\quad\text{and}\quad H_1^*T_{X/\Sigma}
		\]
		are smoothly isomorphic.  If $H$ is independent of the homotopy parameter
		on a neighborhood of a set $Q\subset\Sigma$, the isomorphism may be chosen
		to be the natural identity on that neighborhood of $Q$.
	\end{lemma}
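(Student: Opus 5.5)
The plan is to realize both pullbacks as restrictions of a single bundle on $\Sigma\times[0,1]$ and then use parallel transport in the $t$-direction. Put $\mathcal V=H^*T_{X/\Sigma}$. Since $H$ is smooth with image in the submersion locus, $T_{X/\Sigma}=\ker dp$ is a smooth complex vector bundle over that open set. Hence $\mathcal V$ is a smooth complex vector bundle over the manifold with boundary $\Sigma\times[0,1]$, and $\mathcal V|_{\Sigma\times\{j\}}=H_j^*T_{X/\Sigma}$ for $j=0,1$. Choose a smooth complex-linear connection $\nabla$ on $\mathcal V$, for instance by gluing trivial connections in local frames with a partition of unity. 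Parallel transport along the segments $t\mapsto(z,t)$ then gives a smooth complex-linear isomorphism $P:\mathcal V|_{\Sigma\times\{0\}}\to\mathcal V|_{\Sigma\times\{1\}}$. Smoothness of $P$ in $z$ follows from smooth dependence of solutions of the linear ODE $\nabla_{\partial_t}\xi=0$ on initial data and parameters, with $\Sigma$ compact or at least handled locally. Equivalently, one could use the homotopy invariance of pullbacks of vector bundles, but the connection gives the relative statement directly.

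For the relative part, let $U$ be a neighborhood of $Q$ on which $H(z,t)=H_0(z)$ for all $t$. Shrink it to an open $U'$ with $Q\subset U'\Subset U$, or simply take $U'$ to be an open neighborhood of $Q$ whose closure lies in $U$. Over $U\times[0,1]$, the bundle $\mathcal V$ is canonically $\operatorname{pr}_\Sigma^*(H_0^*T_{X/\Sigma}|_U)$, because the fiber at $(z,t)$ is literally $T_{X/\Sigma,H_0(z)}$. On this region I take the product connection, that is, $\nabla_{\partial_t}$ is the ordinary $t$-derivative in this identification. Globally I then define $\nabla=\lambda\nabla^{\mathrm{prod}}+(1-\lambda)\nabla'$. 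Here $\nabla'$ is any connection, and $\lambda$ is a cutoff depending only on $z$ that equals $1$ near $\overline{U'}$ and is supported in $U$. A convex combination of connections is a connection, so this is legitimate. Along segments over $U'$, parallel transport is then the identity of $T_{X/\Sigma,H_0(z)}=T_{X/\Sigma,H_1(z)}$, which is the natural identity claimed in the lemma.

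I do not expect a real obstacle. The only care needed is with $\Sigma$ noncompact or with boundary: parallel transport is defined over a compact interval for each $z$, so no completeness issue arises. I also need to check that the natural identification over $U$ is really a smooth bundle isomorphism of $\mathcal V|_{U\times[0,1]}$ with a product, which is immediate because $H$ factors through $\operatorname{pr}_\Sigma$ there. Complex linearity of $P$ holds because $\nabla$ is complex linear, since each ingredient connection is.
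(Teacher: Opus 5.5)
Your proof is correct in substance and uses the same mechanism as the paper, parallel transport in the $t$-direction. The difference is where the connection lives, and that matters for the relative clause.

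The paper chooses a connection $\nabla^X$ on $T_{X/\Sigma}$ over a neighborhood of the image of $H$ in $X$, and transports along the curves $t\mapsto H(z,t)$. In your notation this is the pullback connection $H^*\nabla^X$ on $\mathcal V$. Along $\{z\}\times[0,1]$ its $t$-component is covariant differentiation along the curve $H(z,\cdot)$. This reduces to the ordinary $t$-derivative whenever $\partial_tH(z,\cdot)=0$. So for any choice of $\nabla^X$, the transport is the identity at every $z$ where $H(z,\cdot)$ is constant. No cutoff is needed, and the identity holds on the whole neighborhood where $H$ is $t$-independent.

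You instead start from an arbitrary connection on $\mathcal V$ and correct it with the cutoff $\lambda$. This is legitimate, but it has two costs:
\begin{enumerate}
\item It gives the identity only on the smaller set $U'$ with $\overline{U'}\subset U$, not on ``that neighborhood'' $U$ as the statement asserts.
\item An open $U'\supset Q$ with $\overline{U'}\subset U$ exists only under an extra hypothesis such as $\overline Q\subset U$, for instance $Q$ closed. It fails for an arbitrary set $Q$; for example, take $Q=U$ a proper nonempty open subset of a connected $\Sigma$.
\end{enumerate}

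In the paper's application, $Q=B\setminus R$ is finite and only the identity \emph{near} $Q$ is used, so your weaker conclusion would suffice there. Replacing your connection on $\mathcal V$ by $H^*\nabla^X$ proves the lemma exactly as stated, with less work.
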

	
	\begin{proof}
		The vertical tangent bundle is a smooth complex vector bundle on a
		neighborhood of the image of $H$.  Choose a Hermitian connection on $T_{X/\Sigma}$ over that neighborhood.
		Parallel transport along $t\mapsto H(z,t)$ gives
		\[
		J_z:T_{X/\Sigma,H(z,0)}\longrightarrow T_{X/\Sigma,H(z,1)}.
		\]
		In local frames, transport solves a linear ordinary differential equation
		on $[0,1]$; the coefficients depend smoothly on $z$.  Thus $J_z$ is
		complex linear, invertible, and smooth in $z$.  If $H(z,t)$ is constant
		in $t$, the transport equation gives $J_z=\operatorname{id}$.
	\end{proof}
	
	Parallel transport gives a smooth bundle isomorphism, but the
	approximation argument requires holomorphy near the prescribed data.
	The next lemma achieves that local holomorphy while retaining the
	original isomorphism near a disjoint compact set.
	
	\begin{lemma}
		\label{lem:alg-protected-gauge}
		Let $V$ and $E$ be holomorphic vector bundles of the same rank over a compact
		Riemann surface $\Sigma$, and let $J_0:V\to E$ be a smooth complex
		vector-bundle isomorphism.  Let $P,Q\subset\Sigma$ be compact sets with
		disjoint neighborhoods.  Assume that $P$ is contained in a relatively
		compact open Riemann surface $W\subset\Sigma\setminus Q$ and that $J_0$
		is holomorphic on a neighborhood of $Q$.  Then there is a smooth bundle isomorphism
		$J:V\to E$ which is holomorphic on a neighborhood of $P\cup Q$ and agrees
		with $J_0$ outside a relatively compact open neighborhood of $P$ disjoint
		from $Q$.
	\end{lemma}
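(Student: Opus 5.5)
The plan is to correct $J_0$ near $P$ by a holomorphic gauge built on the open Riemann surface $W$, and then splice it back into $J_0$ with a cutoff, using the fact that invertible matrices close to a given invertible matrix stay invertible.

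First, choose a relatively compact open set $W'$ with
\[
P\subset W'\Subset W\subset\Sigma\setminus Q .
\]
Shrinking if needed, arrange that $\overline W$ avoids a neighborhood of $Q$. By the Behnke--Stein theorem and Grauert's Oka principle, as used in Step~1 of the proof of Theorem~\ref{thm:rel-bordered}, the holomorphic bundles $V|_W$ and $E|_W$ are holomorphically trivial. The graph retraction shows they are topologically trivial, and Grauert's theorem upgrades this to a holomorphic trivialization. Fix holomorphic frames of $V$ and $E$ on $W$. In these frames $J_0$ becomes a smooth map
\[
M_0:W\to\operatorname{GL}_n(\C).
\]

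The main step is to approximate $M_0$ on a neighborhood of a compact set containing $P$ by a holomorphic map $M:W\to\operatorname{GL}_n(\C)$ that is homotopic to $M_0$ there. Since $\operatorname{GL}_n(\C)$ is a complex Lie group, it is an Oka manifold. The Oka principle with approximation on the Stein surface $W$ would therefore give a holomorphic $M$ homotopic to $M_0$. It does not give uniform approximation near $P$ unless the approximation set is $\mathcal O(W)$-convex and $M_0$ is already holomorphic there, which is not assumed.

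I therefore expect the cutoff gluing to be the main obstacle: a holomorphic $M$ and the smooth $M_0$ cannot simply be interpolated by $\chi M+(1-\chi)M_0$, because invertibility may fail. To handle this, work only with homotopy classes. Choose a compact bordered Riemann surface $L$ with
\[
P\subset\operatorname{int}L,\qquad L\subset W'.
\]
The restriction $M_0|_L$ is homotopic through smooth maps into $\operatorname{GL}_n(\C)$ to a holomorphic map $M$ on a neighborhood of $L$; this is the Oka principle on a neighborhood of $L$. Let $M_t$, $t\in[0,1]$, be such a homotopy with $M_1=M$. Choose a smooth function $\tau:\Sigma\to[0,1]$ equal to $1$ near $L$ and $0$ outside $W'$, and define
\[
\widetilde M(z)=M_{\tau(z)}(z).
\]
This requires $M_t$ to be defined on a neighborhood of $\overline{W'}$. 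So first take the homotopy on a bordered neighborhood $L'$ of $\overline{W'}$ inside $W$, with its holomorphic endpoint defined near $L'$. The map $\widetilde M$ is then smooth and invertible-valued, equals $M$ near $L$ and hence is holomorphic near $P$, and equals $M_0$ outside $W'$.

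Finally, define $J$ as $\widetilde M$ in the fixed frames over $W$ and as $J_0$ elsewhere. The two definitions agree near $\Sigma\setminus W'$, so $J$ is a smooth bundle isomorphism. It is holomorphic near $P$, and it equals $J_0$ near $Q$, where $J_0$ is holomorphic by hypothesis. It agrees with $J_0$ outside the relatively compact open neighborhood $W'$ of $P$, and $W'$ is disjoint from $Q$.

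The delicate point is smoothness of the homotopy $M_t$ in both variables. If Grauert's theorem supplies only a continuous homotopy, apply the relative smoothing statement of Subsection~\ref{subsec:bundle-tools} to the trivial bundle over $L'\times[0,1]$ with values in $\operatorname{GL}_n(\C)$, keeping the endpoints fixed near $t=0,1$. Reparametrizing $t$ so that $M_t$ is constant in $t$ near $t=0$ and $t=1$ then makes $\widetilde M$ smooth.
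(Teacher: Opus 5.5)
Your proposal is correct and follows essentially the same route as the paper: trivialize $V|_W$ and $E|_W$ holomorphically by Grauert's Oka principle, homotope the matrix of $J_0$ to a holomorphic map using that $\operatorname{GL}_n(\C)$ is Oka, smooth the homotopy relative to its endpoints, and splice with $z\mapsto M_{\tau(z)}(z)$ for a cutoff $\tau$ supported in a relatively compact neighborhood of $P$. The only nuance is the order of two steps: as the paper does, first reparametrize the homotopy to be constant near $t=0,1$ and only then apply relative smoothing, because that statement needs the map to be smooth near the endpoint slices held fixed; your initial approximation detour is not needed.
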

	
	\begin{proof}
		A smooth isomorphism on the compact base need not be holomorphic.  We
		construct a holomorphic isomorphism on the open subsurface $W$ and use
		a cutoff to retain $J_0$ near $Q$.  After shrinking $W$,
		choose
		\[
		P\subset W_0\Subset W\Subset\Sigma\setminus Q.
		\]
		Every component of $W$ is an open Riemann surface and retracts onto a graph.
		Hence every complex vector bundle on $W$ is topologically trivial, and Grauert's Oka principle \cite[Satz~I]{Grauert58} makes $V|_W$ and $E|_W$ holomorphically trivial.
		Put $n=\operatorname{rank}V=\operatorname{rank}E$.
		In fixed holomorphic trivializations, $J_0|_W$ is a smooth map
		\[
		g_0:W\longrightarrow\operatorname{GL}_n(\C).
		\]
		The complex Lie group $\operatorname{GL}_n(\C)$ is a homogeneous complex manifold and hence an Oka manifold.  The
		Oka principle for maps from the Stein manifold $W$
		\cite[Theorem~1.1]{Forstneric09} therefore gives a homotopy $g_t:W\to\operatorname{GL}_n(\C)$ from $g_0$ to a holomorphic map
		$g_1$.  Reparametrize the homotopy to be constant near both endpoints.
		Relative smooth approximation (Subsection~\ref{subsec:bundle-tools}), relative to
		both endpoints, applied to the trivial bundle with fiber $\operatorname{GL}_n(\C)$
		over a compact subsurface times $[0,1]$, makes $(z,t)\mapsto g_t(z)$
		smooth on the region used for the cutoff construction.  The relative
		condition preserves both $g_0$ and $g_1$.
		
		Choose $\chi\in C^\infty(\Sigma,[0,1])$ supported in $W$ and equal to one
		near $\overline W_0$.  On $W$ represent $J$ by
		\[
		z\longmapsto g_{\chi(z)}(z),
		\]
		and put $J=J_0$ outside $W$.  Near $\partial W$ the two definitions agree
		because $\chi=0$ and therefore $g_{\chi(z)}(z)=g_0(z)$.  The resulting $J$ is smooth and invertible, is represented by the holomorphic matrix $g_1$ near $P$, and equals $J_0$ near $Q$.
	\end{proof}
	
	\subsection{Positive elementary transforms}
	\label{subsec:algebraic-sections}
	
	Positive elementary transforms will remove the first-cohomology obstruction.
	Given a vector bundle $E$ of rank $r$ on a smooth projective curve $B$,
	distinct points $b_i\in B$, and lines $\xi_i\subset E_{b_i}$,
	its \emph{positive elementary transform} is the vector bundle $E'$ fitting into
	\[
	0\longrightarrow E\longrightarrow E'
	\longrightarrow\bigoplus_i\C_{b_i}\longrightarrow0
	\]
	with the indicated local pole directions.  Explicitly, if $x$ is a local
	parameter at $b_i$ and a frame $e_1,\ldots,e_r$ satisfies
	$\xi_i=\C e_1(b_i)$, the transformed sheaf is locally generated by
	$x^{-1}e_1,e_2,\ldots,e_r$.  Here $\C_{b_i}$ denotes a
	skyscraper sheaf of length one; the notation does not specify a canonical
	trivialization of its one-dimensional fiber.
	
	We now make the parameter space of elementary transforms explicit.  We use
	the convention that $\mathbb P_B(E)$ parametrizes one-dimensional
	subspaces of the fibers of $E$.  For $m\geq1$, put
	\begin{equation}\label{eq:alg-Hecke-space}
		\mathscr H_m(E)=
		\left\{
		((b_1,\xi_1),\ldots,(b_m,\xi_m))\in\mathbb P_B(E)^m:
		b_i\ne b_j\text{ for }i\ne j
		\right\}.
	\end{equation}
	$\mathscr H_m(E)$ is the ordered parameter space of simple positive elementary transforms.  If
	$O\subset B$ is Euclidean open, let $\mathscr H_m(E;O)$ denote the subset
	on which every $b_i$ belongs to $O$.
	
	\begin{lemma}
		\label{lem:alg-universal-transform}
		Let $B$ be a smooth projective curve, let $E$ be a vector bundle of
		positive rank on $B$, and fix an integer $m\geq1$.  The variety $\mathscr H_m(E)$ is smooth and
		irreducible, and there is a vector bundle
		\[
		\mathscr E_m^+\longrightarrow B\times\mathscr H_m(E)
		\]
		whose restriction over
		$\eta=((b_1,\xi_1),\ldots,(b_m,\xi_m))$ is the positive elementary
		transform $E_\eta^+$ of $E$ in the indicated directions.  Consequently,
		for every line bundle $M$ on $B$, the locus
		\begin{equation}\label{eq:alg-transform-vanishing-locus}
			\left\{\eta\in\mathscr H_m(E):
			H^1(B,E_\eta^+\otimes M)=0\right\}
		\end{equation}
		is Zariski open.
	\end{lemma}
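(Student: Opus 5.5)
The proof has three parts. First I show that $\mathscr H_m(E)$ is smooth and irreducible. Then I build the universal transform $\mathscr E_m^+$ as a sheaf on $B\times\mathscr H_m(E)$ and check that it is locally free with the right fibers. Finally the openness statement follows from semicontinuity.

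\emph{Smoothness and irreducibility.} The projective bundle $\mathbb P_B(E)$ is smooth, irreducible and projective of dimension $\operatorname{rank}E$. Its $m$-fold fiber product over $\operatorname{Spec}\C$ is $\mathbb P_B(E)^m$, which is again smooth and irreducible. The space $\mathscr H_m(E)$ is the complement of the preimages of the big diagonals $\{b_i=b_j\}$ under the map $\mathbb P_B(E)^m\to B^m$. These preimages are closed, and $\mathscr H_m(E)$ is nonempty because $B$ is infinite. So $\mathscr H_m(E)$ is a nonempty Zariski-open subset of a smooth irreducible variety, which gives both properties.

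\emph{The universal transform.} For $i=1,\dots,m$, let $\Gamma_i\subset B\times\mathscr H_m(E)$ be the graph of the map $\eta\mapsto b_i$. Each $\Gamma_i$ is a smooth Cartier divisor isomorphic to $\mathscr H_m(E)$, and the $\Gamma_i$ are pairwise disjoint by the distinctness condition. On $\mathbb P_B(E)$ we have the tautological line subbundle $\mathcal O(-1)\subset\rho^*E$. Pulling back along the $i$th projection and restricting to $\Gamma_i$ gives a line subbundle $L_i\subset(\operatorname{pr}_B^*E)|_{\Gamma_i}$ whose fiber at $\eta$ is $\xi_i$. Set $F=\operatorname{pr}_B^*E$ and define
\[
\mathscr E_m^+=\ker\Bigl(F\Bigl(\textstyle\sum_i\Gamma_i\Bigr)\longrightarrow\bigoplus_i\bigl(F/L_i\bigr)\otimes\mathcal O(\Gamma_i)|_{\Gamma_i}\Bigr).
\]
Since the $\Gamma_i$ are disjoint, it suffices to check local freeness near a single $\Gamma_i$. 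Locally in $\mathscr H_m(E)$, choose a frame $e_1,\dots,e_r$ of $F$ with $e_1|_{\Gamma_i}$ spanning $L_i$; such a frame exists because $L_i$ is a subbundle, so its generator extends locally to the first vector of a frame. Let $x$ be a local equation of $\Gamma_i$. Then $\mathscr E_m^+$ is freely generated by $x^{-1}e_1,e_2,\dots,e_r$, so it is locally free of rank $r$.

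Restricting to the slice $B\times\{\eta\}$, the function $x$ restricts to a local parameter at $b_i$. The restricted frame $x^{-1}e_1,e_2,\dots,e_r$ is then exactly the local description of $E^+_\eta$ given in the definition of a positive elementary transform. So $\mathscr E_m^+|_{B\times\{\eta\}}\cong E^+_\eta$.

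\emph{Openness of the vanishing locus.} The sheaf $\mathscr E_m^+\otimes\operatorname{pr}_B^*M$ is locally free, hence flat over $\mathscr H_m(E)$, and the projection $B\times\mathscr H_m(E)\to\mathscr H_m(E)$ is projective. By the semicontinuity theorem, $\eta\mapsto\dim H^1(B,E^+_\eta\otimes M)$ is upper semicontinuous. Therefore the locus \eqref{eq:alg-transform-vanishing-locus}, where this dimension is $0$, is Zariski open.

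The main obstacle is the local freeness computation for $\mathscr E_m^+$, that is, producing the adapted frame uniformly in families. Once the line subbundle $L_i$ and the adapted frame are in place, this step is routine.
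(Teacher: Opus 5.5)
Your proposal is correct and follows essentially the same route as the paper. You define $\mathscr E_m^+$ as the same kernel along the disjoint graphs $\Gamma_i$ with the tautological lines, and you use the same adapted-frame computation showing that it is freely generated by $x^{-1}e_1,e_2,\ldots,e_r$ with fibers $E_\eta^+$. You also obtain smoothness and irreducibility because $\mathscr H_m(E)$ is open in $\mathbb P_B(E)^m$, and openness from cohomology semicontinuity.
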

	
	\begin{proof}
		Put $r=\operatorname{rank}E$ and $S=\mathscr H_m(E)$.  Let
		$\rho:B\times S\to B$ be projection and $\beta_i:S\to B$ the $i$th
		base-point map.  Write
		$\Gamma_i\subset B\times S$ for the graph of $\beta_i$.  The divisors
		$\Gamma_i$ are pairwise disjoint, and we put
		\[
		\mathscr D=\Gamma_1+\cdots+\Gamma_m.
		\]
		On $\Gamma_i\cong S$ there is a tautological line subbundle
		\[
		\mathscr L_i\subset \rho^*E|_{\Gamma_i}.
		\]
		If $\jmath_i:\Gamma_i\hookrightarrow B\times S$ is the inclusion, put
		\begin{equation}\label{eq:alg-universal-positive-transform}
			\mathscr E_m^+=\ker\!\left(
			\rho^*E(\mathscr D)\longrightarrow
			\bigoplus_{i=1}^m(\jmath_i)_*
			\frac{\rho^*E(\mathscr D)|_{\Gamma_i}}
			{\mathscr L_i\otimes\mathcal O(\mathscr D)|_{\Gamma_i}}
			\right).
		\end{equation}
		The arrow in~\eqref{eq:alg-universal-positive-transform} first restricts
		to each $\Gamma_i$ and then takes the indicated quotient.  The kernel requires the polar part at $\Gamma_i$ to lie in the
		chosen tautological line.
		
		We check local freeness, which also identifies the fibers.  Near a point of
		$\Gamma_i$, choose a local equation $x=0$ for $\Gamma_i$ and a frame
		$e_1,\ldots,e_r$ of $\rho^*E$ in which $\mathscr L_i$ is generated by
		$e_1$ along $\Gamma_i$.  Since the other graphs are disjoint from this
		neighborhood, the kernel in \eqref{eq:alg-universal-positive-transform} is
		freely generated by
		\[
		x^{-1}e_1,e_2,\ldots,e_r.
		\]
		Away from the graphs, $\mathscr E_m^+$ equals $\rho^*E$.  Thus $\mathscr E_m^+$ is a
		vector bundle.  The displayed local basis remains a basis after arbitrary
		base change on $S$; hence formation of the kernel commutes with restriction
		to every parameter fiber.  The restriction of $\mathscr E_m^+$ to $B\times\{\eta\}$ is therefore precisely the
		sheaf of rational sections of $E$ with at most a simple pole at $b_i$, whose
		polar direction belongs to $\xi_i$.
		
		The projective bundle $\mathbb P_B(E)$ is smooth and irreducible.  The nonempty open subset $\mathscr H_m(E)\subset\mathbb P_B(E)^m$
		is therefore smooth and irreducible.  Finally,
		$\mathscr E_m^+\otimes\rho^*M$ is locally free.  Since $B\times S\to S$ is smooth, the sheaf $\mathscr E_m^+\otimes\rho^*M$ is
		flat over $S$; the projection is also proper.  Cohomology semicontinuity therefore makes the
		function
		\[
		\eta\longmapsto h^1(B,E_\eta^+\otimes M)
		\]
		upper semicontinuous; equivalently, the locus
		\eqref{eq:alg-transform-vanishing-locus} is Zariski open
		\cite[Tag~0BDN]{Stacks}.
	\end{proof}
	
	The cohomology-vanishing construction is due to Graber--Harris--Starr
	\cite[Lemma~2.5]{GHS03}.  We give a universal-family construction and a
	collision argument that separates coincident elementary modifications.
	These make the construction's compatibility with localization in a Euclidean
	open set and with additional open conditions on the directions explicit.
	
	\begin{lemma}
		\label{lem:alg-localized-transform}
		Let $B$ be a smooth projective curve, let $E$ be a vector bundle of positive
		rank on $B$, let $O\subset B$ be a nonempty Euclidean open set, and
		let $N\geq0$ be an integer.  There are an integer $m\geq1$ and a nonempty Zariski-open subset
		$\mathscr U_m\subset\mathscr H_m(E)$ such that every
		$\eta\in\mathscr U_m$ satisfies
		\begin{equation}\label{eq:alg-uniform-transform-vanishing}
			H^1\bigl(B,E_\eta^+(-W)\bigr)=0
		\end{equation}
		for every effective divisor $W$ of degree $N$.  Moreover, if
		$\mathscr G\subset\mathscr H_m(E)$ is any Zariski-open dense subset, then
		\[
		\mathscr U_m\cap\mathscr G\cap\mathscr H_m(E;O)\ne\varnothing.
		\]
		In particular, the transform may be chosen with distinct support points in
		$O$ while satisfying any additional Zariski-open dense conditions on the
		ordered points and pole directions.
	\end{lemma}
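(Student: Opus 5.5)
The proof has two halves: an openness-and-existence argument giving $\mathscr U_m$, and a density argument giving the localization in $O$. For the first half I would pass to the universal family over $\operatorname{Sym}^NB\times\mathscr H_m(E)$. Let $\mathscr W\subset B\times\operatorname{Sym}^NB$ be the universal divisor, and consider the bundle $\mathscr E_m^+(-\mathscr W)$ on $B\times\operatorname{Sym}^NB\times\mathscr H_m(E)$, which is locally free by Lemma~\ref{lem:alg-universal-transform}. Exactly as in that lemma, this bundle is flat over $\operatorname{Sym}^NB\times\mathscr H_m(E)$, and its formation commutes with base change.

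Upper semicontinuity then makes the locus of pairs $(W,\eta)$ with $H^1(B,E_\eta^+(-W))\neq0$ Zariski closed. Since $\operatorname{Sym}^NB$ is projective, its image in $\mathscr H_m(E)$ is closed. I would define $\mathscr U_m$ to be the complement of this image. The set $\mathscr U_m$ is then Zariski open and consists exactly of those $\eta$ satisfying \eqref{eq:alg-uniform-transform-vanishing} for every effective $W$ of degree $N$. What remains is to show $\mathscr U_m\neq\varnothing$ for some $m$.

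Nonemptiness is the main step, and I would follow Graber--Harris--Starr. By Serre duality, $H^1(B,F(-W))^*\cong H^0(B,K_B\otimes F^*(W))$. I would check how this space changes when $F$ is replaced by its positive elementary transform $F'$ at a point $b$ in a direction $\xi$. A section $\phi$ of $K_B\otimes F^*(W)$ survives in $K_B\otimes F'^*(W)$ exactly when $\phi(b)$ annihilates $\xi$. So if $\phi\neq0$, choosing $b$ off the zeros of $\phi$ and $\xi$ with $\phi(b)(\xi)\neq0$ strictly lowers $h^1(F(-W))$. Consequently a general single transform lowers $h^1$ whenever it is positive, for a fixed $W$, and by semicontinuity also for all $W$ in a dense open subset of any irreducible closed $T\subset\operatorname{Sym}^NB$.

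To obtain uniformity in $W$ I would run a noetherian induction. Consider the finitely many irreducible strata of $\operatorname{Sym}^NB$ on which $h^1$ is constant and positive. After adding one general transform, the bad locus in each stratum either has strictly smaller generic $h^1$ or becomes a proper closed subset. Both the dimension of a stratum and the value of $h^1$ are bounded, so after finitely many transforms the bad locus is empty. This yields one $\eta$ with distinct points, and hence $\mathscr U_m\neq\varnothing$. The two points I expect to need care are bookkeeping that successive transforms at distinct general points compose to a single point of $\mathscr H_m(E)$, and checking that the induction terminates; I would handle termination via the lexicographic order on (maximal generic $h^1$, dimension of the bad locus).

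For the localization statement, $\mathscr H_m(E)$ is irreducible by Lemma~\ref{lem:alg-universal-transform}. Hence $\mathscr U_m\cap\mathscr G$ is a nonempty Zariski-open subset of an irreducible variety, and is therefore dense in the Euclidean topology. The set $\mathscr H_m(E;O)$ is a nonempty Euclidean-open subset, nonempty because $O$ is infinite and distinct points can be chosen in it. The two sets therefore meet, which gives the required $\eta$ with all support points in $O$.
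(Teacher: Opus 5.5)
Your argument is correct, but you handle the two substantive points (uniformity in $W$ and nonemptiness of $\mathscr U_m$) differently from the paper.

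\textbf{Uniformity in $W$.} The paper never lets $W$ vary. It fixes one effective divisor $R_0$ of degree $g+N$ and defines $\mathscr U_m$ by the single condition $H^1(B,E_\eta^+(-R_0))=0$. Since $\deg(R_0-W)=g$, Riemann--Roch gives $R_0-W\sim A_W$ with $A_W$ effective, so $H^1(B,E_\eta^+(-R_0))$ surjects onto $H^1(B,E_\eta^+(-W))$. Openness is then plain semicontinuity on $\mathscr H_m(E)$. No universal divisor over $\operatorname{Sym}^NB$ and no properness argument are needed.

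\textbf{Nonemptiness.} Instead of the Graber--Harris--Starr duality step, the paper uses a collision family over $U^{r\mu}$. When all $r\mu$ support points coincide at $t\in O$, with $\mu$ of the simple poles in each frame direction $e_k$, the fiber is $F(\mu t)$ with $F=E(-R_0)$. This has vanishing $H^1$ by Serre vanishing, and semicontinuity then yields nearby configurations with distinct points.

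\textbf{Comparison.} Your route produces the largest possible $\mathscr U_m$, namely exactly the uniform-vanishing locus, and it stays close to GHS. The cost is a stratification and a noetherian induction. Combining your duality step with the paper's $R_0$ reduction removes the induction entirely. Each general transform lowers $h^1(B,\,\cdot\,(-R_0))$ by one while it is positive, so $m=\max\{1,h^1(B,E(-R_0))\}$ transforms suffice.

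\textbf{Termination invariant.} One detail in your termination argument needs fixing. The pair (maximum of $h^1$, dimension of the whole locus $\{h^1>0\}$) need not strictly drop. A general transform can lower the generic value on a component from $2$ to $1$ while keeping the value $2$ on a proper closed subset, leaving both entries unchanged. Use instead the pair $(M,\dim\{W:h^1=M\})$, where $M$ is the maximum of $h^1$ over $\operatorname{Sym}^NB$. A transform general with respect to the finitely many components of the closed set $\{h^1=M\}$ makes $h^1<M$ on a dense open subset of each component. Since $h^1$ never increases under positive transforms, this pair decreases lexicographically.

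Finally, if $E$ already satisfies the vanishing, one extra general transform gives $m\geq1$.
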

	
	\begin{proof}
		We first obtain vanishing of the first cohomology after one fixed twist $-R_0$.
		Riemann--Roch will then give the assertion simultaneously for all $W$
		of degree $N$, since $\deg(R_0-W)$ will equal the genus of $B$.
		Write $g$ for the genus of $B$ and put $r=\operatorname{rank}E$.  Fix an effective divisor
		$R_0$ of degree $g+N$ and put
		\[
		F=E(-R_0).
		\]
		Tensoring by a line bundle canonically identifies the projective bundles of
		lines $\mathbb P_B(F)$ and $\mathbb P_B(E)$, and positive elementary
		transforms commute with tensoring: the local generators
		$x^{-1}e_1,e_2,\ldots,e_r$ are all tensored by the same local line-bundle
		generator.  Thus, for corresponding
		configurations $\eta$,
		\begin{equation}\label{eq:alg-transform-twist-compatibility}
			F_\eta^+\cong E_\eta^+(-R_0).
		\end{equation}
		
		We first exhibit a configuration supported in $O$ for which the right side
		of \eqref{eq:alg-transform-twist-compatibility} has vanishing first
		cohomology.  Choose $t\in O$, shrink a Zariski-open neighborhood
		$U\subset B$ of $t$ so that $F|_U$ has an algebraic frame
		$e_1,\ldots,e_r$, and choose a small Euclidean disk
		$\Delta\Subset U\cap O$ centered at $t$.  Since
		$\mathcal O_B(t)$ is ample, Serre vanishing gives an integer $\mu\geq1$
		such that
		\begin{equation}\label{eq:alg-collision-special-fiber}
			H^1\bigl(B,F(\mu t)\bigr)=0.
		\end{equation}
		Set $m=r\mu$.
		
		To allow the support points to coincide, use the parameter space
		\[
		Z=U^{r\mu},
		\qquad z=(z_{k,j})_{1\leq k\leq r,\ 1\leq j\leq\mu}.
		\]
		For each pair $(k,j)$ let $\Gamma_{k,j}\subset B\times Z$ be the graph of
		the coordinate map $z_{k,j}:Z\to U\hookrightarrow B$, and put
		\[
		\mathscr D_k=\sum_{j=1}^{\mu}\Gamma_{k,j}.
		\]
		Each graph is an effective Cartier divisor relative to $Z$.
		Locally on the smooth relative curve $B\times Z\to Z$, the equation
		for $\mathscr D_k$ is the product of the equations for $\Gamma_{k,j}$.
		After arbitrary base change on $Z$, each factor remains a non-zero-divisor,
		and hence so does their product, even when coordinates coincide;
		repeated factors record their multiplicities.
		All the $\mathscr D_k$ are supported in $U\times Z$.
		
		We construct a vector bundle $\mathscr F^+$ on $B\times Z$.  On
		$U\times Z$ set
		\begin{equation}\label{eq:alg-collision-family}
			\mathscr F^+|_{U\times Z}
			=\bigoplus_{k=1}^r\mathcal O_{U\times Z}(\mathscr D_k)e_k.
		\end{equation}
		On
		\[
		(B\times Z)\setminus\bigcup_{k,j}\Gamma_{k,j}
		\]
		use the pullback of $F$.  These two open sets cover $B\times Z$, since all
		of the graphs lie over $U$.  On $(U\times Z)\setminus\bigcup_{k,j}\Gamma_{k,j}$ the canonical section of
		$\mathcal O(\mathscr D_k)$ is invertible, and hence canonically identifies
		the $k$th summand of~\eqref{eq:alg-collision-family} with the line generated by $e_k$ in the pullback of $F$.
		The identifications glue \eqref{eq:alg-collision-family} to a vector bundle
		on $B\times Z$.  Since $B\times Z\to Z$ is smooth, the locally free sheaf
		$\mathscr F^+$ is flat over $Z$.
		
		Let $z_0\in Z$ be the point at which every coordinate
		$z_{k,j}$ equals $t$.  Then $\mathscr D_k|_{B\times\{z_0\}}=\mu t$ for
		every $k$.  On $U$, the restriction of the special fiber is
		\[
		\bigl(\mathscr F^+|_{B\times\{z_0\}}\bigr)|_U
		=\bigoplus_{k=1}^r\mathcal O_U(\mu t)e_k.
		\]
		The gluing with $F$ off $t$ is the canonical one used in~\eqref{eq:alg-collision-family}, so globally
		\[
		\mathscr F^+|_{B\times\{z_0\}}\cong F(\mu t).
		\]
		Write $\mathscr F_z^+=\mathscr F^+|_{B\times\{z\}}$.
		By \eqref{eq:alg-collision-special-fiber} and cohomology semicontinuity
		for the proper projection $B\times Z\to Z$,
		there is a Zariski-open neighborhood $Z^0$ of $z_0$ such that
		\begin{equation}\label{eq:alg-collision-nearby-vanishing}
			H^1(B,\mathscr F_z^+)=0
			\qquad(z\in Z^0).
		\end{equation}
		
		Let $Z^*\subset Z$ be the complement of all diagonals
		$z_{k,j}=z_{k',j'}$ for $(k,j)\ne(k',j')$.  $Z^*$ is a nonempty Zariski-open dense subset of the
		irreducible variety $Z=U^{r\mu}$.  The polydisk
		\[
		Z(\Delta)=\Delta^{r\mu}\subset Z^{\mathrm{an}}
		\]
		is a Euclidean-open neighborhood of $z_0$.  Hence
		\[
		Z^0\cap Z^*\cap Z(\Delta)\ne\varnothing.
		\]
		Indeed, $Z^0\cap Z(\Delta)$ is a nonempty Euclidean-open set containing $z_0$, whereas the finite union of diagonals
		$Z\setminus Z^*$ is a proper algebraic subset and has empty
		Euclidean interior.
		
		Choose $z\in Z^0\cap Z^*\cap Z(\Delta)$
		and order the $m=r\mu$ coordinates of $z$
		lexicographically by $(k,j)$.  The points $z_{k,j}$ are distinct and belong to $O$.  At the
		point $z_{k,j}$, formula \eqref{eq:alg-collision-family} permits a simple
		pole precisely in the line spanned by $e_k(z_{k,j})$.  Thus the ordered
		configuration
		\[
		\eta(z)=\bigl((z_{k,j},\C e_k(z_{k,j}))\bigr)_{k,j}
		\in\mathscr H_m(F;O)\cong\mathscr H_m(E;O)
		\]
		has
		\[
		\mathscr F_z^+\cong F_{\eta(z)}^+.
		\]
		Equations \eqref{eq:alg-transform-twist-compatibility} and
		\eqref{eq:alg-collision-nearby-vanishing} give
		\[
		H^1\bigl(B,E_{\eta(z)}^+(-R_0)\bigr)=0.
		\]
		By Lemma~\ref{lem:alg-universal-transform}, the locus
		\begin{equation}\label{eq:alg-U-m-definition}
			\mathscr U_m=
			\left\{\eta\in\mathscr H_m(E):
			H^1\bigl(B,E_\eta^+(-R_0)\bigr)=0\right\}
		\end{equation}
		is therefore a nonempty Zariski-open subset.
		
		We next pass from the single divisor $R_0$ to all effective divisors of
		degree $N$, keeping the same configuration $\eta\in\mathscr U_m$.
		Let $W$ be any such divisor.  The line bundle
		$\mathcal O_B(R_0-W)$ has degree $g$.  Writing $K_B$ for the canonical
		line bundle of $B$, Riemann--Roch gives
		\[
		h^0\bigl(B,\mathcal O_B(R_0-W)\bigr)
		-h^0\bigl(B,K_B\otimes\mathcal O_B(-R_0+W)\bigr)=1,
		\]
		so $\mathcal O_B(R_0-W)$ has a nonzero section.  Hence there is an effective divisor $A_W$
		with
		\[
		R_0-W\sim A_W.
		\]
		For every $\eta\in\mathscr U_m$ we have
		\[
		E_\eta^+(-W)\cong E_\eta^+(-R_0)(A_W).
		\]
		The exact sequence
		\[
		0\longrightarrow E_\eta^+(-R_0)
		\longrightarrow E_\eta^+(-R_0)(A_W)
		\longrightarrow\mathcal Q_{\eta,W}\longrightarrow0
		\]
		has torsion quotient $\mathcal Q_{\eta,W}$, so
		$H^1(B,\mathcal Q_{\eta,W})=0$.  The cohomology sequence gives a surjection
		\[
		H^1\bigl(B,E_\eta^+(-R_0)\bigr)
		\longrightarrow H^1\bigl(B,E_\eta^+(-W)\bigr)
		\longrightarrow0.
		\]
		The first group vanishes by \eqref{eq:alg-U-m-definition}.  This proves
		\eqref{eq:alg-uniform-transform-vanishing} for every $W$, with the same
		fixed configuration $\eta\in\mathscr U_m$.
		
		It remains to impose localization and any further generality conditions.
		The variety $\mathscr H_m(E)$ is irreducible, and
		$\mathscr H_m(E;O)$ is a nonempty Euclidean-open subset: choose $m$
		distinct points of $O$ and arbitrary lines over them.  A proper algebraic
		subset of an irreducible complex variety has empty Euclidean interior, so
		$\mathscr H_m(E;O)$ is Zariski dense.  If
		$\mathscr G\subset\mathscr H_m(E)$ is Zariski open and dense, then
		$\mathscr U_m\cap\mathscr G$ is a nonempty Zariski-open subset of
		$\mathscr H_m(E)$ and therefore meets $\mathscr H_m(E;O)$.  Thus $\mathscr U_m\cap\mathscr G\cap\mathscr H_m(E;O)\ne\varnothing$,
		which proves the final assertion.
	\end{proof}
	
	\subsection{Combs and sections with vanishing cohomology}
	
	Attaching a rational tooth realizes a positive elementary transform on
	the handle.  We also need to control the normal bundle on the tooth:
	a nonzero projection of the modification direction to the trivial summand
	gives the splitting below.
	
	\begin{lemma}
		\label{lem:alg-tooth-transform}
		Let $A$ be a globally generated vector bundle on $\Pone$, let $x\in\Pone$,
		and put $F=A\oplus\mathcal O_{\Pone}$.  If
		$\lambda\subset F_x$ is a line whose projection to
		$\mathcal O_{\Pone,x}$ is nonzero, then the positive elementary transform
		of $F$ at $x$ in the direction $\lambda$ is isomorphic to
		\[
		A\oplus\mathcal O_{\Pone}(x).
		\]
		In particular, if $A$ is ample, this transform is globally generated and
		has vanishing first cohomology.
	\end{lemma}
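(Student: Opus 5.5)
The plan is to write down the transformed sheaf explicitly with a local frame adapted to the splitting, and then split it globally.

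Let $F'$ denote the positive elementary transform. Choose a local parameter $z$ at $x$ and a local frame $a_1,\dots,a_{r}$ of $A$ near $x$, together with the constant section $1$ of $\mathcal O_{\Pone}$. Since the projection of $\lambda$ to $\mathcal O_{\Pone,x}$ is nonzero, we may write $\lambda=\C(\alpha+1)$ with $\alpha\in A_x$. Global generation of $A$ gives a global section $\tilde\alpha\in H^0(\Pone,A)$ with $\tilde\alpha(x)=\alpha$. Then
\[
\phi(a,c)=(a-c\,\tilde\alpha,\,c)
\]
is a global holomorphic automorphism of $F=A\oplus\mathcal O_{\Pone}$. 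It is unipotent, with inverse $(a,c)\mapsto(a+c\tilde\alpha,c)$, and it carries $\lambda$ onto the line $0\oplus\mathcal O_{\Pone,x}$.

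Positive elementary transforms are functorial under bundle automorphisms, since they are defined by the local generators $x^{-1}e_1,e_2,\dots$. Hence $F'$ is isomorphic to the transform of $F$ at $x$ in the direction $0\oplus\C$. Using the frame $1,a_1,\dots,a_r$, that transform is locally generated by $z^{-1}\cdot 1,a_1,\dots,a_r$. This sheaf is exactly $A\oplus\mathcal O_{\Pone}(x)$, with the gluing off $x$ being the canonical one, just as in the proof of Lemma~\ref{lem:alg-localized-transform}.

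For the final claim, suppose $A$ is ample on $\Pone$. By Grothendieck's splitting theorem, $A=\bigoplus\mathcal O(d_i)$ with all $d_i\geq1$, and $\mathcal O_{\Pone}(x)\cong\mathcal O(1)$. Each summand is then globally generated and has $H^1(\Pone,\mathcal O(d))=0$ for $d\geq-1$, so the direct sum has both properties.

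The only real point is the existence of the global section $\tilde\alpha$, which is exactly where global generation of $A$ is used. A merely local correction would not give a global isomorphism. Everything else is routine bookkeeping of local generators.
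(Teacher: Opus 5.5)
Your proposal is correct and follows essentially the same route as the paper. It uses the unipotent automorphism $(a,c)\mapsto(a-c\tilde\alpha,c)$ built from a global section extending $\alpha$ (the only place global generation enters), functoriality of positive elementary transforms, and the identification of the transform in the direction $0\oplus\C$ with $A\oplus\mathcal O_{\Pone}(x)$, followed by the splitting theorem for the ample case.
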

	
	\begin{proof}
		After scaling a generator, write $\lambda=\C(a_x,1)$ with $a_x\in A_x$.
		Global generation gives a section $a\in H^0(\Pone,A)$ with $a(x)=a_x$.
		The bundle automorphism
		\[
		(u,c)\longmapsto(u-ca,c)
		\]
		of $A\oplus\mathcal O_{\Pone}$ has inverse $(u,c)\mapsto(u+ca,c)$
		and sends $(a_x,1)$ to $(0,1)$.  Positive elementary transforms are functorial under
		bundle automorphisms, and the transform in the direction $\C(0,1)$ is
		$A\oplus\mathcal O_{\Pone}(x)$.  If $A$ is ample, the splitting theorem gives
		$A\cong\bigoplus_\nu\mathcal O_{\Pone}(a_\nu)$ with $a_\nu\geq1$.
		Each summand and $\mathcal O_{\Pone}(x)\cong\mathcal O_{\Pone}(1)$
		is globally generated and has zero $H^1$, proving the last assertion.
	\end{proof}
	
	The handle and teeth meet along a tree.  Surjective evaluation at
	each node allows componentwise global generation and cohomology
	vanishing to pass to the whole curve.
	
	\begin{lemma}
		\label{lem:alg-tree-bundle}
		Let $C$ be a connected nodal curve whose irreducible components are smooth
		and whose dual graph is a tree.  Let $E$ be a vector bundle on $C$.  If, for
		every irreducible component $C_\alpha\subset C$, the bundle
		$E|_{C_\alpha}$ is globally generated and
		\[
		H^1(C_\alpha,E|_{C_\alpha})=0,
		\]
		then $E$ is globally generated and $H^1(C,E)=0$.
	\end{lemma}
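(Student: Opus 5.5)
I would argue by induction on the number of irreducible components, peeling off a leaf of the dual tree. If $C$ is irreducible there is nothing to prove. Otherwise choose a component $C_1$ that is a leaf of the dual graph. Let $C'$ be the closure of $C\setminus C_1$; it is again connected nodal with smooth components and tree dual graph, and it meets $C_1$ in exactly one node $y$. Since the dual graph is a tree, this single node realizes $C$ as the pushout of $C_1$ and $C'$ glued at one point. That gives the normalization-type exact sequence
\[
0\longrightarrow E\longrightarrow E|_{C_1}\oplus E|_{C'}\longrightarrow E_y\longrightarrow 0,
\]
where the last map is the difference of the two evaluations at $y$. This sequence is exact because, locally at the node, $\mathcal O_C$ is the fiber product of $\mathcal O_{C_1}$ and $\mathcal O_{C'}$ over $\mathbb C_y$, and $E$ is locally free.

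By induction, $E|_{C'}$ is globally generated with $H^1(C',E|_{C'})=0$, and the hypotheses give the same for $E|_{C_1}$. The long exact sequence reads
\[
H^0(C_1,E|_{C_1})\oplus H^0(C',E|_{C'})\longrightarrow E_y\longrightarrow H^1(C,E)\longrightarrow H^1(C_1,E|_{C_1})\oplus H^1(C',E|_{C'})=0.
\]
Global generation of $E|_{C_1}$ already makes the evaluation $H^0(C_1,E|_{C_1})\to E_y$ surjective. So the first arrow is onto and $H^1(C,E)=0$.

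For global generation, take any point $x\in C$. Say $x\in C_1$; the case $x\in C'$ is symmetric. Let $v\in E_x$, and choose $s_1\in H^0(C_1,E|_{C_1})$ with $s_1(x)=v$. We must glue $s_1$ to a section on $C'$ agreeing at $y$. Global generation of $E|_{C'}$ gives $s'\in H^0(C',E|_{C'})$ with $s'(y)=s_1(y)$. The pair $(s_1,s')$ lies in the kernel of the difference map, hence defines a global section of $E$ with value $v$ at $x$.

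If $x=y$ is the node, the fiber $E_y$ is shared. Choose $s_1$ with $s_1(y)=v$, then $s'$ with $s'(y)=v$, and glue as before.

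The only delicate point is the exactness of the gluing sequence at a node, i.e.\ that sections on $C$ are exactly compatible pairs of sections on the two pieces. I would verify this locally from the description $\mathcal O_{C,y}=\{(f,g)\in\mathcal O_{C_1,y}\times\mathcal O_{C',y}: f(y)=g(y)\}$. Everything else is the tree structure ensuring a single attaching point at each induction step; a cycle would add a nonzero $H^1$ and is exactly what the tree hypothesis excludes.
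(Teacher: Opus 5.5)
Your proposal is correct and follows the paper's proof essentially step for step. Both arguments induct by removing a leaf component and use the normalization sequence at the single attaching node. Surjectivity of evaluation on one component kills $H^1(C,E)$, and global generation comes from gluing componentwise sections with matching values at the node, including the case of the node itself.
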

	
	\begin{proof}
		We argue by induction on the number of irreducible components.  The
		one-component case is the hypothesis.  Otherwise choose a leaf component
		$L$ of the dual tree, let $C'$ be the closure of $C\setminus L$, and write
		$x=L\cap C'$.  Let $j_{C'}:C'\hookrightarrow C$ and $j_L:L\hookrightarrow C$ be the
		inclusions, and let $E_x$ also denote the skyscraper sheaf with that fiber.
		The normalization sequence at $x$ is
		\[
		0\longrightarrow E\longrightarrow (j_{C'})_*(E|_{C'})\oplus(j_L)_*(E|_L)
		\longrightarrow E_x\longrightarrow0,
		\]
		where the last map is the difference of the two evaluations at $x$.  By
		induction, $E|_{C'}$ is globally generated and has vanishing first
		cohomology; global generation and $H^1(L,E|_L)=0$ are hypotheses for $L$.  The map on global sections to
		$E_x$ is surjective, since either component has surjective evaluation at
		$x$.  The exact segment
		\[
		H^0(C',E|_{C'})\oplus H^0(L,E|_L)\longrightarrow E_x
		\longrightarrow H^1(C,E)\longrightarrow0
		\]
		therefore gives $H^1(C,E)=0$.
		
		To prove global generation at $y\in C'\setminus\{x\}$, prescribe
		$v\in E_y$.  Choose $s'\in H^0(C',E|_{C'})$ with $s'(y)=v$ and
		$s_L\in H^0(L,E|_L)$ with $s_L(x)=s'(x)$.  The normalization sequence
		glues $(s',s_L)$ to a section of $E$ with value $v$ at $y$.
		For $y\in L\setminus\{x\}$, first prescribe the section on $L$ and then
		match on $C'$.  For $y=x$, prescribe the same value $v\in E_x$ on both
		components and glue.  These cases cover $C$ and complete the induction.
	\end{proof}
	
	For the comb, these normal-bundle conditions must
	persist under smoothing with a prescribed line-bundle twist.
	The degree-one projection then identifies the smoothed curve with the
	base and converts its normal bundle into a vertical tangent bundle.
	
	\begin{lemma}
		\label{lem:alg-normal-smoothing}
		Let $Y$ be a smooth projective variety, let $B$ be a smooth projective
		curve, and let $p:Y\to B$ be a morphism.  Let $C\subset Y$ be a connected
		nodal curve which is a local complete intersection in $Y$, and write $q=p|_C$.  Assume that
		\[
		q_*[C]=[B],
		\]
		that the normal bundle $N_{C/Y}$ is globally generated, and that
		\[
		H^1(C,N_{C/Y})=0.
		\]
		Let $M$ be a line bundle on $B$ and suppose in addition that
		\[
		H^1\bigl(C,N_{C/Y}\otimes q^*M\bigr)=0.
		\]
		Then $C$ has an embedded smoothing $C_t\subset Y$ over a pointed
		smooth curve $(T,0)$, with $C_0=C$, such that, for all
		sufficiently small nonzero $t$,
		\[
		H^1\bigl(C_t,N_{C_t/Y}\otimes(p|_{C_t})^*M\bigr)=0.
		\]
		The curve $C_t$ is connected and smooth, the morphism
		$p|_{C_t}:C_t\to B$ has degree one, and hence $p|_{C_t}$ is an isomorphism.  If
		$h_t:B\to Y$ is the resulting section, then
		\[
		N_{C_t/Y}\cong h_t^*T_{Y/B}.
		\]
	\end{lemma}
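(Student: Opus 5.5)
The plan is to use the standard deformation theory of embedded curves in a smooth projective variety, in the form available for local complete intersection nodal curves (Hartshorne--Hirschowitz, Graber--Harris--Starr). First, since $C\subset Y$ is a local complete intersection, the embedded deformations of $C$ are governed by $N_{C/Y}$: the Hilbert scheme $\operatorname{Hilb}(Y)$ has tangent space $H^0(C,N_{C/Y})$ at $[C]$ and obstruction space $H^1(C,N_{C/Y})=0$. Hence $\operatorname{Hilb}(Y)$ is smooth at $[C]$, of dimension $h^0(C,N_{C/Y})$. Next, I would show that a general point of the component through $[C]$ is a smooth curve. Global generation of $N_{C/Y}$ together with the lci hypothesis implies that the local-to-global map $H^0(C,N_{C/Y})\to \bigoplus_{x}T^1_{C,x}$ is surjective at the nodes; this is the standard argument, using the surjection $N_{C/Y}\to T^1_C$ and vanishing of $H^1$ of its kernel, which follows from global generation with $H^1(N)=0$ on each component, as in Lemma~\ref{lem:alg-tree-bundle}. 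Thus first-order deformations smoothing every node independently exist, and unobstructedness makes them integrable. Choosing a smooth pointed curve $(T,0)$ through $[C]$ in the Hilbert scheme whose tangent vector smooths all nodes produces an embedded smoothing $C_t$, with $C_t$ smooth for small $t\neq 0$, by the local analysis $xy=t$ at each node. Connectedness of $C_t$ follows from flatness and connectedness of $C_0$.

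Second, the cohomology conditions propagate. The universal family $\mathcal C\subset Y\times T$ is flat and lci over $T$, so $N_{\mathcal C/Y\times T}$ is locally free and restricts to $N_{C_t/Y}$ on each fiber. Therefore $\mathcal N\otimes (p\circ\mathrm{pr}_Y)^*M$ is a flat family on the proper map $\mathcal C\to T$. Upper semicontinuity of $h^1$, exactly as in the proof of Lemma~\ref{lem:alg-universal-transform}, gives $H^1(C_t,N_{C_t/Y}\otimes (p|_{C_t})^*M)=0$ for $t$ near $0$, after shrinking $T$. The hypothesis $H^1(C,N_{C/Y})=0$ is only used for smoothness of the Hilbert scheme, and the twisted one supplies the conclusion.

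Third, the degree. The cycle class $[C_t]$ is locally constant in a flat family, so $p_*[C_t]=p_*[C]=[B]$. Hence $p|_{C_t}:C_t\to B$ is a finite surjective morphism of degree one between smooth connected projective curves. It cannot contract $C_t$, since the pushforward is nonzero. A degree-one finite morphism of normal curves is birational and finite, hence an isomorphism by Zariski's main theorem or normality. Let $h_t=(p|_{C_t})^{-1}$, composed with the inclusion. Then $h_t$ is a section of $p$ over $B$, so $p$ is smooth along $h_t(B)$, its differential being split by $dh_t$. The tangent sequence $0\to TC_t\to TY|_{C_t}\to N_{C_t/Y}\to0$ and the relative sequence $0\to T_{Y/B}\to TY\to p^*TB\to 0$ along $C_t$ are compared through $TC_t\cong h_t^*p^*TB$ via $dp$. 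The composite $T_{Y/B}|_{C_t}\to N_{C_t/Y}$ is then injective, since vertical vectors are not tangent to a section, and it is a map of bundles of equal rank. So it is an isomorphism, giving $N_{C_t/Y}\cong h_t^*T_{Y/B}$.

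The main obstacle is the first step: producing a smoothing of all nodes simultaneously inside $Y$ while staying on a smooth point of the Hilbert scheme. The delicate point is the surjectivity onto the node-smoothing directions $\bigoplus T^1_{C,x}$. I would try to cite \cite[Lemma~2.6 and Proposition~2.3-type statements]{GHS03} or Hartshorne--Hirschowitz, checking that the global generation and $H^1$ hypotheses are exactly what those results require for a tree of smooth components.
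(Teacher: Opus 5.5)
Your overall route matches the paper's:
\begin{itemize}
\item unobstructedness of the Hilbert scheme at $[C]$ from $H^1(C,N_{C/Y})=0$;
\item integrating a node-smoothing tangent vector to a one-parameter family;
\item local freeness and base change of the normal sheaf of the lci family;
\item semicontinuity for the twisted $H^1$;
\item degree one and the normal-bundle identification.
\end{itemize}
Your cycle-class argument for $p_*[C_t]=[B]$ is an acceptable substitute for the paper's Euler-characteristic computation with an ample line bundle on $B$.

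\emph{The gap is in the step you flag yourself.} You claim that $H^0(C,N_{C/Y})\to\bigoplus_xT^1_{C,x}$ is surjective, i.e.\ that the nodes can be smoothed independently. You justify this by $H^1$-vanishing of the kernel $N'_C$ of $N_{C/Y}\to T^1_C$, ``as in Lemma~\ref{lem:alg-tree-bundle}''. That lemma concerns locally free sheaves and does not apply to $N'_C$. Moreover, since $H^1(C,N_{C/Y})=0$, the group $H^1(C,N'_C)$ is exactly the cokernel of the map you want to be onto, and it need not vanish under the lemma's hypotheses.

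The comb to which the lemma is applied, with $Y=\widetilde{\mathcal X}$, is a counterexample. All nodes lie on the handle $C_0$, so the image of $H^0(C,N_{C/Y})$ in $\bigoplus_xT^1_{C,x}$ lies in the image of $H^0(C_0,N_{C/Y}|_{C_0})$. The exact sequence $0\to N_{C_0/Y}\to N_{C/Y}|_{C_0}\to\bigoplus_xT^1_{C,x}\to0$, together with $H^1(C_0,N_{C/Y}|_{C_0})=H^1(B,\widetilde E')=0$, shows that the cokernel of that image is $H^1(C_0,N_{C_0/Y})$. This group contains $H^1(B,\mathcal O_B)\otimes T_{\mathbb P^3,y_0}$, which is nonzero as soon as $B$ has positive genus. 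So independent smoothing genuinely fails in the intended application.

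\emph{Independence is not needed.} You only need one $\sigma\in H^0(C,N_{C/Y})$ with nonzero image in every $T^1_{C,x}$.
\begin{itemize}
\item For each node, the functional $\lambda_x:H^0(C,N_{C/Y})\to N_{C/Y}|_x\to T^1_{C,x}\cong\C$ is onto. Evaluation is onto by global generation, and the smoothing quotient $N_{C/Y}|_x\to T^1_{C,x}$ is onto by the local model $uv=t$.
\item Hence each $\ker\lambda_x$ is a hyperplane, and any $\sigma$ outside the finite union of these hyperplanes smooths every node to first order.
\item Integrating $\sigma$ in the smooth Hilbert scheme gives local smoothing parameters with $a_x(0)=0$ and $a_x'(0)=\lambda_x(\sigma)\ne0$, so every node is smoothed for small $t\ne0$. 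New singularities away from the nodes are excluded by openness of smoothness.
\end{itemize}
This is what the paper does. With this replacement, the rest of your proposal goes through.
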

	
	\begin{proof}
		Global generation supplies a direction smoothing every node, and the
		untwisted $H^1$ vanishing makes the chosen smoothing direction unobstructed.  The
		twisted vanishing then persists by semicontinuity, while $q_*[C]=[B]$
		recovers a section from the smooth curve.
		
		At every node $x$ of $C$, the local embedded deformation space has a
		one-dimensional smoothing quotient
		\[
		N_{C/Y}|_x\longrightarrow T^1_{C,x}\cong\C.
		\]
		Here $T^1_{C,x}$ is the space of first-order deformations of the node.
		The map $N_{C/Y}|_x\to T^1_{C,x}$ is onto: in local coordinates in which the two branches are the
		coordinate axes, the deformation $uv=t$ smooths the node.  For each node $x$, compose evaluation with the smoothing quotient to obtain
		a surjection $\lambda_x:H^0(C,N_{C/Y})\to T^1_{C,x}\cong\C$.
		Global generation ensures surjectivity of evaluation.  Since the node set
		is finite, one can choose
		\[
		\sigma\in H^0(C,N_{C/Y})\setminus
		\bigcup_{x\in\operatorname{Sing}C}\ker\lambda_x.
		\]
		Then $\lambda_x(\sigma)\ne0$ at every node, so $\sigma$ smooths all nodes
		to first order.
		
		Since $C$ is a local complete intersection in the smooth variety $Y$,
		the embedded first-order deformations of $C$ have tangent space $H^0(C,N_{C/Y})$ and
		obstructions in $H^1(C,N_{C/Y})$ by
		\cite[Theorem~6.2(b) and Remark~6.2.1]{Hartshorne10}.
		The local lifting hypothesis in that theorem holds because $C$ is a local complete
		intersection in a smooth ambient variety.  Since $H^1(C,N_{C/Y})=0$, the
		Hilbert scheme is smooth at $[C]$, and the tangent vector $\sigma$
		integrates to a one-parameter embedded deformation
		\[
		\mathcal C\subset Y\times T
		\]
		over a smooth pointed curve $(T,0)$.  At each original node $x$, a local smoothing parameter $a_x(t)$
		satisfies $a_x(0)=0$ and $a'_x(0)=\lambda_x(\sigma)\ne0$.
		Thus $a_x(t)\ne0$ for small $t\ne0$.  Away from node neighborhoods,
		openness of smoothness and compactness of $C$ exclude new singularities
		after shrinking $T$.  Moreover,
		$h^0(C,\mathcal O_C)=1$.  Upper semicontinuity gives
		$h^0(C_t,\mathcal O_{C_t})\leq1$, while constants give the opposite
		inequality; hence every sufficiently small nonzero fiber $C_t$ is connected and smooth.
		
		After another shrinking, every fiber $C_t\subset Y$ is a local complete
		intersection.  Since $\mathcal C\to T$ is flat and locally of finite
		presentation and $Y\times T\to T$ is smooth, the fiberwise criterion for
		relative regular immersions shows that
		\[
		\mathcal C\hookrightarrow Y\times T
		\]
		is a regular immersion which remains regular after arbitrary base change on $T$
		\cite[Tags~063U and~063W]{Stacks}.  Consequently
		\[
		\mathcal N=N_{\mathcal C/(Y\times T)}
		\]
		is locally free.  Write $\mathcal I$ for the ideal of $\mathcal C$ in $Y\times T$
		and $I_t$ for the ideal of $C_t$ in $Y$.  Then
		\[
		(\mathcal I/\mathcal I^2)|_{C_t}\cong I_t/I_t^2.
		\]
		Dualizing these locally free conormal sheaves gives
		\[
		\mathcal N|_{C_t}\cong N_{C_t/Y}
		\]
		for every $t$.  Put
		\[
		\mathcal M=(p\circ\operatorname{pr}_Y|_{\mathcal C})^*M.
		\]
		The sheaf $\mathcal N\otimes\mathcal M$ is locally free on the flat family
		$\mathcal C\to T$, hence flat over $T$.  Upper semicontinuity and the
		assumed vanishing on $C=C_0$ give
		\[
		H^1\bigl(C_t,N_{C_t/Y}\otimes(p|_{C_t})^*M\bigr)=0
		\]
		for all sufficiently small $t$.
		
		Finally, let $L$ be an ample line bundle on $B$.  The pullback of $L$ and the
		structure sheaf on the proper flat family $\mathcal C\to T$ are flat over
		$T$.  After shrinking to a connected neighborhood of $0$, both Euler
		characteristics are constant \cite[Tag~0B9T]{Stacks}.  Riemann--Roch gives
		\[
		\deg((p|_{C_t})^*L)
		=\chi\bigl(C_t,(p|_{C_t})^*L\bigr)-\chi(C_t,\mathcal O_{C_t}).
		\]
		Thus $\deg((p|_{C_t})^*L)$ is constant and equals $\deg L>0$,
		because $q_*[C]=[B]$.  A constant map has pullback degree zero, so
		$p_t=p|_{C_t}$ is nonconstant.  For a nonconstant map of projective curves,
		\[
		\deg(p_t^*L)=\deg(p_t)\deg L.
		\]
		It follows that $\deg(p_t)=1$.  A degree-one morphism between smooth connected
		projective curves is an isomorphism.  Let $h_t:B\to Y$ be the inverse of $p_t$ followed by the inclusion
		$C_t\hookrightarrow Y$.  Then $dp\circ dh_t=\id_{T_B}$, so $p$ is a
		submersion along $h_t(B)$.  The map
		\[
		\Pi:u\longmapsto u-dh_t(dp(u))
		\]
		projects $h_t^*TY$ onto $h_t^*T_{Y/B}$ with kernel $dh_t(TB)$.
		The map $\Pi$ identifies the normal quotient $h_t^*TY/dh_t(TB)$ with
		$h_t^*T_{Y/B}$, proving the normal-bundle assertion.
	\end{proof}
	
	We can now combine localized elementary transforms with the
	normal-bundle calculations.  Attaching teeth over a chosen Euclidean
	open set and smoothing the comb produces the required section with
	vanishing twisted first cohomology.
	For a smooth proper variety over an algebraically closed field of
	characteristic zero, rational connectedness is understood in the
	equivalent senses of \cite[Definition--Theorem~2.1]{Kol00}.
	
	\begin{proposition}
		\label{prop:alg-localized-free-section}
		Let
		\[
		p:\mathcal X\longrightarrow B
		\]
		be a projective morphism from a smooth projective complex variety to a
		smooth projective curve.  Assume that the geometric generic fiber is smooth
		and rationally connected.  Let $R\subset B$ be an open set over which $p$
		is smooth with rationally connected fibers.  Given an effective divisor
		$D$ supported in $R$ and a nonempty Euclidean open set
		\[
		O\Subset R\setminus\operatorname{supp}D,
		\]
		there is an algebraic section $h:B\to\mathcal X$ such that
		\[
		H^1\bigl(B,h^*T_{\mathcal X/B}(-D)\bigr)=0.
		\]
		The comb construction can be localized over $O$: after the
		stabilization $\mathcal X\times\mathbb P^3\to B$, every tooth may be attached
		in a fiber over a point of $O$.
	\end{proposition}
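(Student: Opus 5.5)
We follow the Graber--Harris--Starr comb strategy, using the three preceding lemmas as the building blocks. By Graber--Harris--Starr, $p$ has a section $h_0:B\to\mathcal X$; its image lies in the smooth locus of $p$ over a dense open set, but it may pass through bad fibers. To make the comb live in a smooth ambient space with enough room, we pass to $\mathcal Y=\mathcal X\times\mathbb P^3$ with projection $p_Y:\mathcal Y\to B$. We replace $h_0$ by $h_0\times\iota$, where $\iota:B\hookrightarrow\mathbb P^3$ is an embedding. Then the handle $C_0=(h_0\times\iota)(B)$ is a smooth curve in the smooth projective variety $\mathcal Y$, and $q_*[C_0]=[B]$. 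The stabilization makes the normal bundle $N_{C_0/\mathcal Y}$ a genuine vector bundle of the expected rank, whether or not $h_0$ meets singular points of fibers. Since $\mathbb P^3$ is rationally connected, the fibers of $p_Y$ over $R$ remain smooth and rationally connected. At the end, a section of $\mathcal Y$ that is close to the handle projects to a section $h$ of $\mathcal X$. The relevant relative tangent bundle splits as $h^*T_{\mathcal X/B}\oplus\iota^*T_{\mathbb P^3}$, and $H^1$ of a direct sum vanishes iff it vanishes on each summand, so vanishing for the stabilized bundle gives the assertion.

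\emph{Choosing the transform.} Over $R$ the handle lies in the smooth locus and $N_{C_0/\mathcal Y}|_{p^{-1}R}=h_Y^*T_{\mathcal Y/B}$. Apply Lemma~\ref{lem:alg-localized-transform} to $E=N_{C_0/\mathcal Y}$ on $B\cong C_0$, with the open set $O$ and $N=\deg D$, and also with $N=0$. This gives $m$ and a Zariski-open dense $\mathscr U_m$, and we intersect over the finitely many twists needed: $-D$, the untwisted case, and one extra ample twist used for global generation, with Serre vanishing applied to $E^+_\eta(-D-x)$ uniformly for all points $x$ through Riemann--Roch as in that lemma. We impose an additional Zariski-open dense condition $\mathscr G$: each direction $\xi_i$ at $b_i\in O$ is the tangent direction at $h_Y(b_i)$ of a very free rational curve $T_i\subset\mathcal Y_{b_i}$ through $h_Y(b_i)$. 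Theorem~\ref{thm:rc-facts}(i) shows that very free curves with a prescribed point realize a dense open set of tangent directions, since the evaluation of jets is dominant. We then pick $\eta\in\mathscr U_m\cap\mathscr G\cap\mathscr H_m(E;O)$. Attaching the teeth $T_i$ gives a comb $C=C_0\cup\bigcup T_i$ with tree dual graph. This comb is a local complete intersection, because the stabilized dimension makes the teeth meet the handle transversally, and the teeth can be taken to be embedded since $\dim\ge3$ in the $\mathbb P^3$ factor.

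\emph{Normal bundle of the comb.} Standard local calculation gives $N_{C/\mathcal Y}|_{C_0}\cong E^+_\eta$, which is the positive elementary transform in the tangent directions of the teeth. It also gives $N_{C/\mathcal Y}|_{T_i}\cong N_{T_i/\mathcal Y}$ transformed at the node, where $N_{T_i/\mathcal Y}\cong N_{T_i/\mathcal Y_{b_i}}\oplus\mathcal O$. The trivial summand is the horizontal direction, and the handle's tangent projects nontrivially to it. Very freeness makes $N_{T_i/\mathcal Y_{b_i}}$ ample, so Lemma~\ref{lem:alg-tooth-transform} makes each tooth restriction globally generated with $H^1=0$. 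On the handle, the cohomology vanishing for $E^+_\eta(-x)$ at every point $x$ gives both global generation and $H^1=0$; the same reasoning gives $H^1(E^+_\eta(-D))=0$. Lemma~\ref{lem:alg-tree-bundle} then gives global generation and $H^1(C,N_{C/\mathcal Y})=0$. For the twisted vanishing we use $M=\mathcal O_B(-D)$. Since $D$ is disjoint from $O$, $q^*M$ is trivial on the teeth, so the tree argument of Lemma~\ref{lem:alg-tree-bundle} gives $H^1(C,N\otimes q^*M)=0$. Global generation is needed only on the teeth for the gluing surjectivity, and the leaf-induction in that proof requires only surjective evaluation at the node from one side.

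\emph{Smoothing and the main obstacle.} Lemma~\ref{lem:alg-normal-smoothing} now yields a smoothing $C_t\cong B$, which is the image of a section $h_t$, with $H^1(B,h_t^*T_{\mathcal Y/B}(-D))=0$. Projecting to $\mathcal X$ gives $h$. The step I expect to be most delicate is identifying $N_{C/\mathcal Y}|_{C_0}$ with exactly the elementary transform in the chosen directions. This has to be compatible with the genericity condition $\mathscr G$: very free teeth must exist in every generic direction at points of $O$. Establishing this requires the dominance of the tangent-direction map from the space of very free curves through a fixed point, uniformly for finitely many points.
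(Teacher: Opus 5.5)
\textbf{Overall structure.} Your architecture is the paper's: a Graber--Harris--Starr section, stabilization by $\mathbb P^3$, Lemma~\ref{lem:alg-localized-transform} with an extra genericity condition, teeth from Theorem~\ref{thm:rc-facts}(i), then Lemmas~\ref{lem:alg-tooth-transform}, \ref{lem:alg-tree-bundle}, \ref{lem:alg-normal-smoothing}, and finally splitting off the $\mathbb P^3$ summand. Using an embedding $\iota$ instead of a constant point $y_0$ is harmless.

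\textbf{The gap: embeddedness of the teeth.} This comes from your choice of $\mathscr G$.
\begin{itemize}
\item Your condition is vacuous. Theorem~\ref{thm:rc-facts}(i) realizes \emph{every} tangent vector at \emph{every} point by a morphism $u$ with $H^1(\Pone,u^*T(-2[0]))=0$, so no dominance argument is needed.
\item That theorem gives only a morphism, not an embedding.
\item Your $\mathscr G$ does not exclude directions $\xi_i$ lying in the summand $h_0^*T_{\mathcal X/B}$. For such a direction, the $\mathbb P^3$-component of any tooth has zero derivative at the node. It cannot be a line, so the $\mathbb P^3$ factor does nothing for embeddedness.
\item Appealing to ``$\dim\geq3$'' would require a general-position theorem for deformations with a fixed first jet. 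That needs more positivity than the $-2[0]$ vanishing and is not among the tools at hand.
\item Without embedded teeth the comb is not a nodal local complete intersection curve. Then neither the normal-sheaf identifications (on the handle and on the teeth) nor Lemma~\ref{lem:alg-normal-smoothing} applies.
\end{itemize}

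\textbf{The fix.} This is the paper's construction.
\begin{itemize}
\item Take $\mathscr G$ to be the locus where every $\xi_i$ has nonzero projection to the $\mathbb P^3$ summand. It is Zariski open and dense; its complement is the union of the loci $\xi_i\subset h_0^*T_{\mathcal X/B}$.
\item Let $\nu_i$ be the line in $\mathbb P^3$ through the handle's $\mathbb P^3$-point, tangent to that projection.
\item Let $v_i$ be the unique vertical vector with $(v_i,\nu_i'(0))\in\xi_i$, and take $u_i:\Pone\to\mathcal X_{b_i}$ from Theorem~\ref{thm:rc-facts}(i) with $u_i'(0)=v_i$.
\item Set $T_i=(u_i,\nu_i)(\Pone)$.
\end{itemize}
Then $T_i$ is embedded because $\nu_i$ is, and its tangent at the node is $\xi_i$. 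Its normal bundle in the fiber is ample, since $u_i^*T_{\mathcal X_{b_i}}$ (all splitting degrees $\geq1$) and $\nu_i^*T_{\mathbb P^3}$ are ample. Producing these teeth is the real purpose of the stabilization. Making $N_{C_0/\mathcal Y}$ a vector bundle is not: that is automatic, because $dp\circ dh_0$ is the identity of $T_B$ and the handle is a smooth curve in a smooth variety.

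\textbf{Smaller points.}
\begin{itemize}
\item The step you call most delicate is not. The identification $N_C|_{C_0}\cong E^+_\eta$ holds for any transverse attachment of a vertical tooth.
\item The twists should come from a single application of Lemma~\ref{lem:alg-localized-transform} with $N=\deg D+1$. Separate applications with different $N$ may produce different $m$, so their loci $\mathscr U_m$ cannot be intersected. Twisting up from $N=\deg D+1$ gives $H^1=0$ for the twists $-D-x$, $-D$, $-x$ and $0$. This is exactly what the handle needs for global generation and vanishing.
\end{itemize}
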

	
	\begin{proof}
		The theorem of Graber--Harris--Starr~\cite[Theorem~1.1]{GHS03} gives a
		section $h_0:B\to\mathcal X$.  Since $dp\circ dh_0=\id_{T_B}$,
		$p$ is a submersion along $h_0(B)$, even over a singular fiber.  Thus
		$h_0^*T_{\mathcal X/B}$ is a vector bundle on all of $B$.  For embedded
		teeth in every relative dimension, put
		\[
		\widetilde{\mathcal X}=\mathcal X\times\mathbb P^3,
		\qquad
		\widetilde p=p\circ\operatorname{pr}_{\mathcal X},
		\]
		choose $y_0\in\mathbb P^3$, and let
		\[
		\widetilde h_0=(h_0,y_0):B\longrightarrow\widetilde{\mathcal X}.
		\]
		The projective factor provides nonzero projected attachment directions and
		explicit embedded teeth, even if their projections to $\mathcal X$ are
		not immersed.  Each tooth meets the handle, whose projective component is
		constant, only at its attaching point.  Finally, the direct-sum decomposition
		of the relative tangent bundle transfers the required $H^1$ vanishing to the projected
		section $h$.  The fibers of
		$\widetilde p$ over $R$ are smooth rationally connected varieties of
		dimension at least three.  Along $\widetilde h_0(B)$, the splitting induced by
		$d\widetilde p\circ d\widetilde h_0=\operatorname{id}_{TB}$ gives
		\[
		\widetilde E:=N_{\widetilde h_0(B)/\widetilde{\mathcal X}}
		\cong h_0^*T_{\mathcal X/B}
		\oplus\bigl(T_{\mathbb P^3,y_0}\otimes\mathcal O_B\bigr).
		\]
		
		Apply Lemma~\ref{lem:alg-localized-transform} to $\widetilde E$ with
		$N=\deg D+1$.  For the integer $m$ supplied by Lemma~\ref{lem:alg-localized-transform}, let
		$\mathscr G\subset\mathscr H_m(\widetilde E)$ be the locus on which every
		chosen line has nonzero projection to the summand
		$T_{\mathbb P^3,y_0}$.  The locus $\mathscr G$ is Zariski open and dense: its complement is
		the union, over the $m$ entries, of the loci where the chosen line is
		contained in $h_0^*T_{\mathcal X/B}$.  Apply the final assertion of Lemma~\ref{lem:alg-localized-transform}
		with this choice of $\mathscr G$.  We obtain points $b_i\in O$, tangent directions
		$\xi_i\subset\widetilde E_{b_i}$, and a positive elementary transform
		$\widetilde E'$ such that
		\[
		H^1\bigl(B,\widetilde E'(-D-b)\bigr)=0
		\qquad(b\in B).
		\]
		For each $b\in B$, twisting upward by $b$, by $D$, or by $D+b$
		gives
		\[
		H^1(B,\widetilde E'(-D))=0,
		\qquad
		H^1(B,\widetilde E'(-b))=0,
		\qquad
		H^1(B,\widetilde E')=0.
		\]
		For either $F=\widetilde E'(-D)$ or $F=\widetilde E'$, denote the
		skyscraper sheaf with fiber $F|_b$ by $F_b$.  The exact sequence
		\[
		0\longrightarrow F(-b)\longrightarrow F\longrightarrow F_b\longrightarrow0
		\]
		and $H^1(B,F(-b))=0$ imply that $H^0(B,F)\to F_b$ is onto for every
		$b\in B$.  Thus both bundles are globally generated.
		
		The projective-space factor gives explicit embedded teeth.  Identify
		\[
		\widetilde E_{b_i}
		=T_{\mathcal X_{b_i},h_0(b_i)}
		\oplus T_{\mathbb P^3,y_0}.
		\]
		Let $\ell_i\subset T_{\mathbb P^3,y_0}$ be the projection of $\xi_i$;
		$\ell_i$ is a line because $\xi_i$ has nonzero projective-space projection.  Let
		\[
		\nu_i:\Pone\hookrightarrow\mathbb P^3
		\]
		be the projective line through $y_0$ with tangent line $\ell_i$, and choose
		its parameter so that $\nu_i(0)=y_0$.  Put $w_i=\nu_i'(0)$.  The projection maps $\xi_i$ isomorphically onto
		$\ell_i$, so there is a unique $v_i\in T_{\mathcal X_{b_i},h_0(b_i)}$ such that $(v_i,w_i)\in\xi_i$.
		Theorem~\ref{thm:rc-facts}(i), applied in the smooth rationally connected
		fiber $\mathcal X_{b_i}$, gives a morphism
		\[
		u_i:\mathbb P^1\longrightarrow\mathcal X_{b_i}
		\]
		with $u_i(0)=h_0(b_i)$, $u_i'(0)=v_i$, and
		\[
		H^1\bigl(\mathbb P^1,u_i^*T_{\mathcal X_{b_i}}(-2[0])\bigr)=0.
		\]
		Write $u_i^*T_{\mathcal X_{b_i}}\cong\bigoplus_\nu\mathcal O(a_\nu)$.
		Since $h^1(\Pone,\mathcal O(a_\nu-2))=\max\{1-a_\nu,0\}$, the displayed
		vanishing implies $a_\nu\geq1$ for every summand.
		Put
		\[
		T_i=(u_i,\nu_i)(\mathbb P^1)
		\subset\mathcal X_{b_i}\times\mathbb P^3.
		\]
		The map $(u_i,\nu_i)$ is an embedding because $\nu_i$ is an embedding,
		and its tangent line at the attachment point is $\xi_i$.  Moreover,
		\[
		(u_i,\nu_i)^*T_{\widetilde{\mathcal X}_{b_i}}
		=u_i^*T_{\mathcal X_{b_i}}\oplus\nu_i^*T_{\mathbb P^3}
		\]
		is ample: the first summand is ample by $a_\nu\geq1$, and
		$\nu_i^*T_{\mathbb P^3}\cong\mathcal O(2)\oplus\mathcal O(1)^{\oplus2}$.
		Hence the quotient
		$N_{T_i/\widetilde{\mathcal X}_{b_i}}$ is ample.  Since the points
		$b_i$ are distinct, the teeth are pairwise disjoint.  The handle has fixed
		$\mathbb P^3$ coordinate $y_0$, and the embedding $\nu_i$ satisfies
		$\nu_i^{-1}(y_0)=\{0\}$.  Thus each tooth meets the handle at exactly its
		attachment point.  The two tangent lines there are distinct because the tooth's
		tangent is vertical and the handle maps isomorphically to $B$.  Hence
		\[
		C=\widetilde h_0(B)\cup T_1\cup\cdots\cup T_m
		\subset\widetilde{\mathcal X}
		\]
		is a connected nodal curve which is a local complete intersection in
		$\widetilde{\mathcal X}$ and whose dual graph is a tree.  Write
		$N_C=N_{C/\widetilde{\mathcal X}}$ and $q=\widetilde p|_C$.
		
		The normal-sheaf calculation of \cite[Lemma~2.6]{GHS03} identifies
		\[
		N_C\otimes\mathcal O_{\widetilde h_0(B)}\cong\widetilde E'.
		\]
		On a tooth $T_i$, the exact sequence
		\[
		0\longrightarrow N_{T_i/\widetilde{\mathcal X}_{b_i}}
		\longrightarrow N_{T_i/\widetilde{\mathcal X}}
		\longrightarrow\mathcal O_{T_i}\longrightarrow0
		\]
		splits because
		\[
		\operatorname{Ext}^1(\mathcal O_{T_i},
		N_{T_i/\widetilde{\mathcal X}_{b_i}})
		=H^1(T_i,N_{T_i/\widetilde{\mathcal X}_{b_i}})=0;
		\]
		the last vanishing follows from ampleness on $T_i\cong\Pone$.  By
		\cite[Proposition~23]{HT06}, applied with ambient variety
		$\widetilde{\mathcal X}$, smooth divisor $\widetilde{\mathcal X}_{b_i}$,
		curve $T_i$ in that divisor, and the handle as the transverse branch,
		restriction of the normal bundle of the nodal union to $T_i$ is the positive
		elementary transform of this split bundle in the direction of the handle.  The handle direction has nonzero
		projection to the last trivial summand, since the handle is transverse to
		the fiber.  Lemma~\ref{lem:alg-tooth-transform} therefore gives
		\[
		N_C\otimes\mathcal O_{T_i}
		\cong N_{T_i/\widetilde{\mathcal X}_{b_i}}
		\oplus\mathcal O_{\Pone}(1),
		\]
		which is globally generated and has vanishing first cohomology.
		
		The handle maps isomorphically to $B$ and every tooth is vertical, so
		$q_*[C]=[B]$.  Let
		\[
		\mathcal L=q^*\mathcal O_B(-D).
		\]
		The restriction of $\mathcal L$ to the handle is $\mathcal O_B(-D)$;
		the restriction of $\mathcal L$ to
		each vertical tooth is trivial.  Consequently both $N_C$ and
		$N_C\otimes\mathcal L$ restrict to globally generated bundles with
		vanishing first cohomology on every irreducible component.  By
		Lemma~\ref{lem:alg-tree-bundle},
		\[
		\begin{gathered}
			N_C\text{ is globally generated},\qquad H^1(C,N_C)=0,\\
			N_C\otimes\mathcal L\text{ is globally generated},\qquad
			H^1(C,N_C\otimes\mathcal L)=0.
		\end{gathered}
		\]
		
		Apply Lemma~\ref{lem:alg-normal-smoothing} with
		$Y=\widetilde{\mathcal X}$ and $M=\mathcal O_B(-D)$.  This gives a nearby
		smooth curve which is the image of a section
		\[
		\widetilde h=(h,\ell):B\longrightarrow
		\mathcal X\times\mathbb P^3
		\]
		and satisfies
		\[
		H^1\bigl(B,\widetilde h^*T_{\widetilde{\mathcal X}/B}(-D)\bigr)=0.
		\]
		Since
		\[
		\widetilde h^*T_{\widetilde{\mathcal X}/B}(-D)
		\cong h^*T_{\mathcal X/B}(-D)
		\oplus\ell^*T_{\mathbb P^3}(-D),
		\]
		the first cohomology of each direct summand vanishes.  In particular,
		\[
		H^1\bigl(B,h^*T_{\mathcal X/B}(-D)\bigr)=0,
		\]
		as required.  Every tooth used above lies over one of the chosen points
		$b_i\in O$.
	\end{proof}
	
	The localization conclusion concerns the attachment fibers; the smoothed
	section need not equal $h_0$ outside $O$.
	
	\section{Compact approximation and algebraic liftings}
	\label{sec:compact-algebraic-proof}
	
	\subsection{Approximation on a compact source}
	
	The compact approximation argument transports the equation by a smooth bundle
	isomorphism $J$, represented locally by $G$.  This introduces the linear
	coefficient $L=G^{-1}\dbar G$ in addition to the residual $e$.  The disk
	construction must make the residual small in $L^4$ and the multiplication
	operator defined by the linear coefficient small from $W^{1,4}$ to $L^4$ before
	Lemma~\ref{lem:fixed-bundle-solution} applies.  Near the prescribed data,
	holomorphy of the section makes $e=0$, and holomorphy of $J$ makes $L=0$;
	formula~\eqref{eq:alg-constant-linear-defects} records these two roles.
	
	\begin{theorem}
		\label{thm:alg-compact-global}
		Let $\Sigma$ be a compact Riemann surface, let $X$ be a complex manifold,
		and let $p:X\to\Sigma$ be a proper holomorphic map.  Let $\Omega\subset\Sigma$ be open such that
		$p^{-1}(\Omega)\to\Omega$ is a holomorphic submersion with the relative
		first-jet sphere-family property.  Let $s:\Sigma\to X$ be a smooth section whose image lies
		in the submersion locus of $p$.  Choose a vertical local addition near
		$s(\Sigma)$, and let $E_{D_s}$ be $s^*T_{X/\Sigma}$ with the
		holomorphic structure given by $D_s=d\mathscr S_s(0)$ in
		Lemma~\ref{lem:rel-normal-form}.  Let $K\subset\Sigma$ be compact, let
		$A\subset\Sigma$ be finite, and prescribe an integer $k_a\geq0$ for each $a\in A$.  Put
		\[
		D=\sum_{a\in A}(k_a+1)a.
		\]
		Assume that there is an open set $U_h\subset\Sigma$ such that
		\[
		K\cup A\cup(\Sigma\setminus\Omega)\subset U_h,
		\]
		$s$ is holomorphic on $U_h$, and there are a holomorphic vector bundle
		$V\to\Sigma$ with $H^1(\Sigma,V)=0$ and a smooth complex bundle isomorphism
		\[
		J:V\longrightarrow E_D:=E_{D_s}(-D)
		\]
		which is holomorphic on $U_h$.  Then, for every $\epsilon>0$, there is a
		holomorphic section $F:\Sigma\to X$ of $p$ such that
		\[
		\sup_Kd_X(F,s)<\epsilon,
		\qquad
		j_a^{k_a}F=j_a^{k_a}s\quad(a\in A).
		\]
	\end{theorem}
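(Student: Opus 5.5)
The plan is to rerun the proof of Theorem~\ref{thm:rel-bordered}, now on the compact surface $\Sigma$ itself and with the fixed bundle $V$ in place of $\widetilde E$. Since $s$ is holomorphic on $U_h\supset A$, Lemma~\ref{lem:rel-twist} gives the twisted operator $\mathscr S_D$ on $E_D$ near the zero section. It is defined on all of $\Sigma$ because the image of $s$ lies in the submersion locus. Transport it by $J$: for a small section $v$ of $V$ put
\[
\mathscr S^V(v)=J^{-1}\mathscr S_D(Jv).
\]
In a $\dbar_V$-holomorphic frame of $V$ and a $\dbar_{E_D}$-holomorphic frame of $E_D$, let $G$ represent $J$. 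Then
\begin{equation}\label{eq:alg-constant-linear-defects}
\mathscr S^V(v)=\dbar v+Lv+G^{-1}A_D(z,Gv),\qquad L=G^{-1}\dbar G,\qquad e=G^{-1}A_D(z,0).
\end{equation}
Holomorphy of $s$ on $U_h$ gives $e=0$ there. Holomorphy of $J$ on $U_h$ gives $L=0$ there, and Lemma~\ref{lem:rel-twist} gives $(A_D)_v(z,0)=0$. So the zero-order coefficient $\mathcal A(z,v)=Lv+G^{-1}A_D(z,Gv)$ has vanishing constant and linear terms on $U_h$. Both defects are supported in the compact set $P=\Sigma\setminus U_h\subset\Omega$, over which the sphere-family property is available. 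Here there is no cutoff $\chi$ and no auxiliary disks are needed, because $V$ already has $H^1(\Sigma,V)=0$ globally.

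Fix the right inverse $T$ of $\dbar_V:W^{1,4}\to L^4_{0,1}$ before choosing any disks. Next apply Lemma~\ref{lem:rel-uniform-matching} to the family
\[
\Phi(z,q)=\Psi\bigl(s(z),\iota_zJ_zq\bigr)
\]
over a neighborhood of $P$, written in finitely many $\dbar_V$-frames and shrunk so that it avoids $\operatorname{supp}D$. Here $d_q\Phi(z,0)=\iota_zJ_z$ is invertible on $P$. The coefficient $C^\Phi$ of that lemma is exactly $\mathcal A$ in the chosen frame, including the linear term $L$: the derivation of \eqref{eq:rel-factorization} shows that the normalized $\dbar$ of $\Phi(z,q(z))$ is $\dbar q+C^\Phi(z,q)$. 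Select disjoint disks by the Vitali argument of Step~5 inside $\{e\ne0\}\cup\{L\ne0\}$, more precisely inside a neighborhood of $P$ where matching data exist, with uncovered area at most $r^{1/2}$ within $P$. Glue the coefficient: $\mathcal A$ outside the disks, $C^M$ on the transition annuli, and $0$ on the cores. This gives $\mathscr S_r$ on $V$ with fiber bounds uniform in $r$.

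Then $\mathscr S_r(0)$ vanishes off the uncovered part of $P$ union the annuli. Also, $d\mathscr S_r(0)-\dbar_V$ is multiplication by a bounded coefficient vanishing off the same set: it is $L$ plus the fiber derivative, which is zero on the cores and uniformly bounded on annuli by \eqref{eq:rel-physical-bounds}. The area estimates \eqref{eq:rel-uncovered}--\eqref{eq:rel-area} then give $\varepsilon_r=Cr^{1/8}$ for both quantities. Lemma~\ref{lem:fixed-bundle-solution} yields a smooth small solution $v_r$. Reconstruct exactly as in Step~7, globally on $\Sigma$: use $\Psi(s,\iota Jv_r)$ outside the disks, the matched family on the annuli, and $H_{c,\sigma}(z,q)$ on the cores. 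This gives a holomorphic section $F$ on all of $\Sigma$. Approximation on $K$ follows from $\|v_r\|_{C^0}\le Cr^{1/8}$, and the jets at $A$ follow from the factor $z^{k_a+1}$ in $\iota$, with $J$ bounded, exactly as at the end of Theorem~\ref{thm:rel-bordered}.

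The main obstacle is the linear defect $L$. It is a genuine first-order gauge term, not small, and nonzero on an open set. Smallness of $d\mathscr S_r(0)-\dbar_V$ therefore requires that $L$ also be removed on the disk cores. The plan handles this by folding $L$ into the matched coefficient through the choice $\Phi=\Psi(s,\iota Jq)$, so that on the cores the jointly holomorphic $H_{c,\sigma}$ kills both $e$ and $L$ at once. The step to verify carefully is that the bounds of Lemma~\ref{lem:rel-uniform-matching} stay uniform when $\Phi$ includes the nonholomorphic factor $J$. I expect this to hold because $J$ is smooth on a compact set, so only finitely many mixed derivative bounds are used.
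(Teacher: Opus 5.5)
Your proposal is correct and is essentially the paper's proof: transport $\mathscr S_D$ by $J$, isolate the constant and linear defects $e=G^{-1}A_D(z,0)$ and $L=G^{-1}\dbar G$ (both vanishing on $U_h$), and build the matched families from $\Phi(z,q)=\Psi(s(z),\iota_zJ_z\tau_zq)$ so that $C^\Phi$ equals the full gauged coefficient $\mathcal A$ including $L$. Then select only rational disks, with no auxiliary disks, inside $U=\{|e|+|L|>0\}$, and apply Lemma~\ref{lem:fixed-bundle-solution} on $V$ with $\varepsilon_r=Cr^{1/8}$ before the global reconstruction.

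The point you flagged is not an obstacle. Lemma~\ref{lem:rel-uniform-matching} only requires $\Phi$ to be smooth in $z$ and holomorphic in $q$, with $d_q\Phi(z,0)$ invertible on the compact center set. Do keep the disks inside $U$ itself, as your first phrasing says, rather than merely near $P$, so that they avoid $U_h\supset K\cup A$.
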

	
	\begin{proof}
		Put $n=\operatorname{rank}V=\operatorname{rank}E_D$.  Fix smooth metrics
		and an area form on $\Sigma$, and a bounded right
		inverse $T$ for $\dbar_V$, using $H^1(\Sigma,V)=0$ and
		Lemma~\ref{lem:fixed-bundle-solution}.  These choices precede the disk
		construction.  Let $C_S$ denote the Sobolev embedding constant for
		$W^{1,4}(\Sigma,V)\hookrightarrow C^0(\Sigma,V)$.
		We divide the proof into six steps.
		
		\medskip\noindent
		\emph{Step 1: the equation on the fixed bundle with vanishing first cohomology.}
		The local addition in the statement can be obtained by the finite-chart
		argument of Lemma~\ref{lem:rel-addition}, applied near the compact image
		$s(\Sigma)$ in the submersion locus.  Denote this local addition by $\Psi$.
		Its domain contains a fixed
		neighborhood of the zero section in $s^*T_{X/\Sigma}$.
		Applying Lemma~\ref{lem:rel-twist} with jet divisor $D$ gives the
		twisted section equation $\mathscr S_D$ on a fixed fiber neighborhood in $E_D$.
		
		Pull $\mathscr S_D$ back through $J$:
		\[
		\mathscr T(v)=J^{-1}\mathscr S_D(Jv).
		\]
		In a holomorphic frame of $V$, $\mathscr T$ has the form
		\begin{equation}
			\mathscr T(v)=\dbar_Vv+\mathcal A(z,v),
			\label{eq:alg-gauged-equation}
		\end{equation}
		where $\mathcal A$ is smooth in $z$, holomorphic in $v$, and contains no derivative
		of $v$.  Put
		\[
		e=\mathcal A(\,\cdot\,,0),
		\qquad
		L=d_v\mathcal A(\,\cdot\,,0).
		\]
		In holomorphic frames of $V$ and $E_D$, if $J$ is represented by
		$G(z)\in\operatorname{GL}_n(\C)$ and the coefficient of $\mathscr S_D$ is
		$A_D$, then
		\[
		\mathcal A(z,q)=G^{-1}(\dbar G)q+G^{-1}A_D(z,Gq).
		\]
		Since $(A_D)_q(z,0)=0$, evaluating at $q=0$ and differentiating in $q$ give
		\begin{equation}
			e=G^{-1}A_D(z,0),
			\qquad
			L=G^{-1}\dbar G.
			\label{eq:alg-constant-linear-defects}
		\end{equation}
		On $U_h$, holomorphy of $s$ gives $A_D(z,0)=0$ and hence
		$e=0$; holomorphy of $J$ gives $\dbar G=0$ and hence $L=0$.  Hence
		\[
		U=\{z\in\Sigma:|e(z)|+|L(z)|>0\}
		\]
		satisfies
		\[
		\overline U\subset\Sigma\setminus U_h
		\Subset\Omega\setminus\operatorname{supp}D.
		\]
		In particular, $\overline U$ is compact and avoids the jet divisor.  If $U=\varnothing$, then
		$e=L=0$ and $\mathscr T(0)=0$.  The original section $s$ is already
		holomorphic, so $F=s$ proves the theorem.  We henceforth assume that
		$U\ne\varnothing$.
		
		\medskip\noindent
		\emph{Step 2: local families for rational disks.}
		Cover $\overline U$ by finitely many source coordinates, relative target
		charts, and holomorphic frames
		$\tau_z:\C^n\to V_z$.  In such a frame, put
		\begin{equation}
			\Phi(z,q)=\Psi\bigl(s(z),\iota_zJ_z\tau_zq\bigr),
			\label{eq:alg-physical-family}
		\end{equation}
		where $\iota:E_D\to E_{D_s}$ is the divisor morphism.  For $z\in\overline U$, the maps $\iota_z$, $J_z$, and $\tau_z$ are
		isomorphisms, and
		\[
		d_q\Phi(z,0)=d_u\Psi(s(z),0)\circ\iota_zJ_z\tau_z
		=\iota_zJ_z\tau_z.
		\]
		Thus $d_q\Phi(z,0)$ is an isomorphism.  Lemma~\ref{lem:rel-uniform-matching}
		and its finite-local-family formulation therefore provide finitely many
		source neighborhoods, relative target charts, parameter balls, and prepared
		and matched families with common constants.  Each source neighborhood
		retains its chosen holomorphic frame of $V$, which will be used for every
		disk selected there.
		
		We verify the outer-collar coefficient.  Let $\phi$ be the vertical
		coordinate map of a selected relative target chart, and write
		$\widehat\Phi=\phi\circ\Phi$.  For a smooth map $q=q(z)$ with values in the chosen parameter ball,
		\[
		(\partial_q\widehat\Phi)^{-1}
		\dbar[\widehat\Phi(z,q(z))]
		=\dbar q+C^\Phi(z,q),
		\]
		where
		\[
		C^\Phi=(\partial_q\widehat\Phi)^{-1}
		\partial_{\bar z}\widehat\Phi\,d\bar z.
		\]
		By the definitions of the normalized equation, the divisor twist, and the
		gauge, the left side equals
		$\tau^{-1}J^{-1}\mathscr S_D(J\tau q)$, the coordinate expression in
		the frame $\tau$.  Comparing with
		\eqref{eq:alg-gauged-equation} gives
		\begin{equation}
			C^\Phi=\mathcal A
			\label{eq:alg-outer-coefficient}
		\end{equation}
		in the chosen frame.  Thus the coefficient of the matched family agrees with the
		gauged global coefficient on the outer collar of each selected disk and is exactly zero on the
		inner collar of that disk.
		
		\medskip\noindent
		\emph{Step 3: a finite Vitali selection.}
		Choose a finite smooth triangulation sufficiently fine that every closed triangle
		meeting $\overline U$ is contained in one of the smaller source
		neighborhoods chosen in Step~2.  Enumerate the closed triangles meeting $U$
		as $T_1,\ldots,T_M$.  Each $T_j$ inherits the source coordinate, frame, and
		local families from its assigned neighborhood.
		
		Before choosing $r$, fix a common fiber neighborhood for the equation.
		For the finitely many frames $\tau$, choose $C_F\geq1$ with
		$|\tau_z^{-1}v|\leq C_F|v|$ on their compact source sets. Let $d_*>0$
		be smaller than all parameter radii fixed in Step~2. Choose $\rho>0$
		so that $C_F\rho<d_*$ and the ball bundle
		$V(\rho)=\{v\in V:|v|<\rho\}$ lies in the domain of $\mathcal A$.
		These choices depend only on the fixed local data, before the scale $r$
		and the disks are selected.
		
		Given $0<r<\min\{1/16,\sigma_*\}$, where $\sigma_*$ is the common
		threshold from Lemma~\ref{lem:rel-uniform-matching}, use in each $T_j$
		the candidate disks $\Delta^+_{c,\sigma}$ from \eqref{eq:rel-disk-geometry} with
		\[
		c\in U\cap\operatorname{int}T_j,
		\qquad 0<\sigma\leq r,
		\qquad \overline{\Delta^+_{c,\sigma}}
		\subset U\cap\operatorname{int}T_j.
		\]
		The fixed smooth area form defines a Radon measure.  Apply the Vitali
		covering theorem for this measure as in Step~5 of
		Theorem~\ref{thm:rel-bordered}; the triangle edges have area zero.
		In each $T_j$, truncate the countable family so that the uncovered area in
		$U\cap\operatorname{int}T_j$ is at most $r^{1/2}/M$.
		The resulting finite family of pairwise disjoint closed disks satisfies
		\begin{equation}
			\operatorname{area}\left(
			U\setminus\bigcup_i\Delta^+_{c_i,\sigma_i}\right)
			\leq r^{1/2}.
			\label{eq:alg-uncovered}
		\end{equation}
		Put $h_i=\sigma_i^{3/2}$ and let $\mathcal T_r$ be the union of the
		transition annuli.  As in \eqref{eq:rel-area}, disjointness gives
		\begin{equation}
			\sum_i\sigma_i^2\leq C,
			\qquad
			\operatorname{area}(\mathcal T_r)\leq Cr^{1/2}.
			\label{eq:alg-annular-area}
		\end{equation}
		With $\zeta_i$ the centered source coordinate assigned to disk $i$, put
		\[
		\begin{aligned}
			\mathcal I_i&=\{z\in\Delta^+_{c_i,\sigma_i}:|\zeta_i|<\sigma_i-h_i/2\},\\
			\mathcal O_r&=\Sigma\setminus\bigcup_i
			\{z\in\Delta^+_{c_i,\sigma_i}:|\zeta_i|\leq\sigma_i+h_i/2\}.
		\end{aligned}
		\]
		The regions $\mathcal O_r,\mathcal I_i,\mathcal T_{c_i,\sigma_i}$ form
		an open cover of $\Sigma$.
		
		\medskip\noindent
		\emph{Step 4: the modified operator and the two small-area estimates.}
		On the fixed ball bundle $V(\rho)$ chosen in Step~3, let $\mathcal A_r$ be the coefficient obtained by using
		$\mathcal A$ over $\mathcal O_r$, the zero coefficient over every $\mathcal I_i$,
		and the coefficient of the matched family over the corresponding transition
		annulus.  Equation~\eqref{eq:alg-outer-coefficient} and the open-collar
		properties make $\mathcal A_r$ a smooth bundle map.  On overlaps of holomorphic frames,
		the zero-order coefficients transform tensorially, so the formulas
		represent one coefficient on $V(\rho)$.  The uniform estimates of
		Lemma~\ref{lem:rel-uniform-matching}
		give
		\begin{equation}
			\sup_{V(\rho)}
			\bigl(|\mathcal A_r|+|d_v\mathcal A_r|+|d_v^2\mathcal A_r|\bigr)\leq C,
			\label{eq:alg-fiber-bounds}
		\end{equation}
		with $C$ independent of the number and radii of the disks.  Put
		\[
		\mathbb X=W^{1,4}(\Sigma,V),
		\qquad
		\mathbb Y=L^4_{0,1}(\Sigma,V),
		\qquad
		\mathscr T_r(v)=\dbar_Vv+\mathcal A_r(z,v(z)),\qquad
		\|v\|_{C^0}<\rho.
		\]
		
		Put
		\[
		e_r=\mathcal A_r(\,\cdot\,,0),
		\qquad
		L_r=d_v\mathcal A_r(\,\cdot\,,0)
		\]
		and
		\[
		\mathcal E_r=\left(U\setminus\bigcup_i\Delta^+_{c_i,\sigma_i}\right)
		\cup\mathcal T_r.
		\]
		Both $e_r$ and $L_r$ vanish outside the measurable set $\mathcal E_r$.
		Indeed, the entire coefficient is zero on each inner region; on the
		unmodified outer region $\mathcal A_r=\mathcal A$ has constant and linear terms $e$ and $L$, which
		vanish off $U$; and on the annuli the two terms are uniformly bounded by
		\eqref{eq:alg-fiber-bounds}.  Equations
		\eqref{eq:alg-uncovered}--\eqref{eq:alg-annular-area} give
		\[
		\operatorname{area}(\mathcal E_r)\leq Cr^{1/2}.
		\]
		Consequently
		\begin{equation}
			\|\mathscr T_r(0)\|_{\mathbb Y}
			=\|e_r\|_{L^4}
			\leq Cr^{1/8}.
			\label{eq:alg-residual}
		\end{equation}
		Moreover, for every $\xi\in\mathbb X$,
		\[
		\begin{aligned}
			\|(d\mathscr T_r(0)-\dbar_V)\xi\|_{L^4}
			&=\|L_r\xi\|_{L^4(\mathcal E_r)}\\
			&\leq C\operatorname{area}(\mathcal E_r)^{1/4}\|\xi\|_{C^0}\\
			&\leq Cr^{1/8}\|\xi\|_{W^{1,4}},
		\end{aligned}
		\]
		where the last step uses $W^{1,4}\hookrightarrow C^0$.  Hence
		\begin{equation}
			\|d\mathscr T_r(0)-\dbar_V\|_{\mathbb X\to\mathbb Y}
			\leq Cr^{1/8}.
			\label{eq:alg-linear-defect}
		\end{equation}
		The linear gauge defect need not be pointwise small.  After the disk
		replacements, both $e_r$ and $L_r$ vanish outside $\mathcal E_r$,
		whose area is $O(r^{1/2})$.  The resulting $O(r^{1/8})$ estimates
		control the residual in $L^4$ and the linear perturbation from
		$W^{1,4}$ to $L^4$.  In the bordered construction, the residual can be
		nonzero only on the uncovered defect set and the transition annuli,
		whereas the linear perturbation vanishes outside the transition annuli of the rational disks.
		These estimates concern the measurable nonzero loci, not their closed supports.
		No small-area bound is asserted for the set on which the full nonlinear
		coefficient $\mathcal A_r(z,\cdot)$ differs from $\mathcal A(z,\cdot)$.
		
		\medskip\noindent
		\emph{Step 5: solution on the fixed bundle.}
		Apply Lemma~\ref{lem:fixed-bundle-solution} to $V$ and $\mathcal A_r$.
		The cohomology vanishing is a hypothesis, the uniform fiber bounds are
		\eqref{eq:alg-fiber-bounds}, and the two smallness assumptions follow from
		\eqref{eq:alg-residual} and \eqref{eq:alg-linear-defect}, with
		$\varepsilon_r=C_1r^{1/8}$ for a fixed sufficiently large $C_1$.
		Thus, for small $r$, there is a smooth solution $v_r$ satisfying
		\begin{equation}
			\mathscr T_r(v_r)=0,
			\qquad \|v_r\|_{W^{1,4}}\leq Cr^{1/8}.
			\label{eq:alg-solution-bound}
		\end{equation}
		For every assigned frame,
		\[
		|\tau_z^{-1}v_r(z)|\leq C_FC_S\|v_r\|_{W^{1,4}}
		\leq Cr^{1/8}<d_*
		\]
		for small $r$.  Thus each coordinate parameter belongs to its assigned
		ball.  Smoothness uses the open-collar
		construction for each fixed $r$, as in the solution lemma.
		
		\medskip\noindent
		\emph{Step 6: global target-valued reconstruction.}
		Let $q_i=\tau_i^{-1}v_r$, where $\tau_i$ is the holomorphic frame assigned
		to the $i$th selected disk.  On all of $\Sigma$, put
		\[
		F_r(z)=
		\begin{cases}
			H_{c_i,\sigma_i}(z,q_i(z)),&z\in\mathcal I_i,\\
			M_{c_i,\sigma_i}(z,q_i(z)),&z\in\mathcal T_{c_i,\sigma_i},\\
			\Psi\bigl(s(z),\iota_zJ_zv_r(z)\bigr),&z\in\mathcal O_r.
		\end{cases}
		\]
		On each inner collar, the cutoff $\eta_{c_i,\sigma_i}=1$ gives
		$M_{c_i,\sigma_i}=H_{c_i,\sigma_i}$; on each outer collar,
		$\eta_{c_i,\sigma_i}=0$ gives $M_{c_i,\sigma_i}=\Phi$.
		Thus the formulas glue smoothly.  Each branch projects to $z$ under $p$.
		On $\mathcal I_i$, the equation is $\dbar q_i=0$, so
		\[
		\dbar F_r=d_qH_{c_i,\sigma_i}(z,q_i(z))\,\dbar q_i=0.
		\]
		On $\mathcal T_{c_i,\sigma_i}$, equation~\eqref{eq:rel-factorization} gives
		\[
		\dbar(\phi\circ F_r)=\partial_q\Xi_{c_i,\sigma_i}
		\bigl(\dbar q_i+C^M_{c_i,\sigma_i}(z,q_i)\bigr)=0.
		\]
		On $\mathcal O_r$, use the identity
		\[
		\mathscr S_s(\iota Jv_r)=\iota\,\mathscr S_D(Jv_r)
		\]
		which extends smoothly across $\operatorname{supp}D$ by
		Lemma~\ref{lem:rel-twist}.  Since $\mathscr T_r=\mathscr T$ on
		$\mathcal O_r$, the outer branch satisfies
		\[
		\dbar F_r=d_u\Psi(s,\iota Jv_r)\,\iota J\,\mathscr T_r(v_r)=0.
		\]
		The equality holds also at points of $D$, by the smooth extension.
		These open regions cover $\Sigma$, so $F_r$ is a global
		holomorphic section of $p$.
		
		Every selected disk lies in $U$ and is disjoint from $U_h$, which contains
		$K\cup A$.  The approximation estimate uses only $U_h$.
		On a neighborhood of $K\cup A$,
		\[
		F_r=\Psi(s,\iota Jv_r).
		\]
		Let $C_\Psi$ bound the fiber derivative of $\Psi$ on the fixed neighborhood
		of the zero section used over $K$.  The bounds for $\iota$ and $J$ over
		a compact neighborhood of $K$, together with Sobolev embedding and
		\eqref{eq:alg-solution-bound}, give
		\[
		\sup_Kd_X(F_r,s)\leq C_\Psi\sup_K\|\iota_zJ_z\|\,\|v_r\|_{C^0}
		\leq Cr^{1/8}.
		\]
		Near $a\in A$, choose a source coordinate $z$ centered at $a$ and compatible
		holomorphic frames.  Then $\iota$ is multiplication by $z^{k_a+1}$ and $J$
		is holomorphic.  Boundedness of $Jv_r$ gives
		$\iota Jv_r=O(|z|^{k_a+1})$.  For a common target chart $\phi$,
		\[
		h(z)=\phi(F_r(z))-\phi(s(z))=O(|z|^{k_a+1}).
		\]
		The function $h$ is holomorphic, so $h(z)/z^{k_a+1}$ extends
		holomorphically across $0$.  Therefore $j_a^{k_a}F_r=j_a^{k_a}s$.
		Choose $r$ with $Cr^{1/8}<\epsilon$ to obtain the required approximation.
	\end{proof}
	
	\subsection{Completion of the algebraic proof}
	
	To apply the compact approximation theorem to an affine source, we first
	complete the family without changing the morphism over that source.
	
	\begin{lemma}
		\label{lem:alg-projective-model}
		Let $p_R:X_R\to R$ be a projective morphism from a smooth variety to a
		smooth affine curve.  There are a smooth projective completion
		$R\subset B$ and a projective morphism $p:\mathcal X\to B$ with
		$\mathcal X$ smooth projective such that $\mathcal X|_R\cong X_R$ over $R$.
	\end{lemma}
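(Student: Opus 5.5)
The plan is to take a projective closure of $X_R$ over a completion of the base, and then resolve singularities without touching the part lying over $R$.

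First, let $B$ be the unique smooth projective curve containing $R$ as a dense open subset. It is obtained as the normalization of any projective closure of $R$, and it is smooth because normal curves are regular. Since $p_R$ is projective, there is a closed embedding $X_R\hookrightarrow \mathbb P^N\times R$ over $R$. One can always realize projectivity this way over the affine curve $R$: a relatively very ample bundle gives an embedding into $\mathbb P(\mathcal F)$ for a coherent sheaf $\mathcal F$ on $R$, and a surjection $\mathcal O_R^{N+1}\twoheadrightarrow\mathcal F$, which exists because $R$ is affine, embeds $\mathbb P(\mathcal F)$ into $\mathbb P^N_R$. Now let $\overline X$ be the Zariski closure of $X_R$ in $\mathbb P^N\times B$.

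Second, observe what this closure gives and what it does not. The variety $\overline X$ is projective, the second projection $\bar p:\overline X\to B$ is projective, and $\overline X\cap(\mathbb P^N\times R)=X_R$ because $X_R$ is closed there. Thus $\overline X|_R\cong X_R$ over $R$. However, $\overline X$ may be singular, and its singularities lie only over the finite set $B\setminus R$. One also needs $X_R$ irreducible. If it is not, it is a disjoint union of connected smooth components, and I would treat each component separately and take the disjoint union.

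Third, apply Hironaka's resolution of singularities in its strong form: there is a projective birational morphism $\mu:\mathcal X\to\overline X$ with $\mathcal X$ smooth, which is an isomorphism over the regular locus of $\overline X$. Composites of blowups with smooth centers contained in the singular locus have this property. Since $\overline X_{\mathrm{sing}}\subset \bar p^{-1}(B\setminus R)$, the map $\mu$ is an isomorphism over $\bar p^{-1}(R)=X_R$. Then $\mathcal X$ is projective, being projective over a projective variety, and $p=\bar p\circ\mu$ is projective, with $\mathcal X|_R\cong X_R$.

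The only delicate point is this last requirement, that the resolution not modify anything over $R$. I would cite the standard form of Hironaka's theorem, which says the resolution is an isomorphism over the smooth locus; everything else is routine.
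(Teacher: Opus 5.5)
Your proposal is correct and follows essentially the same route as the paper: you take the Zariski closure of $X_R\subset\mathbb P^N\times R$ in $\mathbb P^N\times B$, note that it restricts to $X_R$ over $R$, and apply Hironaka's resolution, which is an isomorphism over the smooth open set $X_R$. Your extra remarks are harmless refinements of the paper's terser argument; these are the use of affineness of $R$ to get the embedding into $\mathbb P^N_R$, and the treatment of components when $X_R$ is reducible.
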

	
	\begin{proof}
		Choose a smooth projective completion $R\subset B$.  Embed $X_R$ as a
		closed subvariety of $R\times\mathbb P^N$ and let $\overline X_R$ be its
		Zariski closure in $B\times\mathbb P^N$.  Since $X_R$ is closed over $R$,
		the restriction of $\overline X_R$ to $R$ is exactly $X_R$.  Take a projective resolution $r:\mathcal X\to\overline X_R$ which is
		an isomorphism over
		the already smooth open subset $X_R$, using resolution of singularities in
		characteristic zero~\cite{Hironaka64}.  The morphism $p=\operatorname{pr}_B\circ r:\mathcal X\to B$
		has the required properties; no dominance assumption on $p_R$ is
		needed.
	\end{proof}
	
	\begin{proof}[Proof of Theorem~\ref{thm:relative-algebraic-oka1}]
		The analytification $\pi^{\mathrm{an}}$ is a proper smooth submersion and
		hence a Serre fibration by Ehresmann's theorem \cite{Ehr51}.  It remains to
		prove the stronger lifting assertion in the theorem.  Work with the
		algebraic pullback
		\[
		p_R:X_R=R\times_YZ\longrightarrow R.
		\]
		Let $s_f(z)=(z,f(z))$ be the continuous section of $p_R^{\mathrm{an}}$
		associated with the original lifting $f$.
		By Lemma~\ref{lem:alg-projective-model}, choose a smooth projective
		completion $R\subset B$ and a projective model $p:\mathcal X\to B$ with
		smooth total space which equals $X_R\to R$ over $R$.  The morphism $p$ may
		have bad fibers over $B\setminus R$.
		
		We verify the hypothesis on the geometric generic fiber in
		Proposition~\ref{prop:alg-localized-free-section} before applying it.
		Put $\bar\eta=\operatorname{Spec}\overline{\C(R)}$, where
		$\overline{\C(R)}$ is an algebraic closure of the function field of $R$,
		and write $X_{\bar\eta}=X_R\times_R\bar\eta$.
		This fiber is smooth and projective by base change.
		Since the complex fibers are connected,
		$h^0((X_R)_b,\mathcal O_{(X_R)_b})=1$ for every $b\in R(\C)$.
		Upper semicontinuity and compatibility of fiber cohomology with field
		extension give $h^0(X_{\bar\eta},\mathcal O_{X_{\bar\eta}})=1$
		\cite[Tag~0BDN]{Stacks}; nonemptiness and constant functions give the
		lower bound.  Thus $X_{\bar\eta}$ is connected and, being smooth over
		an algebraically closed field, integral.
		
		In relative dimension zero, $X_{\bar\eta}$ is a point and is rationally
		connected.  In positive relative dimension, fix $b\in R(\C)$.
		Holomorphic maps from $\Pone$ to the projective fiber $(X_R)_b$ are
		algebraic \cite[Proposition~15]{Serre56}, so its rational connectedness
		agrees with the algebraic definition.
		Definition--Theorem~2.1(3) of \cite{Kol00} gives a morphism
		$u:\Pone\to(X_R)_b$ with $u^*T_{(X_R)_b}$ ample.
		It is nonconstant, hence very free in the sense of
		\cite[Definition~15]{HT06}; the fiber is therefore separably rationally
		connected.  The restriction of $p_R$ to
		$\operatorname{Spec}\mathcal O_{R,b}$ is a smooth proper local model
		with this special fiber.  Thus $b$ is a place of very good reduction
		in the sense of \cite[Remark~28]{HT06}, which implies that the generic
		fiber is geometrically separably rationally connected over $\C(R)$.
		In particular, $X_{\bar\eta}$ is rationally connected.
		Since $\mathcal X|_R\cong X_R$, this is also the geometric generic
		fiber of $p:\mathcal X\to B$, as required.
		
		Put
		\[
		D=\sum_{a\in A}(k_a+1)a.
		\]
		Since $R$ is noncompact and $K\cup A$ is compact, choose a nonempty
		Euclidean open set $O\Subset R\setminus(K\cup A)$.  Proposition~\ref{prop:alg-localized-free-section}
		gives an algebraic section $h:B\to\mathcal X$ with
		\[
		V=h^*T_{\mathcal X/B}(-D),
		\qquad H^1(B,V)=0.
		\]
		
		Over $R$, the map $p_R$ is a smooth proper bundle with connected simply
		connected fibers.  Lemma~\ref{lem:rel-section-homotopy} gives a homotopy
		$H$ through sections from $h|_R$ to $s_f$.  Choose
		a smooth function $\chi:R\to[0,1]$ which is one near $K\cup A$ and has
		compact support, and put
		\[
		s_0(z)=H(z,\chi(z)).
		\]
		The map $s_0$ is a continuous section of $p_R$, equal to $s_f$ near $K\cup A$ and to
		$h$ outside a compact subset of $R$.  Apply
		the relative smoothing statement in Subsection~\ref{subsec:bundle-tools} to the smooth bundle $p_R:X_R\to R$, relative
		to smaller neighborhoods of $K\cup A$ and of the complement of a large
		compact set.  We obtain a smooth section $s_R:R\to X_R$ equal to $s_f$ near $K\cup A$ and to $h$ near the ends of $R$, homotopic to $s_0$ through sections by a homotopy fixed near $K\cup A$ and near the ends of $R$.  Since $s_R=h$ near the ends of $R$, extending $s_R$ by $h$ gives a
		smooth section $s:B\to\mathcal X$.  The formula
		\[
		H^\chi(z,t)=H(z,t\chi(z))
		\]
		gives a homotopy from $h|_R$ to $s_0$ which is fixed outside a compact
		subset of $R$.  Concatenate $H^\chi$ with the homotopy supplied by relative smoothing from $s_0$ to
		$s_R$, using the first and second halves of the time interval.  Extend
		that concatenation by the constant section $h$ near $B\setminus R$.
		Denote the resulting homotopy from $h$ to $s$ by $H^B$.  The homotopy
		$H^B$ is fixed near $B\setminus R$.  Thus the cutoff produces the
		homotopy on the compact curve; the section-homotopy lemma is applied only
		over the open curve $R$.
		
		Reparametrize $H^B$ to be constant near $t=0,1$.  Regard the restriction of $H^B$
		to $R\times[0,1]$ as a section of the pullback of $p_R$ by
		$R\times[0,1]\to R$. The relative smoothing statement in Subsection~\ref{subsec:bundle-tools}, relative to the
		endpoints and the fixed region near $B\setminus R$, gives a smooth
		homotopy, still denoted $H^B$.  The image of $H^B$ lies in the submersion locus: over $R$ the family is smooth, and near $B\setminus R$ the homotopy equals the
		constant section $h$, along which
		$dp\circ dh=\operatorname{id}_{T_B}$ makes $dp$ surjective even over a bad
		fiber.  Fix a vertical local addition near $s(B)$ and let
		$D_s=d\mathscr S_s(0)$ be its linearized operator on $s^*T_{\mathcal X/B}$.
		Lemma~\ref{lem:alg-homotopy-transport}, followed by tensoring with
		$\mathcal O_B(-D)$, gives a smooth complex bundle isomorphism
		\[
		J_0:h^*T_{\mathcal X/B}(-D)
		\longrightarrow E_{D_s}(-D).
		\]
		$J_0$ supplies the global smooth identification and is the natural
		identity near $B\setminus R$.  Lemma~\ref{lem:alg-natural-Ds} shows that
		this identity is holomorphic there.
		Choose an open subsurface $W\Subset R$ containing $K\cup A$;
		this verifies the open-surface hypothesis of Lemma~\ref{lem:alg-protected-gauge}.
		Apply
		Lemma~\ref{lem:alg-protected-gauge} with
		\[
		P=K\cup A,
		\qquad Q=B\setminus R.
		\]
		We obtain a smooth isomorphism
		\[
		J:V\longrightarrow E_{D_s}(-D)
		\]
		which is holomorphic near $K\cup A$ and agrees with the identity near
		$B\setminus R$.
		
		Choose an open neighborhood $U_h$ of
		$K\cup A\cup(B\setminus R)$ with closure contained in the region where
		both $s$ and $J$ are holomorphic, and put $\Omega=R$.
		The map $p$ is proper, and $p|_R$ has the relative first-jet sphere-family
		property by Proposition~\ref{prop:rel-jet-family}. The image of $s$ lies in the submersion locus, $H^1(B,V)=0$,
		and $s,J$ are holomorphic on $U_h$.
		Thus Theorem~\ref{thm:alg-compact-global} applies.
		To transfer the metric estimate from $\mathcal X$ to $Z$, write $\operatorname{pr}_Z:\mathcal X|_R\cong R\times_YZ\to Z$.
		Choose a compact neighborhood $C_K$ of $s(K)$ in $\mathcal X|_R$.
		First choose $\delta_0>0$ so that the $\delta_0$-neighborhood of $s(K)$
		is contained in $C_K$.  Uniform continuity of $\operatorname{pr}_Z|_{C_K}$
		then gives $0<\delta<\delta_0$ such that
		\[
		z\in K,\quad p(x)=z,\quad d_{\mathcal X}(x,s(z))<\delta
		\quad\Longrightarrow\quad
		d_Z(\operatorname{pr}_Z(x),f(z))<\epsilon.
		\]
		Apply Theorem~\ref{thm:alg-compact-global} with error $\delta$ to obtain a holomorphic section
		\[
		\widetilde F:B\longrightarrow\mathcal X
		\]
		approximating $s$ in $d_{\mathcal X}$ on $K$ and having the prescribed jets
		at $A$.  Since $B$ and $\mathcal X$ are projective, the graph form of
		Chow's theorem gives algebraicity of $\widetilde F$
		\cite[Proposition~15]{Serre56}.
		Restricting to $R$ and projecting by
		$\operatorname{pr}_Z$ gives an algebraic lifting $F:R\to Z$ with the required
		$d_Z$ error.  The relation $s|_R=s_f$ holds near $A$, and holomorphic composition
		preserves jet equality.  Hence
		\[
		j_a^{k_a}(\operatorname{pr}_Z\circ\widetilde F)
		=j_a^{k_a}(\operatorname{pr}_Z\circ s_f)=j_a^{k_a}f
		\qquad(a\in A).
		\]
		
		Finally, the maps
		\[
		z\longmapsto(z,F^{\mathrm{an}}(z)),\qquad z\longmapsto(z,f(z))
		\]
		are sections of $X_R^{\mathrm{an}}\to R^{\mathrm{an}}$.
		The fibers are connected and simply connected, so
		Lemma~\ref{lem:rel-section-homotopy} gives a homotopy of these graph
		sections.  Projection to $Z^{\mathrm{an}}$ gives a homotopy from $f$ to
		$F^{\mathrm{an}}$ through continuous liftings of $g^{\mathrm{an}}$.
	\end{proof}
	\section*{AI Use Disclosure}
	The central ideas, mathematical insights, and overall approach of this
	work originated with the authors. AI tools were used in a supporting role
	to assist in manuscript preparation and revision. The authors take full
	responsibility for the paper's content and the accuracy of its references.

\end{document}